\documentclass[12pt,leqno,a4paper]{amsart}
\usepackage{amssymb}  
\usepackage{amsmath}  
\usepackage{amsthm}  
\usepackage{verbatim} 
\theoremstyle{plain}
\newtheorem{theorem}{THEOREM}[section]
\newtheorem{proposition}{PROPOSITION}[section]
\newtheorem{lemma}{LEMMA}[section]
\newtheorem{corollary}{COROLLARY}[section]

\theoremstyle{definition}
\newtheorem{definition}{DEFINITION}[section]

\newtheorem{remark}{Remark}[section]

\numberwithin{equation}{section}
\newcommand{\ga}{\alpha}


\newcommand{\gd}{\delta}
\newcommand{\gD}{\Delta}
\newcommand{\gep}{\varepsilon}

\newcommand{\gth}{\vartheta}
\newcommand{\gTh}{\Theta}

\newcommand{\gl}{\lambda}
\newcommand{\gL}{\Lambda}
\newcommand{\gm}{\mu}
\newcommand{\gn}{\nu}

\newcommand{\gp}{\pi}

\newcommand{\gr}{\rho}
\newcommand{\gs}{\sigma}

\newcommand{\gf}{\varphi}
\newcommand{\gF}{\varPhi}

\newcommand{\gPs}{\Psi}

\newcommand{\gO}{\Omega}
\newcommand{\ov}{\overline}
\newcommand{\de}{\partial}

\newcommand{\R}{\mathbb {R}}

\newcommand{\N}{\mathbb {N}}
\newcommand{\ldue}{L^2}
\newcommand{\Ldue}{\textbf{L}^2 (\gO)}
\newcommand{\Lp}{\textbf{L}^p (\gO)}
\newcommand{\Linf}{\textbf{L}^{\infty} (\gO)}

\newcommand{\huno}{H_0^1(\Omega \cup \Gamma)}
\newcommand{\Huno}{\textbf{H}_0^1 (\gO)}

\newcommand{\Rm}{\R ^m}
\newcommand{\noi}{\noindent}
\newcommand{\be}{\begin{equation}}
\newcommand{\ee}{\end{equation}}

\begin{document}

\title[Sectional symmetry for systems in cylindrical domains]
{Sectional symmetry of solutions of elliptic  systems in cylindrical domains}

\author[Damascelli]{Lucio Damascelli}
\address{ Dipartimento di Matematica, Universit\`a  di Roma
" Tor Vergata " - Via della Ricerca Scientifica 1 - 00173  Roma - Italy.}
\email{damascel@mat.uniroma2.it}
\author[Pacella]{Filomena Pacella}
\address{Dipartimento di Matematica, Universit\`a di Roma
"Sapienza" -  P.le A. Moro 2 - 00185 Roma - Italy.}
\email{pacella@mat.uniroma1.it}
\date{}
\thanks{Supported by PRIN-2009-WRJ3W7 grant}
\subjclass [2010] {35B06,35B07,35B50,35J66,35P05}
\keywords{ Foliated Schwarz symmetry,
Maximum Principle,  Morse index}
\begin{abstract} In this paper we prove a kind of rotational symmetry for solutions of semilinear elliptic systems in some bounded cylindrical domains.
The symmetry theorems  obtained hold for low-Morse index solutions whenever the  nonlinearities satisfy some convexity assumptions. These results extend and improve those obtained in 
\cite{DaPaSys, DaGlPa1, Pa, PaWe}.
 \end{abstract}

\maketitle

\section{Introduction}
We consider the  Dirichlet problem for a  semilinear elliptic system of the type
 \begin{equation} \label{genprobsys} 
\begin{cases} - \gD U =F(x,U) \quad &\text{in } \gO \\
U= 0 \quad & \text{on } \de  \gO
\end{cases}
\end{equation}

\noi  where  $\Omega $ is a smooth bounded domain  in $\R^N$, $N \geq 2$ 
 and  $F=(f_1 , \dots , f_m)$ is a function belonging to $C^{1 }( \overline {\Omega} \times \Rm; \Rm)$, $m \geq 1$. 
 Here $U=(u_1, \dots ,u_m)$ is a vector valued function. \par
When  $m=1$, i.e.   the  equation  in \eqref{genprobsys}  is a scalar semilinear elliptic equation, the famous symmetry result by  B. Gidas, W.M. Ni and L. Nirenberg  \cite{GiNiNi}, based on the moving planes method,  asserts that if $\Omega $ is a ball then every positive solution of \eqref{genprobsys} is radial if the nonlinear term $f=f(|x|,u)$ is monotone decreasing with respect to  $r=|x|$.
The result of \cite{GiNiNi} was then extended to systems in \cite{Tr}, \cite{deF1}, \cite{deF2}. \par
 It is well known that the radial symmetry of a solution does not hold, in general, when $\Omega $ is an annulus or if sign changing solutions are considered and even if $f$ does not have the right monotonicity with respect to $|x|$ (see for example \cite{PaRa}).
Nevertheless when the hypotheses of the theorem of Gidas, Ni and Nirenberg fail another kind of symmetry can be recovered, namely the foliated Schwarz symmetry for solutions of \eqref{genprobsys}  in a  ball or in annulus having low Morse index and assuming that the nonlinear term has some convexity properties in the $U$-variable. We refer to Section 2 for the definition of Morse index. \par
A symmetry result of this type was first proved in \cite{Pa} in the case $m=1$  for solutions having Morse index one and assuming that the nonlinearity $f=f(|x|,s)$ is convex in the second variable. Later it was extended in \cite{PaWe} to solutions having Morse index not larger than the dimension $N$ and assuming that the derivative $\frac {\partial f } {\partial s}$ is a convex function in the $s$-variable.
Finally in \cite{DaPaSys} and \cite{DaGlPa1} the foliated Schwarz symmetry  was proved for low Morse index solutions of cooperative elliptic systems, i.e. when $m \geq 2$.
Let us point out that the extension of the results in \cite{Pa} and \cite{PaWe} to systems is nontrivial. Indeed the results of   \cite{DaPaSys} could not be proved for any convex nonlinearity $F=F(|x|,U) $ but  some additional hypotheses were required. \par
In this paper we extend the above results by considering more general symmetric domains and not just balls or annulus. As a consequence we will get less symmetry of the solutions, depending also on a tighter bound on their Morse index. Moreover we are able to improve the results in \cite{DaPaSys}  by allowing any convex nonlinearity in \eqref{genprobsys}. \par
To state precisely our results we need some preliminary definitions. The first one concerns the domains we consider.

  Let  $N \geq 2 $  and $2 \leq k \leq N$.  If $k <N$, let us denote  by $x=(x', x'' )$ a point in $ \R^N$, with $x' \in \R^k$, $x'' \in \R^{N-k}$, 
     and for a bounded domain $\Omega $ let us denote  by $\Omega ''  $  the set 
 $$
\Omega '' = \{ x'' \in \R^{N-k}: \exists \; x' \in \R^k : (x', x'') \in \Omega  \}
$$
   We will consider domains of the following type.
\begin{definition}\label{DefDominiSimmetrici} Assume that $N \geq 2$, $2 \leq k \leq N$.
We say that a bounded domain $\Omega $ in $\R^N$, is  $k$-rotationally symmetric if either $k=N $ and $\Omega $ is a ball or an annulus, or $2 \leq k<N$ and the sets 
 $$\Omega ^{h}= \Omega   \cap \{ x=(x',x'') \in \R^N : x''=h \} 
 $$
 are   either  $k$-dimensional balls or    $k$-dimensional annulus  with the center on  $( 0 ,  h )$
  for every  $ h \in \Omega ''$.
 \end{definition} 
 In other words we require that the set $\Omega ^h$, which represents a section of $\Omega  $ at the level $x''=h $ is either a ball or an annulus in dimension $k$. \par
   The symmetry we will get for solutions of \eqref{genprobsys}   when $\Omega $ is   $k$-rotationally symmetric   is a variant of the  foliated Schwarz symmetry considered in several previous paper
   (see    \cite{BaWi},  \cite{DaPaSys},  \cite{DaGlPa1},  \cite{DaGlPa2},  \cite{GlPaWe},  \cite{Pa},   \cite{PaWe}, \cite{SmWi}, \cite{WethSurvey} and the references therein). \par
 We will call it $k$-sectional foliated Schwarz symmetry. 

 \begin{definition}\label{sectionfoliatedSS-Sys}
 Let $\Omega $ be a bounded  $k$-rotationally symmetric domain   in $\R^N$,   $2 \leq k \leq N$, and let $U: \Omega \to \R ^m $ a continuous function.
 We say that $U$ is \emph{$k$-sectionally foliated Schwarz symmetric} if there exists a vector $p'= (p_1, \dots , p_{k}, 0, \dots ,0) \in \R^N$,  $|p'|=1$, such that 
 $U(x)=U(x', x'' )$ depends only on $x''$, $r= |x'|$ and $\gth = \arccos (\frac {x'}{|x'|} \cdot {p'})$ and $U$ is nonincreasing in $\gth$.
 \end{definition}
 \par
 \smallskip
 When $k=N$ the previous definition coincides with that of foliated Schwarz symmetry. 
Definition \ref{sectionfoliatedSS-Sys} just means that the functions $x' \mapsto U(x', h)$ defined in $\Omega ^h$ are either radial for any $h \in \Omega '' $, or nonradial but foliated Schwarz symmetric for any 
 $h \in \Omega '' $, with the same axis of symmetry. In the case $k=N-1$ the sectional  foliated Schwarz symmetry was defined in \cite{DaPaMixed} to study some elliptic problems with nonlinear mixed boundary conditions.
 \par
 In order to prove symmetry of solutions we also need some symmetry on the nonlinearity. Therefore from now on we assume that $\Omega $ is a smooth bounded  $k$-rotationally symmetric   domain  in $\R^N$ and we rewrite the system  \eqref{genprobsys} as 
 \begin{equation} \label{modprob} 
\begin{cases} - \gD U =F(|x'|, x'',U) \quad &\text{in } \gO \\
U= 0 \quad & \text{on } \de  \gO
\end{cases}
\end{equation}
i.e. we require that $F$ depends radially on $x'$. As in \eqref{genprobsys}  
$F=F(r,x'',S)=(f_1 (r,x'',S), \dots , f_m(r,x'',S) )$ satisfies
\be \label{ConditionsOnF}
F \in C^{1 }([0,+\infty )\times \Omega '' \times \R^m; \R^m)
\ee
\par
\smallskip

 The symmetry results we get are the following (the definition of Morse index and fully coupled  systems will be  recalled in Section 2).
 
 \begin{theorem} \label{fconvessaSistemi} Let $\gO$ be a $k$-rotationally symmetric  domain in $\R^N$, $2 \leq k \leq N  $, and let $U \in C^{2 }(\ov {\gO} ; \R^m)$ be a solution of \eqref{modprob} with $F$ satisfying \eqref{ConditionsOnF}. Assume that  
 \begin{itemize} 
\item [i) ]  the system \eqref{modprob}  is fully coupled along $U$ in $\gO$
 \item [ii) ] for any $i=1, \dots m $    the scalar function $f_i  (|x'|, x'',S) $ is convex in the variable
  $S=(s_1, \dots , s_m) \in \R^m $. 
  \end{itemize}
If   \  $m (U) \leq k $, where  $m (U) $ is the Morse index of $U$, then
 $U$ is  $k$-sectional foliated Schwarz symmetric, and  if the functions  $x' \mapsto U(x', h)$, $h \in \Omega ''$,  are not radial then they are strictly decreasing in the angular variable.
\end{theorem}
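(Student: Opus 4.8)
The plan is to adapt the now-standard rotating-plane/rotating-hyperplane method (as used in \cite{Pa}, \cite{PaWe}, \cite{DaPaSys}) to the present setting, working only with rotations of the $x'$-variable around the $x''$-axis, so that the whole analysis happens fiberwise over $\Omega''$ but with a common symmetry axis. First I would fix a unit vector $e\in\R^k\times\{0\}$ and, for each direction, consider the hyperplane $T_e=\{x\cdot e=0\}$ through the $x''$-axis, the two caps $\Omega_e^{+},\Omega_e^{-}$ it cuts $\Omega$ into, and the reflection $\sigma_e$ with respect to $T_e$ (which maps $\Omega$ onto itself because of $k$-rotational symmetry). For a solution $U$ set $W_e=U-U\circ\sigma_e$ on $\Omega_e^{+}$. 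Using hypothesis ii), convexity of each $f_i$ in $S$ gives, componentwise,
\[
-\gD (w_e)_i \;=\; f_i(|x'|,x'',U)-f_i(|x'|,x'',U\circ\sigma_e)\;\ge\;\sum_{j=1}^m \frac{\partial f_i}{\partial s_j}(|x'|,x'',U\circ\sigma_e)\,(w_e)_j ,
\]
so $W_e$ is a supersolution of a linear cooperative system whose off-diagonal coefficients are nonnegative along $U\circ\sigma_e$ by the fully-coupled assumption i); and $W_e=0$ on $\de\Omega_e^+$. The aim is to show that for a suitable $e$ one has $W_e\equiv 0$, which forces $U$ to be symmetric with respect to $T_e$, and monotonicity in $\gth$ then comes from the strong maximum principle / Hopf lemma applied to $(w_e)_i$ unless all components are identically zero on the cap.

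The Morse-index input enters exactly as in the scalar and cooperative-system papers: to run the argument one needs, for the chosen $e$, that $W_e$ does not change sign on $\Omega_e^+$, and this is deduced from the hypothesis $m(U)\le k$. Concretely, the antisymmetric functions $\partial_\theta U$ associated with the $k$ independent infinitesimal rotations in the $x'$-plane span a $k$-dimensional subspace on which the quadratic form of the linearized operator $L_U=-\gD-DF(|x'|,x'',U)$ is controlled; if $m(U)\le k$ then on at least one such rotation direction (equivalently, for at least one hyperplane $T_e$ after a suitable choice, obtained by a topological/linear-algebra argument — a continuity or intermediate-value argument on the sphere of directions, exactly as in \cite{PaWe,DaPaSys}) the second variation is $\ge 0$, and this nonnegativity, combined with the fact that $W_e$ is a supersolution vanishing on the boundary of the cap, forces $W_e$ to have a sign. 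Once $W_e$ has a sign, the cooperative maximum principle for the (possibly sign-indefinite but fully coupled) linear system — available because the caps can be taken to have small measure near the critical position, or via the standard variational characterization — upgrades "$W_e$ has a sign" to "$W_e\equiv 0$" or to strict monotonicity.

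The structure of the proof I would write is therefore: (1) set up the rotation group $SO(k)$ acting on the $x'$ variable, the hyperplanes $T_e$, reflections $\sigma_e$, caps, and the functions $W_e$; (2) record that $W_e$ solves a cooperative linear differential inequality with the right sign structure, using i) and ii); (3) prove the "at least one good hyperplane" statement: using $m(U)\le k$, there exists $e_0$ such that either $W_{e_0}\ge 0$ or $W_{e_0}\le 0$ on $\Omega_{e_0}^+$ — this is the quantitative use of the Morse index and is where one invokes the abstract lemma (to be stated in Section 2) relating low Morse index to nonnegativity of the second variation on the space of rotational directions; (4) apply the strong maximum principle for fully coupled cooperative systems to conclude $W_{e_0}\equiv 0$, hence $U$ is symmetric about $T_{e_0}$; (5) a monotonicity/symmetrization step: combine symmetry about $T_{e_0}$ with the sign of $W_e$ for nearby $e$ to deduce that $U$ depends only on $x''$, $r=|x'|$, and $\gth=\arccos(\frac{x'}{|x'|}\cdot p')$ with $p'\perp T_{e_0}$ and is nonincreasing in $\gth$; (6) if the slices $x'\mapsto U(x',h)$ are nonradial, run Hopf's lemma on $(w_e)_i$ at $T_{e_0}$ to get strict monotonicity.

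The main obstacle I expect is step (3)–(4): because $\Omega$ is only $k$-rotationally symmetric and the solutions may be sign-changing, the linear system satisfied by $W_e$ need not have a zeroth-order term of a favorable sign, so the maximum principle is not automatic; one must either exploit the smallness of the caps in a limiting position (which requires care since the "critical cap" is not unique and the geometry varies with $h\in\Omega''$) or push through the variational comparison between the Morse index and the number of independent antisymmetric rotational test functions. Establishing that $m(U)\le k$ really yields a single hyperplane that works simultaneously for all fibers $h$ — rather than a different one for each $h$ — is the delicate point, and I anticipate it is handled exactly as in \cite{DaPaSys,DaGlPa1} by noting that the rotational vector fields do not depend on $h$, so the $k$-dimensional space of test functions is global, and the index bound is global.
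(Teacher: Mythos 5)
Your overall strategy (rotating hyperplanes through the $x''$-axis plus a Borsuk–Ulam-type choice of a good direction) matches the paper in spirit, but several of the key steps as you have written them would not close, and the gaps are precisely in the places where the paper's lemmas do nontrivial work.

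First, the convexity inequality in step (2) is pointed the wrong way. You expand around $U\circ\sigma_e$ and obtain
\[
-\Delta (w_e)_i \ge \sum_j \tfrac{\partial f_i}{\partial s_j}(|x'|,x'',U\circ\sigma_e)\,(w_e)_j,
\]
so that the coefficient matrix of the resulting cooperative inequality is $J_F(\cdot,U^{\sigma(e)})$. The Morse index hypothesis, however, controls the quadratic form $Q_U$ built from $J_F(\cdot,U)$, not from $J_F(\cdot,U^{\sigma(e)})$. The paper's Lemma on this point expands around $U$, i.e.\ uses $f_i(U)-f_i(U^{\sigma(e)}) \le \nabla_S f_i(\cdot,U)\cdot W^e$, tests componentwise against $w_i^+$ and discards the cross terms $w_j^- w_i^+$ by cooperativity. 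The payoff is exactly $Q_U\bigl((W^e)^+;\Omega(e)\bigr)\le 0$, where $Q_U$ is the Morse-index quadratic form; this identity of quadratic forms is what makes the Morse index input usable. With your version of the inequality, that bridge is missing, and a maximum-principle argument on a system whose zeroth-order matrix is $J_F(\cdot,U^{\sigma(e)})$ does not see the Morse index of $U$.

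Second, step (3) as stated is not the paper's mechanism and does not quite make sense as a count. The angular derivatives $U_\theta$ come from infinitesimal rotations of $\R^k$, of which there are $\binom{k}{2}$, not $k$; and they all lie in the kernel of $L_U$, so they give information about $\lambda_2$ of $L_U$, not directly about the caps. The paper's argument is different: it fixes a solution with $m(U)=j\le k$, takes the first $j$ eigenfunctions $\Phi_1,\dots,\Phi_j$ of $L_U$ on $\Omega$, and for each direction $e\in S^{k-1}$ constructs the odd function $\Psi^e$ from suitably normalized first eigenfunctions $\Phi_1^{\pm e}$ of $L_U$ in the two caps $\Omega(\pm e)$, choosing the ratio so that $(\Psi^e,\Phi_1)_{L^2}=0$. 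The map $e\mapsto\bigl((\Psi^e,\Phi_2),\dots,(\Psi^e,\Phi_j)\bigr)$ is odd and continuous from $S^{k-1}$ to $\R^{j-1}$, and since $j-1<k$ Borsuk–Ulam yields a direction with $Q_U(\Psi^e;\Omega)\ge 0$, hence $\lambda_1^{(\mathrm{s})}(\Omega(e))\ge 0$ (or $\ge 0$ on $\Omega(-e)$). This is a genuinely different construction from what you describe, and is the step where the dimension count $m(U)\le k$ matches $S^{k-1}$.

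Third, step (4) overstates what the strong maximum principle gives. Combining the two lemmas yields $Q_U\bigl((W^e)^\pm;\Omega(e)\bigr)\le 0 \le Q_U(\Psi;\Omega(e))$, so either $W^e\equiv 0$ in the cap, or $\lambda_1^{(\mathrm{s})}(\Omega(e))=0$ and $(W^e)^+$ (or $(W^e)^-$) is a signed first eigenfunction. In the second case $U\equiv U^{\sigma(e)}$ need not hold; instead one obtains $U< U^{\sigma(e)}$ (or $>$) throughout $\Omega(e)$, and the rotating-plane theorem (the version in the paper for fully coupled systems, using the small-measure weak comparison principle and the strong comparison principle) must then be run to reach a symmetry hyperplane. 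Only after that, via the sufficient-conditions proposition involving the principal eigenvalue $\tilde\lambda_1$ of the nonsymmetric linearized operator, is the $k$-sectional foliated Schwarz symmetry deduced. The distinction between the symmetric eigenvalue $\lambda_1^{(\mathrm{s})}$ (which the Morse index controls) and the principal eigenvalue $\tilde\lambda_1$ (which governs the maximum principle and the monotonicity argument), and the inequality $\tilde\lambda_1\ge\lambda_1^{(\mathrm{s})}$, is an essential ingredient that your proposal does not mention.
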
 
 
 This theorem not only extends the results in \cite{DaPaSys} to $k$-rotationally symmetric  domains but also improves the result of  \cite{DaPaSys} for the case $k=N$ since it only requires that the components $f_i$ of the nonlinearity are convex without further assumptions.
  \par
  The next theorem concerns the case  when the nonlinearity has convex first derivatives.
   
 \begin{theorem} \label{f'convessaSistemi} Let $\gO$ be a $k$-rotationally symmetric in $\R^N$, $2  \leq k \leq N  $, and let $U \in C^{2 }(\ov {\gO}; \R^m)$ be a solution of \eqref{modprob}.
  Assume that:
\begin{itemize} 
\item [i) ]  the system  \eqref{modprob}  is fully coupled along $U$ in $\gO $  
\item [ii) ] for any $i,j=1, \dots m $  the function   $\frac {\de f_i} {\de s_j}(|x'|, x'',S)$ is convex in $S=(s_1, \dots , s_m)$.
  \end{itemize}
 If $m (U) \leq k-1 $ then a solution  $U$ is  $k$-sectionally foliated Schwarz symmetric and  if the functions  $x' \mapsto U(x', h)$, $h \in \Omega ''$,  are not radial then they are strictly decreasing in the angular variable.\end{theorem}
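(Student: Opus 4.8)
The plan is to follow the scheme of \cite{Pa,PaWe,DaPaSys,DaGlPa1,BaWi} used for Theorem \ref{fconvessaSistemi}, with the convexity of the derivatives $\de f_i/\de s_j$ playing the role that the convexity of the $f_i$ plays there; this substitution is exactly what lowers the admissible Morse index from $k$ to $k-1$. First I would set up the reflected slices. Let $G\subset O(k)$ act on $\R^N$ fixing the $x''$-variables; since $\gO$ is $k$-rotationally symmetric it is $G$-invariant, and so is the right-hand side of \eqref{modprob} because $F$ depends on $x'$ only through $|x'|$. For a unit vector $e=(e_1,\dots,e_k,0,\dots,0)$ put $H(e)=\{x:x'\cdot e=0\}$, let $\gs_e$ be the reflection across $H(e)$, and let $\gS(e)=\{x\in\gO:x'\cdot e>0\}$ be the corresponding cap. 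The function $w_e:=U-U\circ\gs_e$ vanishes on $\de\gS(e)$ (on $\de\gO$ because $\gs_e$ preserves $\de\gO$ and $U=0$ there, on $H(e)$ because that is the fixed set of $\gs_e$) and, by the mean value theorem applied componentwise, solves in $\gS(e)$ the linear system $-\gD w_e=C_e(x)\,w_e$ with $C_e(x)=\int_0^1 J_F\big(|x'|,x'',U(\gs_e x)+t\,w_e(x)\big)\,dt$, $J_F=(\de f_i/\de s_j)$. Hypothesis i) makes $C_e$ cooperative and irreducible, so the strong maximum principle for systems yields the trichotomy: for each $e$, either $w_e\equiv0$ (so $H(e)$ is a symmetry hyperplane of $U$), or $w_e$ has all components of one strict sign in $\gS(e)$ (and is then the principal eigenfunction of $-\gD-C_e$ on $\gS(e)$ with eigenvalue $0$), or $w_e$ is sign-changing in $\gS(e)$ (and then the principal eigenvalue $\gm_1(e)$ of that operator is strictly negative).

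The only place hypothesis ii) enters is the following comparison. For $x\in\gS(e)$, writing $U(x)=U(\gs_ex)+w_e(x)$ and using convexity of each $\de f_i/\de s_j$ along the segment $[U(\gs_ex),U(x)]$ gives the entrywise bound $C_e(x)\le\tfrac12\big(J_F(\,\cdot\,,U(\gs_ex))+J_F(\,\cdot\,,U(x))\big)$. For any $V\in\Huno$ that is $\gs_e$-antisymmetric and whose components at each point of $\gS(e)$ all have the same sign — so that $V^iV^j\ge0$ for all $i,j$ there — this entrywise bound upgrades to $\langle C_e(x)V(x),V(x)\rangle\le\tfrac12\langle(J_F(\cdot,U(\gs_ex))+J_F(\cdot,U(x)))V(x),V(x)\rangle$, and integrating over $\gO=\gS(e)\cup(\gO\cap H(e))\cup\gS(-e)$ with the obvious change of variables gives
\[
Q_U(V)\ \le\ 2\Big(\int_{\gS(e)}|\nabla V|^2-\langle C_e V,V\rangle\Big),
\]
where $Q_U$ is the quadratic form of the linearized operator $L_U=-\gD-J_F(\,\cdot\,,U)$, whose negative inertia is $m(U)$. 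Taking $V$ to be the $\gs_e$-odd extension of the (componentwise positive) principal eigenfunction of $-\gD-C_e$ on $\gS(e)$ one concludes: whenever $w_e$ is sign-changing, $L_U$ admits a $\gs_e$-antisymmetric negative direction, essentially supported on the two antipodal caps $\gS(\pm e)$.

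Then comes the Morse-index count. Suppose $U$ is not $k$-sectionally foliated Schwarz symmetric. By a rotating-hyperplane argument — continuity of $e\mapsto w_e$ together with the trichotomy above, as in \cite{BaWi,PaWe,DaPaSys,DaGlPa1} — the antipodally symmetric set $\mathcal N=\{e\in S^{k-1}:w_e\text{ is sign-changing}\}$ cannot be contained in a great subsphere, so it spans $\R^k$; one also has the $\binom{k}{2}$ rotational derivatives $X_{ij}U$ of $U$, which lie in $\ker L_U$ (differentiate \eqref{modprob} along the generators of $G$) and are sign-changing as soon as $U$ depends on the relevant angle. Combining, for suitably chosen $e\in\mathcal N$, the antisymmetric negative directions of the previous step (using that reflections in antipodal hyperplanes have disjoint caps) with the kernel elements $X_{ij}U$ split along their nodal domains, one builds a $k$-dimensional subspace of $\Huno$ on which $Q_U$ is negative definite, i.e.\ $m(U)\ge k$, contradicting $m(U)\le k-1$. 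Hence $\mathcal N$ lies in a great subsphere, the positivity set of the $w_e$ contains a closed hemisphere, the rotating-hyperplane lemma forces the $k$-sectional foliated Schwarz symmetry of Definition \ref{sectionfoliatedSS-Sys}, and on each slice $\{x''=h\}$ on which $U(\cdot,h)$ is nonradial the strong maximum principle and Hopf's boundary lemma give $w_e>0$ on $\gS(e)\cap\{x''=h\}$ for the relevant $e$, hence strict monotonicity of $x'\mapsto U(x',h)$ in the angular variable $\gth$.

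The hard part will be the dimension count in the last paragraph: upgrading finitely many $\gs_e$-antisymmetric negative directions attached to \emph{overlapping} caps, together with the kernel elements $X_{ij}U$, to a genuinely $k$-dimensional negative subspace of $Q_U$ — and doing so in the vector-valued setting, where the quadratic-form comparison of the second paragraph is available only on test functions with one-signed components, so that the cooperativity and full coupling in hypothesis i) must be invoked precisely to guarantee that the eigenfunctions entering the construction have that sign property.
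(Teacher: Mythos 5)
Your setup and the convexity step are right, and they coincide with what the paper does: the entrywise bound $C_e(x)\le\tfrac12\big(J_F(\cdot,U(\gs_ex))+J_F(\cdot,U(x))\big)$ coming from the convexity of each $\de f_i/\de s_j$ is exactly the paper's inequality \eqref{CfrCoefficienti} between $B^e$ and $B^{e,s}$, and the observation that this upgrades to a quadratic-form bound only when tested against vectors with one-signed components is the same point the paper makes by working with $(W^e)^\pm$. But your third paragraph is where the proof actually has to happen, and there it breaks.

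The paper does not argue by contradiction via a ``negative-subspace count over $\mathcal N$'' at all. It applies Borsuk--Ulam directly: for each $e\in S^{k-1}$ one takes the odd extension $\Phi^{e,s}$ of the first (positive, $L^2$-normalized) eigenfunction of $-\gD+B^{e,s}$ on $\gO(e)$, and maps $e\mapsto\big((\Phi^{e,s},\Phi_1),\dots,(\Phi^{e,s},\Phi_j)\big)\in\R^j$ where $\Phi_1,\dots,\Phi_j$ are the negative eigenfunctions of $L_U$ and $j=m(U)\le k-1$. This map is odd and continuous from $S^{k-1}$ into $\R^{j}$ with $j\le k-1$, so it vanishes at some $e$, and at that $e$ the oddness of $\Phi^{e,s}$ gives $0\le Q_U(\Phi^{e,s};\gO)=Q^{e,s}(\Phi^{e,s};\gO)=2\gl_1^{\text{s}}(L^{e,s},\gO(e))$. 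Feeding this into the $Q^{e,s}((W^e)^\pm;\gO(e))\le 0$ comparison yields the alternatives ($W^e\equiv0$ with nonnegative principal eigenvalue, or $W^e$ of one sign), and Theorem \ref{SuffCondSectFSS2Sistemi} finishes. Note the crucial point: the eigenfunction whose odd extension is used is that of the $e$-dependent, $\gs_e$-symmetric operator $L^{e,s}$, not of $L_U$ or of $-\gD-C_e$; it is precisely because $B^{e,s}$ is reflection-symmetric that $Q_U(\Phi^{e,s})=Q^{e,s}(\Phi^{e,s})$, which is what makes the argument close.

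Your version instead tries to show that non-symmetry forces $m(U)\ge k$ by assembling a $k$-dimensional negative subspace from (a) antisymmetric negative directions attached to the set $\mathcal N$ of directions $e$ with sign-changing $w_e$, together with (b) the rotational derivatives $X_{ij}U$. Two concrete problems. First, the claim that ``$U$ not $k$-sectionally FSS $\Rightarrow \mathcal N$ is not contained in a great subsphere'' is asserted but not justified, and is not obviously true: failure of FSS does not by itself force sign-change of $w_e$ in a spanning family of directions (indeed the paper's Proposition \ref{SuffCondSectFSS1Sistemi} shows that the relevant dichotomy is whether $w_e$ changes sign for \emph{some} $e$, and the Borsuk--Ulam step is used to exclude that for at least one $e$, not to quantify how many there are). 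Second, as you yourself note, the caps $\gO(e)$ for distinct $e\in\mathcal N$ overlap, so the antisymmetric negative directions do not have disjoint supports, and you give no mechanism to extract a $k$-dimensional subspace on which $Q_U$ is negative definite; in the vectorial setting the quadratic-form comparison is only available on one-signed test functions, which makes free linear combinations across overlapping caps even less controllable. The kernel elements $X_{ij}U$ do not obviously help either, since being in the kernel they contribute zero, not negative, energy, and splitting them along nodal domains changes their membership in $\ker L_U$. This is a genuine gap, not a routine detail; the Borsuk--Ulam construction the paper uses sidesteps it entirely and is the missing idea.
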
 
 
 The previous theorem extends to $k$-rotationally symmetric domains the result in \cite{DaGlPa1} and we provide a different proof which also simplify the one given in  \cite{DaGlPa1}.  Note that in Theorem \ref{f'convessaSistemi} the bound on the Morse index $m(U) \leq k-1$ is stricter than in Theorem \ref{fconvessaSistemi}. When $m=1$, i.e. in the scalar case, it is possible to improve it to $m (U) \leq k$ (adapting the proof  in \cite{PaWe} for $k=N$). However in the vectorial case serious difficulties arise when $m(U)=k$, which prevent to use the same approach, though we believe that the symmetry result should be true also  in this case.\par
 It is interesting to see in the previous theorems how the Morse index of a solution is related to the ''dimension''  of the sectional symmetry of the domain.

 \begin{remark} \label{SoluzioniStabili}
  In the particular case of stable solutions (see Section 2 for the definition) we get the radial symmetry on each section $\Omega ^h $ without requiring any convexity on $F$. This can be proved easily as in the proof of Theorem 1.5 of \cite{GlPaWe}.
  \end{remark}
      We will deduce from  the proof of Theorem \ref{fconvessaSistemi} and Theorem \ref{f'convessaSistemi}  that for nonradial Morse index one solutions the following condition holds.
 
 \begin{corollary} \label{corollario1}
 Under the assumptions of Theorem \ref{fconvessaSistemi} or Theorem \ref{f'convessaSistemi}  if a solution $U$  has Morse index one and  is not radial then  
\be \label{superfullycoupling1}
 \sum_{j=1}^m  \frac {\de f _i}{\de s _j}(r,x'', U(r, \gth)) \frac {\de u_j } {\de \gth  }(r,x'', \gth )=
 \sum_{j=1}^m  \frac {\de f _j}{\de s _i}(r,x'', U(r, \gth))   \frac {\de u_j } {\de \gth  }(r,x'', \gth )
\ee
 for any    $i=1,\dots ,m $, with $(r, \gth)$ as in Definition \ref{sectionfoliatedSS-Sys}. \\ 
   In particular  if $m=2$ then \eqref{superfullycoupling1} implies that 
 \be \label{superfullycoupling2} 
 \frac {\de f _1}{\de s _2}(|x'|,x'', U(x))=  \frac {\de f _2}{\de s _1}(|x'|,x'',  U(x)) \; , \quad \forall \, x \in \gO \;
 \ee
  \end{corollary}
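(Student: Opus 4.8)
The plan is to realize the angular derivative of $U$ as an element of the kernel of the linearized system and to pin down its position relative to the (one‑dimensional) negative subspace of the linearized operator by means of a reflection symmetry. First I would invoke Theorem \ref{fconvessaSistemi} (or \ref{f'convessaSistemi}) to fix, after a rotation, the axis $p'=e_1$ of the $k$-sectional foliated Schwarz symmetry of $U$, so that $U=U(|x'|,x'',\gth)$ with $\gth=\arccos(x_1/|x'|)$ and $U$ strictly decreasing in $\gth$ on the nonradial sections. Since $U$ is not radial, Remark \ref{SoluzioniStabili} rules out $m(U)=0$, so $m(U)=1$ means the following: writing $C(x)=\big(\frac{\de f_i}{\de s_j}(|x'|,x'',U(x))\big)_{i,j}$ and $C_{\mathrm{sym}}=\frac12(C+C^T)$, the self-adjoint operator $-\gD-C_{\mathrm{sym}}(x)$ on $\Huno$, whose negative eigenspace computes $m(U)$ through the quadratic form $Q(V)=\int_\gO|\nabla V|^2-\int_\gO V^TC(x)V$, has exactly one negative eigenvalue $\gm_1<0$; because the system is fully coupled along $U$, this operator is cooperative and irreducible, hence $\gm_1$ is simple with a componentwise positive eigenfunction $\gF_1>0$, and the negative subspace of $Q$ is $\mathrm{span}(\gF_1)$.

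Next I would set $W=\de_\gf U$, where $\gf$ denotes the infinitesimal rotation in the $(e_1,e_2)$-plane of the $x'$ variables. Differentiating \eqref{modprob} and using that $\gO$, $|x'|$ and $x''$ are invariant under this rotation, $W\in C^1(\ov\gO;\R^m)\cap\Huno$ solves the linearized system $-\gD W=C(x)W$ in $\gO$, $W=0$ on $\de\gO$, and $W\not\equiv0$ since $U$ depends nontrivially on $\gth$. Two observations then drive the proof. (a) Testing $-\gD W=C(x)W$ with $W$ gives $\int_\gO|\nabla W|^2=\int_\gO W^TC(x)W$, so $Q(W)=0$. (b) Let $\gs$ be the reflection in the hyperplane $\{x_2=0\}$; then $\gs(\gO)=\gO$ and $U\circ\gs=U$ (a hyperplane through the symmetry axis leaves a foliated Schwarz symmetric function fixed), so $C\circ\gs=C$, while $\gs$ conjugates $\gf$ to $-\gf$ and hence $W\circ\gs=-W$. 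Since $-\gD-C_{\mathrm{sym}}$ commutes with $\gs$ and $\gm_1$ is simple, $\gF_1\circ\gs=\gF_1$ (the sign is $+$ as $\gF_1>0$), so $W^T\gF_1$ is $\gs$-odd and $\int_\gO W^T\gF_1\,dx=0$, i.e. $W\perp\gF_1$ in $\ldue$.

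Thus $W$ lies in the $\ldue$-orthogonal complement of $\gF_1$, on which $Q$ is positive semidefinite, and $Q(W)=0$; hence $W$ is a minimizer, and therefore a critical point, of $Q$ restricted to that subspace, so the associated bilinear form $B(W,\psi)=\int_\gO\nabla W\cdot\nabla\psi-\int_\gO\psi^TC_{\mathrm{sym}}(x)W$ vanishes for every $\psi\perp\gF_1$; since also $B(W,\gF_1)=\gm_1\int_\gO W^T\gF_1\,dx=0$, we get $B(W,\cdot)\equiv0$ on $\Huno$. Testing against $C^\infty_c$ functions this means $-\gD W=C_{\mathrm{sym}}(x)W$, which combined with $-\gD W=C(x)W$ yields $(C(x)-C(x)^T)W(x)=0$ for every $x\in\gO$. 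Written componentwise, and using that on the $2$-plane $\{x_3=\dots=x_k=0,\ x_2>0\}$ the rotation derivative $\de_\gf$ coincides with $\de_\gth$ (and that both sides depend only on $(r,x'',\gth)$), this is precisely \eqref{superfullycoupling1}. When $m=2$ the matrix $C-C^T$ has the single independent entry $\frac{\de f_1}{\de s_2}-\frac{\de f_2}{\de s_1}$, so $(C-C^T)W=0$ forces this entry to vanish wherever $W\neq0$; since $U$ is strictly decreasing in $\gth$ on the nonradial sections, $W\neq0$ on a dense open subset of $\gO$, and \eqref{superfullycoupling2} follows by continuity of $\frac{\de f_i}{\de s_j}$ and $U$.

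I expect the only real obstacle to be the mismatch between the full coupling matrix $C$ and its symmetrization: the Morse index and the form $Q$ only involve $C_{\mathrm{sym}}$, whereas a priori $W=\de_\gf U$ solves only the equation with the full $C$. The heart of the argument is exactly the upgrade from ``$Q(W)=0$ and $W\perp\gF_1$'' to ``$W$ is a genuine kernel element of $-\gD-C_{\mathrm{sym}}$'', which is what produces $(C-C^T)W=0$; the remaining points — checking $\gs(\gO)=\gO$, $U\circ\gs=U$, $W\circ\gs=-W$, and deducing simplicity and positivity of $\gF_1$ from the full-coupling hypothesis — are the standard supporting facts, and for $m=2$ one needs in addition only that the angular derivative does not vanish identically on sections.
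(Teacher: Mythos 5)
Your proof is correct and takes a genuinely different route from the paper's. The paper works entirely on the half-domain $\gO(e)$ and leans on the principal-eigenvalue machinery: after locating a symmetry hyperplane $H(e)$ with $\tilde\gl_1(\gO(e))\ge\gl_1^{\mathrm s}(\gO(e))\ge0$, it rules out $\tilde\gl_1>0$ (else $U_\theta\equiv0$), concludes $\tilde\gl_1(\gO(e))=\gl_1^{\mathrm s}(\gO(e))=0$, and invokes part iv) of Theorem \ref{principaleigenvalue} — the nontrivial statement that equality of the principal and first symmetric eigenvalues forces the principal eigenfunction to coincide with the first symmetric eigenfunction and yields $C(x)\Psi_1=D(x)\Psi_1$. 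Your argument avoids the principal eigenvalue entirely: you stay on the whole domain, extract the $L^2$-orthogonality $W\perp\gF_1$ from the odd/even parity under the reflection $\gs$, and then run the standard ``$Q(W)=0$ on $\gF_1^{\perp}$ implies $W$ is a kernel vector of $-\gD-C_{\mathrm{sym}}$'' variational argument. That is a cleaner mechanism, using only the variational characterisation of eigenvalues (Theorem \ref{Xvarformautov}) plus simplicity and positivity of $\gF_1$ (part iv) thereof); what you lose is that the paper's route delivers for free that $U_\theta$ is a (strictly signed) first eigenfunction in the cap, hence nonvanishing there.

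That last point is the one genuine soft spot in your write-up. For the $m=2$ conclusion you assert ``$W\neq0$ on a dense open subset'' by citing the strict $\gth$-monotonicity on nonradial sections, but by itself this does not control what happens on a set of $h\in\gO''$ where the section might be radial. You have all the ingredients to close this: you have shown $-\gD W = C_{\mathrm{sym}}W$ in $\gO$, $W=0$ on $\de\gO$, $W$ odd under $\gs$; since $m(U)=1$, the reflection $\gs$ forces $\gl_1^{\mathrm s}(\gO(e_2))\ge0$ (otherwise the odd and even extensions of the two first cap-eigenfunctions give a two-dimensional negative subspace), while $R(W|_{\gO(e_2)})=0$, so $\gl_1^{\mathrm s}(\gO(e_2))=0$ and $W|_{\gO(e_2)}$ is a first eigenfunction of the fully coupled symmetrised system in the cap, hence of one strict sign there by Theorem \ref{X-SMP}. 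This gives $W\neq0$ componentwise on $\gO(e_2)$, and \eqref{superfullycoupling2} then extends to all of $\gO$ by continuity, exactly matching the strength of the paper's conclusion. With that one sentence added, your proof is complete.
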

  \begin{remark}
  Under the assumptions of Theorem \ref{f'convessaSistemi}, the conditions   \eqref{superfullycoupling1} and \eqref{superfullycoupling2} hold more generally for solutions having Morse index $m(U) \leq k-1$ if 
   for some $ i_0, j_0 \in \{1, \dots ,m  \} $  the function $\frac {\de f_{i_0}} {\de s_{j_0}}(|x|,S)$ satisfies a strict convexity assumption as in Theorem 1.3 in \cite{DaGlPa1}.
  \end{remark}

   The paper is organized as follows. \par
 In Section 2 we recall suitable versions of weak and strong maximum principles as well as comparison principles for systems. Moreover we state some results from
 the spectral theory for an eigenvalue problem  related to a symmetrized version of the system \eqref{genprobsys}. Finally we define the Morse index. 
In Section 3 we give some sufficient conditions for  $k$-sectional foliated Schwarz symmetry and  prove  Theorem \ref{fconvessaSistemi}, Theorem \ref{f'convessaSistemi} and Corollary \ref{corollario1}.
 \par
 \medskip

\section{Preliminaries}
\subsection{Spectral theory for linear elliptic systems} \ \\
 Let $\gO$ be a  bounded domain in $\R^N$, $N \geq 2$, and $D$  a $m \times m $ matrix with bounded entries:
\begin{equation}\label{ipotesiD} D= \left (  d_{ij} \right ) _{i,j=1}^m \; , \; d_{ij} \in L^{\infty} (\gO)
\end{equation}
We consider the linear elliptic  system

\begin{equation}  \label{linearsystem} 
\begin{cases} - \gD U + D(x) U = F \quad &\text{in } \gO \\
U= 0 \quad & \text{on } \de  \gO
\end{cases}
\end{equation}

i.e. 

$$
\begin{cases}
 - \gD u_1 +  d_{11} u_1+  \dots   +d_{1m} u_m =f_1  & \text{ in } \gO  \\
  \dots  \dots & \dots \\ 
 - \gD u_m +  d_{m1} u_1+  \dots   +d_{mm} u_m =f_m  & \text{ in } \gO \\
  u_1=  \dots  = u_m   =0  & \text {  on } \de \gO 
\end{cases}
$$
 where $F=(f_1, \dots , f_m) \in (L^2 (\gO))^m $,  $U=(U_1, \dots , U_m) $. \par
 \medskip
 
  \begin{definition}\label{sistemilineariaccoppiati} 
 The matrix $D$ or the associated system \eqref{linearsystem} is said to be
 \par
 \smallskip
 \begin{itemize}
 \item  \emph{cooperative} or \emph{weakly coupled} in $ \gO $ if 
 \begin{equation} \label{weaklycoupled} d_{ij}  \leq 0 \; \;   \text{a.e. in }\gO , \quad \text{whenever } i \neq j
 \end{equation}
 \item \emph{fully coupled} in $ \gO $ if it is weakly coupled in $\gO $ and the following condition holds:
 \begin{equation}\label{fullycoupled}
 \begin{split}
  & \forall \, I,J \subset \{ 1, \dots , m \}\, ,\,  I,J \neq \emptyset \, , \, I \cap J = \emptyset \, , \, I \cup J = \{ 1, \dots , m \} \, \\
  &  \exists i_0 \in I \, , \, j_0 \in J \, : \text{meas } (\{ x \in \gO  : d_{i_0j_0} <0 \}) >0
  \end{split}
 \end{equation}
 \end{itemize}
 \end{definition}
 \par
\medskip 
Before going on we fix some notations and definitions. \par
 \begin{itemize}
 \item  Inequalities involving vectors should be understood  to hold componentwise, e.g. if  $\Psi = (\psi _1 , \dots , \psi _m) $,   $\Psi $ nonnegative  means that $\psi _j \geq 0 $ for any index $j=1, \dots , m$. 
 
 \item If $m \geq 2$ and $1 \leq p \leq \infty $  we will consider the Banach  spaces 
   $$
   \Lp =  \left ( L^p(\gO) \right )^m   \; ,   	\;  
    \textbf{W}^{1,p} (\gO)=  \left ( W^{1,p}(\gO) \right )^m
$$
If $p=2$   in particular we have the Hilbert spaces 
$$
  \textbf{L}^2 (\gO) =  \left ( L^2(\gO) \right )^m  \; ,   	\;  
   \textbf{H}^1 (\gO)=  \left ( H^1(\gO) \right )^m
   $$
   and the space 
   $ \Huno =   \left ( H_0^1(\gO) \right )^m $, i.e. the closure in $  \textbf{H}^1 (\gO) $ of the subspace $C_c^{1}(\Omega ; \R^m)$. 
   If $f=(f_1, \dots , f_m)$, $g=(g_1, \dots , g_m)$,   the  related scalar products are  
 \be 
 \begin{split}
 &(f,g)_{\Ldue}= \sum_{i=1}^m (f_i ,g_i )_{L^2(\gO)}=  \sum_{i=1}^m \int _{\gO} f_i \, g_i  \, dx \\ 
 &(f,g)_{\Huno}= \sum_{i=1}^m (f_i ,g_i )_{H_0^1(\gO)}=  \sum_{i=1}^m \int _{\gO} \nabla f_i \,\cdot \, \nabla g_i  \, dx
\end{split}
 \ee

 \item  If $U=(u_1, \dots , u_m) \, , \,  \Psi = (\psi _1, \dots , \psi _m ) \in \Huno$, and the matrix $D$  satisfies \eqref{ipotesiD}
we set  
\be \nabla U \cdot  \nabla \Psi = \sum _{i=1}^m  \nabla u_i \cdot  \nabla \psi _i  
\ee 
\be 
D(x) (U ,\Psi )= \sum _{i,j=1}^m d_{ij}(x) u_i  \psi _j 
\ee
\be \label{BilForm}
\begin{split}
B(U,\Psi)= B^D(U,\Psi)&= \int _{\gO} \left [ \nabla U \cdot \nabla \Psi + D(U, \Psi) \right ] \, dx  \\
&= \int _{\gO} \left [\sum_{i=1}^m  \nabla u_i \cdot \nabla \psi _i +\sum _{i,j=1}^m d_{ij}u_i \psi _j \right ] \, dx
\end{split}
\ee
i.e. 
 $D(x) (U ,\Psi )$ is the action of the bilinear form associated to the matrix $D$ on the pair $(U ,\Psi )$, and   $B$ is
  the bilinear form in $\Huno $ associated to the operator $- \gD + D$.

 \item If $U=(u_1, \dots , u_m) \in \textbf{H}^1 (\gO)$ we say that $U$ weakly satisfies 
 $$
 U \leq  0 \text{ on } \partial \gO  \;  \; ( \; U \geq  0 \text{ on } \partial \gO  \; )
 $$
  if $U^+ \in \Huno $ \  ( $U^- \in \Huno $ ), i.e. if 
 $u_i^+ \in H_0^1 (\Omega )$ ($u_i^- \in H_0^1 (\Omega )$) for any $i=1, \dots , m $.
 
 \item
 If $U=(u_1, \dots , u_m) \in \textbf{H}^1 (\gO)$ and $D$  satisfies \eqref{ipotesiD} we say that $U$ weakly satisfies the inequality
 \be \label{DisugDeboliSistemi} - \gD U + D(x) U \geq 0 \text{ in } \gO
\ee 
if 
 \be \label{DisugDeboliSistemiBis}
 \begin{split}
 \int _{\gO}  \nabla U \cdot  \nabla \Psi   & + D(x) (U ,\Psi )=  \\
 & \int _{\gO}\left [   \sum _{i=1}^m  \nabla u_i \cdot  \nabla \psi _i  +\sum _{i,j=1}^m d_{ij}(x) u_i  \psi _j \right ]  \, dx \geq 0
 \end{split}
\ee
for any nonnegative $\Psi = (\psi _1 , \dots , \psi _m) \in \Huno $ (which is equivalent to require that
$  \int _{\Omega } \left (  \nabla u_i \, \cdot \nabla  \psi   + \sum _{j=1}^m d_{ij}u_j \, \psi   \right )\, dx \geq 0 
$
 for any  $ \psi   \in H_0^1 (\Omega)   $ with $ \psi  \geq 0 $ and any $i=1, \dots , m$).
 \end{itemize}
 \par
 \smallskip
 It is well known that either condition \eqref{weaklycoupled} or conditions \eqref{weaklycoupled} and \eqref{fullycoupled} together are needed in the proofs of maximum principles for systems (see \cite{deF1}, \cite{deFM}, \cite{Si} and the references therein).
In particular if both are fulfilled the strong maximum principle holds as it is stated in the next  theorem (see \cite{DaPaBook}, \cite{deF1}, \cite{deFM}, \cite{Si} for the proof).

\begin{theorem}\label{X-SMP} (Strong Maximum Principle and Hopf's Lemma).  Suppose that \eqref{ipotesiD} and    \eqref{weaklycoupled}  hold  and $U=(u_1, \dots , u_m) \in C^1 (\gO;\R^m)$  is a weak solution of the inequalities 
$$- \gD U + D(x) U \geq 0 \text{ in } \gO \;  \; \text{ and } \; \; U \geq  0 \text{ in }\gO 
$$ 
Then:
\begin{enumerate}
\item for any $k \in \{ 1, \dots, m \} $   either  $u_k \equiv 0 $ or $u_k > 0 $ in $\Omega $; in the latter case if
$u_k \in C^1 (\ov {\gO};\R^m)$, $\Omega $ satisfies the interior sphere condition at 
 $P \in \partial \gO $ and $u_k(P)=0$ then $\frac {\de u_k }{\de \gn }(P) < 0$, where $\gn $ is the unit exterior normal vector at $P$.
\item
if in addition  \eqref{fullycoupled} holds, then the same alternative holds for all $k= 1, \dots , m $, i.e. 
 either $U \equiv 0 $ in $\gO $ or $U>0 $ in $\gO $. In the latter case if $U \in C^1 (\ov {\gO};\R^m)$, $\Omega $ satisfies the interior sphere condition at 
 $P \in \partial \gO $ and $U(P)=0$ then $\frac {\de U }{\de \gn }(P) < 0$, where $\gn $ is the unit exterior normal vector at $P$.
 \end{enumerate}
\end{theorem}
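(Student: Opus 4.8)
The plan is to reduce the vectorial statement to the classical scalar strong maximum principle and Hopf lemma, by moving in each scalar equation the off-diagonal terms to the right-hand side and using their sign.

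For part (1), I would fix $k\in\{1,\dots,m\}$ and use the scalar form of the weak inequality recalled after \eqref{DisugDeboliSistemiBis}: for every $\psi\in H_0^1(\gO)$ with $\psi\ge 0$,
\[
  \int_\gO\Bigl(\nabla u_k\cdot\nabla\psi+d_{kk}\,u_k\,\psi\Bigr)\,dx\;\ge\;-\int_\gO\Bigl(\sum_{j\ne k}d_{kj}\,u_j\Bigr)\psi\,dx.
\]
By \eqref{weaklycoupled} one has $d_{kj}\le 0$ for $j\ne k$, and $u_j\ge 0$ in $\gO$ by hypothesis, so the right-hand side is $\ge 0$. Hence $u_k$ is a nonnegative weak supersolution of the scalar equation $-\gD u_k+d_{kk}u_k=0$ in $\gO$, with $d_{kk}\in L^\infty(\gO)$ and $u_k$ locally in $H^1$. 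Absorbing a large constant, $-\gD u_k+(d_{kk}+\gl)u_k\ge\gl u_k\ge 0$ with $d_{kk}+\gl\ge 0$, the classical scalar strong maximum principle (via the weak Harnack inequality, together with the connectedness of $\gO$) gives the dichotomy $u_k\equiv 0$ or $u_k>0$ in $\gO$; in the latter case, if $u_k\in C^1(\ov{\gO})$, $\gO$ satisfies the interior sphere condition at $P\in\de\gO$ and $u_k(P)=0$, the classical Hopf lemma yields $\frac{\de u_k}{\de\gn}(P)<0$. This is exactly (1).

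For part (2), assume in addition \eqref{fullycoupled}, suppose $U\not\equiv 0$, and set $J=\{k:u_k>0\text{ in }\gO\}$, $I=\{1,\dots,m\}\setminus J$; by (1) this is a genuine partition of $\{1,\dots,m\}$, and $J\ne\emptyset$ since $U\not\equiv 0$. If $I$ were nonempty, then $I,J$ would satisfy the hypotheses in \eqref{fullycoupled}, so there would exist $i_0\in I$, $j_0\in J$ with $\mathrm{meas}(\{x\in\gO:d_{i_0j_0}<0\})>0$. Since $u_{i_0}\equiv 0$ (so $\nabla u_{i_0}\equiv 0$ and $d_{i_0i_0}u_{i_0}\equiv 0$), testing the $i_0$-th scalar inequality against $\psi\ge 0$ leaves $\int_\gO\bigl(\sum_{j\ne i_0}d_{i_0j}\,u_j\bigr)\psi\,dx\ge 0$; but every summand $d_{i_0j}u_j$ is $\le 0$, hence $\sum_{j\ne i_0}d_{i_0j}\,u_j=0$ a.e.\ in $\gO$, and in particular $d_{i_0j_0}\,u_{j_0}=0$ a.e., contradicting $u_{j_0}>0$ in $\gO$ together with $d_{i_0j_0}<0$ on a set of positive measure. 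Therefore $I=\emptyset$, i.e.\ $U>0$ in $\gO$, and the Hopf conclusion for $U$ follows by applying the scalar Hopf lemma componentwise as in (1).

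I do not expect a serious obstacle here: the only points needing care are checking that discarding the off-diagonal terms is legitimate with the correct sign (which is where \eqref{weaklycoupled} and $U\ge 0$ are used together) and that the assumed $C^1$ regularity of $U$ suffices to invoke the scalar theory for $H^1$ supersolutions. Alternatively one can simply quote \cite{deF1}, \cite{deFM}, \cite{Si}, \cite{DaPaBook}.
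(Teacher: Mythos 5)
Your proof is correct and follows essentially the same route as the paper's: for part (1) you move the off-diagonal terms to the right-hand side, use cooperativity and $U\ge 0$ to conclude that each $u_k$ is a nonnegative scalar supersolution of $-\gD+d_{kk}$, and invoke the scalar strong maximum principle and Hopf lemma; for part (2) you partition the indices into $I$ (vanishing components) and $J$ (positive components) and use full coupling to locate $i_0\in I$, $j_0\in J$ producing a contradiction. The only cosmetic difference is at the very last step: you derive the contradiction directly from the weak formulation of the $i_0$-th inequality (forcing $\sum_{j\ne i_0}d_{i_0j}u_j\equiv 0$, incompatible with $d_{i_0j_0}<0$ on a set of positive measure and $u_{j_0}>0$), whereas the paper notes that $u_{i_0}$ is a nonnegative supersolution with a nontrivial nonnegative right-hand side and invokes the scalar strong maximum principle once more to conclude $u_{i_0}>0$; these are the same observation phrased at two levels.
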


\par
\smallskip
Together with the bilinear form \eqref{BilForm} we consider the  quadratic form 
\begin{equation} \label{formaquadraticalineari}
\begin{split}
Q (\Psi  )= B^D(\Psi, \Psi) &= \int _{\gO} \left (\,  |\nabla \Psi |^2 + D(x) (\Psi ,\Psi ) \, \right ) dx= \\
& \int _{\gO}\left (   \sum _{i=1}^m |\nabla \psi _i |^2 +\sum _{i,j=1}^m  d_{ij}(x)  \psi _i \psi _j \right )  \, dx 
\end{split}
\end{equation}
for  $\Psi  =(\psi _1, \dots , \psi _m) \in   \Huno$. \\
Sometimes we will also write $Q (\Psi ; \gO ) $ instead of $Q (\Psi ) $ specifying the domain. \\
It is easy to see that this quadratic form coincides with the quadratic form  $B^C$
associated to the symmetric linear operator $- \Delta + C $
  where \ \ $C= \frac 12 (D + D^{t}) $ and $D^{t}$ is the transpose of $D$, i.e. 
 \begin{equation}\label{matricesimmetricaassociata}  C=(c_{ij}), \quad     c_{ij} =  \frac 12 \, (d_{ij} + d_{ji})
 \end{equation}
Let us observe that if the matrix $D$ is cooperative, respectively fully coupled, so is  the associate  matrix $D$.
\par
Thus,  let us review some results for a symmetric linear operator  $- \Delta + C $, with $C$ such that 
\be \label{ipotesiC} c_{ij} \in L^{\infty}(\Omega) \quad , \quad c_{ij}=c_{ji} \quad \text{ a.e. in }  \gO
\ee  
Let  us consider the bilinear form
 \be \label{Xbilinearform} B(U,\Phi)= \int _{\gO} \left [ \nabla U \cdot \nabla \Phi + C(U, \Phi) \right ] = \int _{\gO} \left [\sum_{i=1}^m  \nabla u_i \cdot \nabla \phi _i +\sum _{i,j=1}^m c_{ij}u_i \phi _j \right ] 
 \ee

 Using the theory of compact selfadjoint operators we get that
 there exists a sequence $\{\gl _j \}= \{\gl _j (- \Delta + C) \}$ of eigenvalues, with
 $- \infty < \gl _1 \leq \gl _2 \leq \dots $, $\lim _{j \to + \infty} \gl _j = + \infty $, and a corresponding sequence of eigenfunctions $\{ W^j \}$ which weakly solve the systems
 \be \label{sistemaautovalori}
 \begin{cases} - \gD W^j + C  W^j =   \gl _j W^j &  \text{ in }  \gO \\
  W^j =0 & \text{ on } \de \gO
 \end{cases}
 \ee
i.e. if $W^j=(w_1, \dots , w_m)$
$$
\begin{cases}
 - \gD w_1 +  c_{11} w_1+  \dots   +c_{1m} w_m &= \gl _j  w_1 \\
  \dots  \dots & \dots \\ 
 - \gD w_m +  c_{m1} w_1+  \dots   +c_{mm} w_m &=\gl _j  w_m
\end{cases}
$$ 
Moreover by (scalar) elliptic regularity theory applied iteratively to each equation, the eigenfunctions $W^j $ belong at least to $  C^{1}(\gO;\R^m)$ and the eigenvalues can be given a variational formulation. We refer to \cite{DaPaSys}, \cite{DaPaBook} for the construction of the sequences $\gl _j $ and  $\{ W^j \}$ as well as for the  proof of the following theorem, which gives some variational properties of eigenvalues and eigenfunctions.
\par
  \smallskip  
\begin{theorem}\label{Xvarformautov} Let $\Omega $ be a bounded domain in $\R^N$, $N \geq 2 $.  Suppose that $C=(c_{ij})_{i,j=1}^m $ satisfies \eqref{ipotesiC}, and let $\{\gl _j \}$, $\{ W^j \}$ be the sequences of eigenvalues and eigenfunctions satisfying \eqref{sistemaautovalori}.\\
Define the Rayleigh quotient
\be R(V)= \frac {Q(V)}{(V,V)_{\Ldue}} \quad \text{ for } V \in \Huno \; ,  \quad V \neq 0
\ee
with $Q(V)=B(V,V)$ and $B$ as in \eqref{Xbilinearform}.
Then the following properties hold, where  $\textbf{H}_k$ denotes a $k$-dimensional subspace of $\Huno$ and the orthogonality conditions  $V \bot W^k $  or  $V \bot \textbf{H}_k $  stand for the orthogonality in $\Ldue$.  
\begin{itemize}
\item[i)] 
$$\gl _1 =\min  _{\substack{ V\in \Huno \\ V \neq 0 } } R(V) =  \min  _{\substack{ V\in \Huno \\  (V,V)_{\ldue}=1 } } Q(V) 
$$
\item[ii)]    if $k \geq 2$ then 
$$\gl _k = \min  _{\substack{ V\in \Huno \\ V \neq 0 \\ V \bot W^1,\dots , V \bot W^{k-1} }} R(V)
= \min  _{\substack{ V\in \Huno \\  (V,V)_{\Ldue}=1 \\ V \bot W^1,\dots , V \bot W^{k-1} }} Q(V) 
$$  
$$ = \min  _{\textbf{H}_k }  \;  \max  _{ \substack{ V \in \textbf{H}_k \\ V \neq 0  }} R(V)  =\max  _{\textbf{H}_{k-1} }   \;  \min  _{\substack{ V \bot   \textbf{H}_{k-1 }\\ V \neq 0 }} R(V) 
$$
\item[iii)] if $W \in \Huno$, $W \neq 0$, and $R(W)= \gl _1$, then $W$ is an eigenfunction corresponding to $\gl _1$.
\item[iv)] if the system is fully coupled in $\gO $,  then the first eigenfunction does not change sign in $\gO $ and  the first eigenvalue is simple, i.e. up to scalar multiplication there is only one eigenfunction corresponding to the first eigenvalue. 
\item[v)]  \ \  if  $X= \left ( C_c^{1} (\Omega  ) \right ) ^m$ and  we denote by 
$X _ k  
$ 
a $k$-dimensional  subspace of  $X$ then
 $$\gl _k = \inf _{X_k }  \;  \max _{\substack{  v \in X_k \\ v \neq 0 } }  \;R(v) 
$$
\item[vi)] \   let us consider an open subset $\Omega '  \subset \Omega $
and set
$\gl _1 (\gO ') = \gl _1 \left ( - \gD  +  C   \; ; \; \Omega '     \right )$.
  Then
$$
 \lim _{\text{ meas }(\gO ') \to 0} \gl _1 (\gO ') = + \infty 
 $$

\end{itemize}
\end{theorem}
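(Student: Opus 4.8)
The plan is to obtain the whole list of properties from a direct-minimization argument together with the spectral decomposition recalled just before the statement: the eigenfunctions $W^j$ may be chosen so as to form an orthonormal basis of $\Ldue$ (the resolvent $(-\gD+C+\gL I)^{-1}$ being, for $\gL$ large, positive, compact and selfadjoint), with $\gl_j\to+\infty$ and $B(W^i,W^j)=\gl_j(W^i,W^j)_{\Ldue}=\gl_j\,\gd_{ij}$. For i), since $Q(V)\ge\int_{\gO}|\nabla V|^2-\|C\|_\infty(V,V)_{\Ldue}$, the quotient $R$ is bounded below on $\Huno\setminus\{0\}$; set $\gl=\inf R$. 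A minimizing sequence normalized in $\Ldue$ is bounded in $\Huno$, hence by Rellich's theorem it has a subsequence converging weakly in $\Huno$ and strongly in $\Ldue$ to some $V_0\ne0$. Weak lower semicontinuity of $V\mapsto\int_{\gO}|\nabla V|^2$ and strong $\Ldue$-continuity of $V\mapsto\int_{\gO}C(V,V)$ (here $C$ is bounded) give $Q(V_0)\le\gl(V_0,V_0)_{\Ldue}$, so $V_0$ attains $\gl$; differentiating $t\mapsto R(V_0+t\Phi)$ at $t=0$ yields $B(V_0,\Phi)=\gl(V_0,\Phi)_{\Ldue}$ for every $\Phi\in\Huno$, i.e. $\gl$ is an eigenvalue, and since $\gl_1$ is the least eigenvalue and $R(W^1)=\gl_1\ge\gl$ we get $\gl=\gl_1$. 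The same Euler--Lagrange computation proves iii): any $W$ with $R(W)=\gl_1=\min R$ is a minimizer, hence an eigenfunction.

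For ii) I would argue inductively: once $W^1,\dots,W^{k-1}$ are known, minimize $R$ over the closed subspace $\{V\in\Huno:V\bot W^1,\dots,W^{k-1}\}$ exactly as above; the minimizer $W^k$ satisfies $B(W^k,\Phi)=\gm(W^k,\Phi)_{\Ldue}$ for $\Phi$ in that subspace, and for $\Phi=W^j$ with $j<k$ both sides vanish by symmetry of $B$ and orthogonality, so $W^k$ is a genuine eigenfunction; a short argument using that the $W^j$ form an orthonormal basis identifies its eigenvalue $\gm$ with $\gl_k$, which is nondecreasing in $k$ since the admissible set shrinks. The two Courant--Fischer formulas then follow by the usual dimension count: testing with $\mathrm{span}(W^1,\dots,W^k)$, where $Q(\sum a_jW^j)=\sum\gl_j a_j^2\le\gl_k\sum a_j^2$, gives $\min_{\textbf{H}_k}\max R\le\gl_k$, while any $k$-dimensional $\textbf{H}_k$ contains a nonzero $V\bot W^1,\dots,W^{k-1}$, for which $R(V)\ge\gl_k$; the $\max$--$\min$ formula is symmetric, using that any $(k-1)$-dimensional $\textbf{H}_{k-1}$ is met orthogonally by some nonzero element of $\mathrm{span}(W^1,\dots,W^k)$. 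For v), $\inf_{X_k}\max R\ge\gl_k$ is immediate from ii) since $X_k\subset\Huno$, and the reverse inequality uses density of $(C^1_c(\gO))^m$ in $\Huno$: choosing $v^j\to W^j$ in $\Huno$, the Gram matrices stay close to the identity so $\mathrm{span}(v^1,\dots,v^k)$ remains $k$-dimensional, and $(a_1,\dots,a_k)\mapsto R(\sum a_j v^j)$ converges uniformly on the unit sphere of $\R^k$ to $(a_j)\mapsto R(\sum a_j W^j)\le\gl_k$, so $\max_{\mathrm{span}(v^j)}R\le\gl_k+\gep$.

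Item iv) is where the work lies. Let $W^1=(w_1,\dots,w_m)$ be a first eigenfunction and set $|W^1|=(|w_1|,\dots,|w_m|)$. Since $|\nabla|w_i||=|\nabla w_i|$ a.e. the Dirichlet part of $Q$ is unchanged, and cooperativity ($c_{ij}\le0$ for $i\ne j$, which is contained in full coupling) gives $\sum_{i,j}c_{ij}|w_i||w_j|\le\sum_{i,j}c_{ij}w_i w_j$; hence $R(|W^1|)\le R(W^1)=\gl_1=\min R$, so $R(|W^1|)=\gl_1$ and, by iii), $|W^1|$ is itself a first eigenfunction. Writing its equation as $-\gD|w_i|+(c_{ii}-\gl_1)|w_i|=-\sum_{j\ne i}c_{ij}|w_j|\ge0$ and applying the scalar strong maximum principle (the case $m=1$ of Theorem~\ref{X-SMP}) shows that each $|w_i|$ is either $\equiv0$ or $>0$ in $\gO$; if some component vanished, full coupling would supply indices $i_0,j_0$ with $-\gD|w_{i_0}|+(c_{i_0 i_0}-\gl_1)|w_{i_0}|\ge-c_{i_0 j_0}|w_{j_0}|\not\equiv0$, contradicting $|w_{i_0}|\equiv0$; so every $|w_i|>0$, and since $W^1$ is continuous on the connected set $\gO$ each component has a constant strict sign, i.e. $W^1$ does not change sign. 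For simplicity, if $W$ and $\tilde W$ were linearly independent first eigenfunctions, fix $x_0\in\gO$ and choose $(\ga,\gb)\ne(0,0)$ with $\ga w_1(x_0)+\gb\tilde w_1(x_0)=0$; then $V:=\ga W+\gb\tilde W$ is a nonzero first eigenfunction whose first component vanishes at $x_0$, contradicting $|v_1|>0$ in $\gO$. Thus $\gl_1$ is simple and $W^1$ is a scalar multiple of the strictly positive function $|W^1|$.

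Finally, for vi): for $V\in\textbf{H}_0^1(\gO')^m$ extended by zero, $R(V;\gO')\ge\big(\int_{\gO'}|\nabla V|^2\big)\big/\big(\int_{\gO'}|V|^2\big)-\|C\|_\infty$, and by the Faber--Krahn inequality (equivalently, the Sobolev inequality combined with H\"older's inequality, applied componentwise) $\int_{\gO'}|V|^2\le c_N\,\text{meas}(\gO')^{2/N}\int_{\gO'}|\nabla V|^2$; hence $\gl_1(\gO')=\inf R(\cdot;\gO')\ge c_N^{-1}\text{meas}(\gO')^{-2/N}-\|C\|_\infty\to+\infty$ as $\text{meas}(\gO')\to0$. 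The main obstacle throughout is item iv): the reflection trick $W^1\mapsto|W^1|$ must be combined carefully with the strong maximum principle and with full coupling, both to rule out vanishing components and to obtain simplicity, and it is exactly here that the cooperativity/full-coupling hypotheses are indispensable; the remaining items are the standard abstract spectral theory adapted to the vectorial quadratic form $Q$.
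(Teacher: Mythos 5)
Your proof is correct and essentially parallel to the argument the paper delegates to \cite{DaPaSys,DaPaBook} (and which survives in a commented-out block of the source): a direct minimization of the Rayleigh quotient for i)--iii), the induction plus Courant--Fischer dimension count for ii), density of $(C_c^1(\Omega))^m$ for v), and a Sobolev--H\"older scaling for vi). The one place where you deviate is iv). The paper's version works with the positive and negative parts $W^{\pm}$ of the first eigenfunction: cooperativity gives $B(W^{+},W^{+})\le\gl_1(W^{+},W^{+})_{\Ldue}$, so each nonvanishing part is itself a first eigenfunction, and the fully coupled vectorial strong maximum principle forces $W^{+}>0$ (hence $W^{-}\equiv 0$) or vice versa; simplicity then follows because two sign-definite eigenfunctions cannot be $\Ldue$-orthogonal. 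You instead pass to $|W^1|=(|w_1|,\dots,|w_m|)$, using the same Kato-type inequality (equal Dirichlet part, cooperativity on the off-diagonal terms) to get $R(|W^1|)\le\gl_1$, then the strong maximum principle componentwise together with the full-coupling dichotomy to obtain $|W^1|>0$, and finally simplicity via a ``prescribed zero'' argument: any nontrivial linear combination of two independent first eigenfunctions would again have strictly positive modulus, contradicting a chosen zero of its first component. Both devices are standard and equally rigorous; the $|W^1|$ route is slightly more economical because it avoids the case distinction ``$W^{+}\ne 0$ or $W^{-}\ne 0$,'' while the $W^{\pm}$ route makes the positivity of the first eigenfunction visible one step earlier. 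One tiny wording caveat: when you write ``each component has a constant strict sign, i.e.\ $W^1$ does not change sign,'' at that point you have only shown componentwise sign-definiteness, not that all components share the same sign; the latter (which is what the theorem asserts, given the paper's componentwise convention for vector inequalities) only follows once simplicity gives $W^1=c\,|W^1|$. You do reach this in the last sentence, so the proof is complete, but the order of claims should be adjusted so as not to assert the conclusion before it is established.
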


\par
\medskip

 \subsection{Weak Maximum Principle for Cooperative systems} \ \\
 Let us turn back to the (possibly) nonsymmetric  cooperative system \eqref{linearsystem}
 with $D$ satisfying \eqref{ipotesiD} and \eqref{weaklycoupled}.
We consider the associated symmetric  matrix $C$  given by  \eqref{matricesimmetricaassociata}
and we denote by $ \gl _j^{\text{(s)}}=  \gl _j^{\text{(s)}}(-\gD +D; \gO )    $ the eigenvalues of the corresponding symmetric linear operator $- \Delta + C$ and by  $W_j^{\text(s)}$ the corresponding eigenfunctions. \par
The eigenvalues  
$ \gl _j^{\text{(s)}} $ will be called \textbf{symmetric eigenvalues} of the (possibly nonsimmetric) operator $- \Delta + D$.\par
The bilinear form corresponding to the symmetric operator will be denoted by $ B^{\text{s}}(U,\Phi) $, i.e.
 $ B^{\text{s}} $ is as \eqref{Xbilinearform}.
 
  As already remarked, the quadratic form \eqref{formaquadraticalineari} corresponding to the linear operator $- \Delta + D $ coincides with 
   that associated to the  symmetric linear operator $- \Delta + C $.
\par 
\smallskip
\begin{definition} We say that the maximum principle holds for the operator $- \gD + D$ in an open set  $\gO ' \subseteq \gO $ if  for any $U \in \textbf{H}^1 (\gO ')$ such that 
  $U \leq  0 $ on $ \de \gO ' $ (i.e. $U^{+} \in \textbf{H}_0^1 (\gO ')$ )  and
   $- \gD U + D(x)U \leq 0 $ in $\gO ' $ (i.e. 
$\int \nabla U \cdot  \nabla \Phi + D(x) (U, \Phi ) \leq 0 $
for any nonnnegative  $\Phi \in \textbf{H}_0^1 (\gO ') $ )
 it holds  that  $U \leq 0 $ a.e. in $\gO $.
\end{definition}
Let us denote by $  \gl _j^{\text{(s)}}  (\gO ')   $, $j \in \N^+$, the sequence of the symmetric eigenvalues of the   linear operator $- \Delta + D $ (i.e. the eigenvalues of $- \Delta + C $) in an open set $\gO ' \subseteq \gO$.
\par 
\smallskip
\begin{theorem}\label{WMPSystems} [Sufficient condition for weak maximum principle] \  Under the hypothesis  \eqref{ipotesiD} and \eqref{weaklycoupled}, if
 $  \gl _1^{\text{(s)}}  (\gO ') >0  $ then the maximum principle holds for the operator $- \gD + D$ in $\gO ' \subseteq \gO $. \\
\end{theorem}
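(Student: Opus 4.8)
The plan is to test the differential inequality against the positive part $U^{+}=(u_1^{+},\dots,u_m^{+})$ of $U$, show that its energy is nonpositive, and then use the positivity of the first symmetric eigenvalue to force $U^{+}\equiv 0$. First I would check that $U^{+}$ is an admissible test function: the hypothesis $U\le 0$ on $\de\gO'$ means precisely $U^{+}\in\textbf{H}_0^1(\gO')$, and $U^{+}$ is nonnegative, so inserting $\Phi=U^{+}$ in the weak formulation of $-\gD U+D(x)U\le 0$ in $\gO'$ gives
\be
\int_{\gO'}\nabla U\cdot\nabla U^{+}+D(x)(U,U^{+})\,dx\le 0 .
\ee
By the standard truncation rules for Sobolev functions ($\nabla u_i^{+}$ equals $\nabla u_i$ on $\{u_i>0\}$ and vanishes elsewhere) one has $\nabla u_i\cdot\nabla u_i^{+}=|\nabla u_i^{+}|^2$ a.e., hence the gradient term equals $\int_{\gO'}|\nabla U^{+}|^2\,dx$.

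Next I would treat the zero-order term using cooperativity. Writing $u_i=u_i^{+}-u_i^{-}$,
\be
D(x)(U,U^{+})=\sum_{i,j=1}^m d_{ij}u_i u_j^{+}=\sum_{i,j=1}^m d_{ij}u_i^{+}u_j^{+}-\sum_{i\neq j}d_{ij}u_i^{-}u_j^{+} ,
\ee
where the diagonal terms of the last sum disappear since $u_i^{+}u_i^{-}=0$. By \eqref{weaklycoupled} we have $d_{ij}\le 0$ for $i\neq j$, while $u_i^{-}u_j^{+}\ge 0$, so $-\sum_{i\neq j}d_{ij}u_i^{-}u_j^{+}\ge 0$ pointwise; therefore $D(x)(U,U^{+})\ge D(x)(U^{+},U^{+})$ a.e.\ in $\gO'$. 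Combining this with the gradient identity,
\be
Q(U^{+};\gO')=\int_{\gO'}\left(|\nabla U^{+}|^2+D(x)(U^{+},U^{+})\right)dx\le\int_{\gO'}\left(|\nabla U^{+}|^2+D(x)(U,U^{+})\right)dx\le 0 .
\ee

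Finally I would invoke the spectral theory. Since the quadratic form of $-\gD+D$ coincides with that of the associated symmetric operator $-\gD+C$, the variational characterization of the first eigenvalue (Theorem \ref{Xvarformautov}, i), applied on $\gO'$) gives $Q(U^{+};\gO')\ge\gl_1^{\text{(s)}}(\gO')\,(U^{+},U^{+})_{L^2(\gO')}$. Hence $0\ge Q(U^{+};\gO')\ge\gl_1^{\text{(s)}}(\gO')\,\|U^{+}\|_{L^2(\gO')}^2$, and because $\gl_1^{\text{(s)}}(\gO')>0$ this forces $\|U^{+}\|_{L^2(\gO')}=0$, i.e.\ $U^{+}=0$ a.e., which is exactly $U\le 0$ a.e.\ in $\gO'$. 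I do not expect any real obstacle here: the only points requiring care are the routine manipulations with $U^{\pm}$ in $\textbf{H}^1$ and the observation that cooperativity is used precisely to absorb the off-diagonal zero-order contributions produced by the truncation.
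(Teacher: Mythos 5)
Your proof is correct and follows essentially the same route as the paper's: test the weak inequality with $U^{+}$, split $u_j=u_j^{+}-u_j^{-}$, use cooperativity to absorb the off-diagonal cross terms, obtain $Q(U^{+};\gO')\le 0$, and conclude $U^{+}\equiv 0$ from $\gl_1^{\text{(s)}}(\gO')>0$. The only difference is that you carry the algebra out more explicitly; the paper compresses it into one sentence, and in fact is slightly loose in writing $c_{ij}$ where $d_{ij}$ would be the natural coefficient at that stage, whereas your version keeps the bookkeeping clean.
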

\begin{proof} By the variational characterization of  the first eigenvalue given in Theorem \ref{Xvarformautov} we have 
$$
\gl _1 ^{\text{(s)}} = \text{ min } _{ \substack {V\in \textbf{H}^1 (\gO ') \\ V \neq 0}} R(V) > 0 
$$
 so that $Q(V)=B^{\text{s}}(V,V) >0 $ for any $V \neq 0 $ in $ \textbf{H}_0^1 (\gO ') $. \\
Assume that \  $U \leq  0 $ on $ \de \gO ' $ \ and  \  \ $- \gD U + D(x)U \leq 0 $ in $\gO ' $. 
Then, testing the equation with $U^{+}=(u_1^{+},\dots , u_m^{+})$, writing in the $i$-th equation 
$u_j= u_j^{+} - u_j^{-}$ for $i \neq j $, and recalling that $-c_{ij}u_i^{+} u_j^{-} \geq 0 $ if $i\neq j$, we obtain that 
$B^{\text{s}}(U^{+},U^{+}) \leq 0 $, which implies $U^{+} \equiv 0 $ in $ \gO ' $.
\end{proof}
 
 As an almost immediate consequence we get a quick proof of the  following ''classical'' and ''small measure'' forms of the weak maximum principle (see   \cite{BuSi2}, \cite{deFM},  \cite{PrWe}, \cite {Si}).

\begin{theorem} \label{WeakMaximumSistemi}  Assume that  \eqref{ipotesiD} and \eqref{weaklycoupled} hold. 
\begin{itemize}
\item[i) ] If  $D$ is a.e. nonnegative definite in $\gO ' $ then the maximum principle holds for
$- \gD + D$ in $ \gO ' $. 
\item[ii) ]
There exists $\gd >0 $, depending on $D$, such that for any subdomain $\gO ' \subseteq \gO $ the maximum principle holds for $- \gD + D$ in $\gO ' \subseteq \gO $ provided $|\gO ' | \leq \gd$. 
\end{itemize}
\end{theorem}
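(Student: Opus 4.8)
The plan is to deduce both statements from Theorem \ref{WMPSystems}: since the quadratic form $Q$ associated with $-\gD + D$ coincides with the one associated with the symmetrized operator $-\gD + C$, $C=\frac12(D+D^t)$, it suffices in each case to verify that the first symmetric eigenvalue $\gl _1^{\text{(s)}}(\gO ')$ in $\gO '$ is strictly positive, and then the weak maximum principle for $-\gD + D$ in $\gO '$ is exactly the conclusion of Theorem \ref{WMPSystems}.

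For part i), I would use the variational characterization of Theorem \ref{Xvarformautov}(i) on the subdomain $\gO '$, namely
\[
\gl _1^{\text{(s)}}(\gO ') = \min_{\substack{ V \in \textbf{H}_0^1(\gO ') \\ V \neq 0 }} \frac{\displaystyle\int _{\gO '}\bigl( |\nabla V|^2 + D(x)(V,V)\bigr)\, dx}{(V,V)_{\textbf{L}^2 (\gO ')}} .
\]
If $D(x)$ is nonnegative definite for a.e. $x \in \gO '$, then $D(x)(V,V)=\sum_{i,j} d_{ij}(x)v_i v_j \geq 0$ a.e., hence $Q(V) \geq \int _{\gO '} |\nabla V|^2\, dx$. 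Since $\gO ' \subseteq \gO$ is bounded, the scalar Poincar\'e inequality (equivalently, monotonicity of $\gl _1(-\gD;\,\cdot\,)$ with respect to domains) gives $\int _{\gO '} |\nabla V|^2\, dx \geq \gl _1(-\gD;\gO)\,(V,V)_{\textbf{L}^2 (\gO ')}$ with $\gl _1(-\gD;\gO)>0$, so $\gl _1^{\text{(s)}}(\gO ') \geq \gl _1(-\gD;\gO) > 0$ and Theorem \ref{WMPSystems} applies.

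For part ii), the quickest route is Theorem \ref{Xvarformautov}(vi) applied to the symmetric matrix $C$: $\gl _1^{\text{(s)}}(\gO ') = \gl _1(-\gD + C;\gO ') \to +\infty$ as $\text{meas}(\gO ') \to 0$. Hence there exists $\gd > 0$, depending only on $D$ (through $C$ and $\gO$), such that $\gl _1^{\text{(s)}}(\gO ') > 0$ for every subdomain $\gO ' \subseteq \gO$ with $|\gO '| \leq \gd$, and Theorem \ref{WMPSystems} again yields the maximum principle. (One may also argue directly: $|\int _{\gO '} D(x)(V,V)\,dx| \leq \Vert D\Vert_{\infty}\int _{\gO '} |V|^2\,dx \leq \Vert D\Vert_{\infty}\, c_N |\gO '|^{2/N} \int _{\gO '}|\nabla V|^2\,dx$ by Faber--Krahn/Poincar\'e, so $Q(V) \geq (1-\Vert D\Vert_{\infty}\, c_N|\gO '|^{2/N})\int _{\gO '}|\nabla V|^2\,dx > 0$ once $|\gO '|$ is small, giving the same conclusion.) There is no real obstacle here: the only point needing a little care is to remember that $Q$ is the same for $D$ and for its symmetrization $C$, so that the eigenvalue estimates for the symmetric operator are precisely what controls the (possibly nonsymmetric) maximum principle.
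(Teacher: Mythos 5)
Your proposal is correct and takes essentially the same route as the paper: both parts are reduced to $\gl_1^{\text{(s)}}(\gO')>0$ and then to Theorem \ref{WMPSystems}, with part i) using that nonnegative definiteness of $D$ makes $Q(\Psi)\geq\int|\nabla\Psi|^2$, and part ii) invoking Theorem \ref{Xvarformautov}(vi). Your use of the Poincar\'e inequality in part i) to get a quantitative positive lower bound for $\gl_1^{\text{(s)}}(\gO')$ is a slightly more explicit justification than the paper's (which tacitly uses that the Rayleigh infimum is attained), but it is the same argument in substance.
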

\begin{proof} i) \  If the matrix $D$ is  nonnegative definite then 
$$
Q(\Psi)=B^{\text{s}}(\Psi , \Psi ) \geq \int _{\gO } |\nabla \Psi |^2 >0 
\text{ for any  } \Psi \in  \textbf{H}_0^1 (\gO ')  \setminus \{ 0 \} \; .
$$
Hence $  \gl _1^{\text{(s)}}  (\gO ') >0  $,  and by Theorem \ref{WMPSystems} we get i).\\   
ii) It is a consequence of Theorem \ref{WMPSystems} and   Theorem \ref{Xvarformautov},  vi).
\end{proof}
 
 \begin{remark}
Obviously the converse of Theorem \ref{WMPSystems} holds if $D=C$ i.e. if $D$ is symmetric: if the maximum principle holds for $- \gD + C$ in $\gO '$ then $\gl _1^{\text{(s)}} (\gO ' )>0$.
Indeed if $\gl _1^{\text{(s)}} (\gO ' )\leq 0 $, since the system is cooperative (and symmetric), there exists a corresponding nontrivial nonnegative first eigenfunction $\Phi _1 \geq 0 $, 
$\Phi \not \equiv 0$, and  the maximum principle does not hold, since $- \gD \Phi _1 + C\, \Phi _1 = \gl _1 \Phi _1 \leq 0 $ in $\gO '$, $\Phi _1 =0 $ on $\de \gO '$, 
while  $\Phi _1 \geq 0 $ and
$\Phi _1 \neq 0$.
However the converse of Theorem \ref{WMPSystems} is not true for general nonsymmetric systems, since there is an equivalence between the validity of the maximum principle for the operator $- \gD + D$ and the positivity of another eigenvalue, the  \emph{principal eigenvalue} $\tilde {\gl _1}$,  (we recall below the definition given in \cite{BeNiVa}), and  the inequality 
$\tilde {\gl _1} (\gO ') \geq    \gl _1^{\text{(s)}}  (\gO ' )  
$,  which can be  strict, holds.
\end{remark}
 \par  
\medskip

\begin{definition} \label{DefinizPrincipaleigenvalue}

The \textbf{principal eigenvalue}  
of the operator  
 $- \gD + D$ in an open set $\gO ' \subseteq \gO $ is defined as  
\be
\begin{split}
 \tilde {\gl _1} (\gO  ') &= 
   \sup \{ \gl \in \R : \exists \, \Psi \in W^{2,N}_{loc} (\gO ' ; \R^m) \; \text { s.t. } \\ 
&      \Psi  >0  \; , \;  - \gD \Psi + D(x) \Psi - \gl \Psi \geq 0 \text{ in } \gO '\}
\end{split}
\ee
\end{definition}
\par
\medskip

Let us recall some of the properties of the principal eigenvalue.
We refer to \cite{BuSi2}  for the proofs of items i) --  iii), as well as for  references on the subject, and to  \cite{DaPaSys}, \cite{DaPaBook} for the proof of iv). \par

\begin{theorem}\label{principaleigenvalue} Assume that the matrix $D$  is fully coupled in an open set  $\gO ' \subseteq \gO $, i.e. \eqref{fullycoupled} holds. Then:
\begin{itemize}
\item[i) ] there exists a positive eigenfunction $\Psi _1 \in  W^{2,N}_{loc} (\gO ' ; \R^m)$ which satisfies  
\be \label{autofprincipale}   
\begin{cases}
- & \gD \Psi _1 + D(x)  \Psi _1  = \tilde {\gl _1 } (\gO ') \Psi _1 \text { in } \gO ' \\
 & \Psi _1  >0  \;   \text { in }  \gO '  \\
 & \Psi _1  =0   \;  \text { on } \de \gO ' 
\end{cases}
\ee
Moreover the principal eigenvalue is simple, i.e. any function that satisfy \eqref{autofprincipale} must be a multiple of $\Psi _1$
\item[ii) ]  the maximum principle holds for the operator $- \gD + D$ in $\gO '$ if and only if $\tilde {\gl _1} (\gO ' ) >0$
\item[iii) ] if there exists  $\Psi \in  W^{2,N}_{loc} (\gO ' ; \R^m) $ such that   $\Psi  >0$ and   $ - \gD \Psi + D(x) \Psi  \geq 0 $ in $\gO '$, then either 
$\tilde {\gl _1} (\gO  ') >0$ or $\tilde {\gl _1} (\gO ') =0 $ and $\Psi = c\, \Psi _1$ for some  $c>0$
\item[iv) ]  $\tilde {\gl _1} (\gO ' ) \geq  \gl _1^{\text{(s)}}  (\gO ' ) $, with equality if and only if $\Psi _1 $ is also the first eigenfunction of the symmetric operator $- \gD + C$ in $\gO '$ ,  $ C=\frac 12 (D + D^{\text{t}})$.
If  this is the case the equality $C(x) \Psi _1 =D(x) \Psi _1$ holds and,  if $m=2$, this implies that $d_{12}= d_{21}$.
\end{itemize}
\end{theorem}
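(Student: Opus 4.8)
We sketch the argument, which is carried out in detail in \cite{BuSi2,DaPaSys,DaPaBook}; the plan is to treat the four items in order, item i) being the substantive one and ii)--iv) comparison statements built on the principal eigenfunction $\Psi_1$ produced in i). To prove i) I would obtain $\Psi_1$ and $\tilde{\gl_1}(\gO ')$ by a Krein--Rutman argument. Fixing $\gL>0$ large enough that $D+\gL I$ is a.e.\ nonnegative definite, the bilinear form of $-\gD+D+\gL I$ is coercive on $\textbf{H}_0^1(\gO ')$ (by Poincar\'e's inequality), so the resolvent $T=(-\gD+D+\gL I)^{-1}$ is a well-defined bounded operator on $\textbf{L}^2(\gO ')$; it is compact by the compact embedding $\textbf{H}_0^1(\gO ')\hookrightarrow\textbf{L}^2(\gO ')$, and --- because the system is \emph{fully coupled} --- strongly positive with respect to the cone of componentwise nonnegative functions, by the weak and strong maximum principles of Theorems \ref{WMPSystems} and \ref{X-SMP}. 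The Krein--Rutman theorem then yields a simple dominant eigenvalue $\gm_1>0$ of $T$ with a positive eigenfunction $\Psi_1$; one sets $\tilde{\gl_1}(\gO ')=\gm_1^{-1}-\gL$, so that \eqref{autofprincipale} holds, and interior elliptic regularity puts $\Psi_1\in W^{2,N}_{loc}(\gO ';\R^m)$. Identifying this number with the supremum in Definition \ref{DefinizPrincipaleigenvalue} is the Berestycki--Nirenberg--Varadhan step: ``$\ge$'' is immediate since $\Psi_1$ is admissible, while ``$\le$'' follows by the sliding method --- a $\gl$ larger than this eigenvalue admitting a positive supersolution $\Psi$ would, after sliding $t\Psi_1$ up under $\Psi$ to first interior contact, produce a nonnegative nontrivial solution of the cooperative, fully coupled system $-\gD w+(D-\gl I)w\ge0$ vanishing at an interior point, contradicting Theorem \ref{X-SMP}; for irregular $\gO '$ one runs this on an exhaustion by smooth subdomains, passing to the limit via monotonicity and continuity of the principal eigenvalue in the domain. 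Simplicity of $\tilde{\gl_1}$ is part of the Krein--Rutman conclusion.

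Items ii) and iii) would then follow by comparison with $\Psi_1$. For the ``if'' part of ii): when $\tilde{\gl_1}(\gO ')>0$, $\Psi_1$ is a positive \emph{strict} supersolution, $(-\gD+D)\Psi_1=\tilde{\gl_1}\Psi_1>0$; for $U$ with $U\le0$ on $\de\gO '$ and $(-\gD+D)U\le0$, set $\gs^*=\inf\{\gs>0:U\le\gs\Psi_1\text{ in }\gO '\}$, which is finite because Hopf's lemma controls $U/\Psi_1$ near $\de\gO '$; if $\gs^*>0$, the nonnegative function $\gs^*\Psi_1-U$ satisfies $(-\gD+D)(\gs^*\Psi_1-U)\ge\gs^*\tilde{\gl_1}\Psi_1>0$, and the strong maximum principle together with Hopf then contradicts the minimality of $\gs^*$, so $U\le0$. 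For the ``only if'' part: if $\tilde{\gl_1}(\gO ')\le0$, then $\Psi_1$ itself violates the maximum principle, since $-\gD\Psi_1+D\Psi_1=\tilde{\gl_1}\Psi_1\le0$ and $\Psi_1=0$ on $\de\gO '$ while $\Psi_1>0$ in $\gO '$. For iii): the hypothesis makes $\gl=0$ admissible in Definition \ref{DefinizPrincipaleigenvalue}, hence $\tilde{\gl_1}(\gO ')\ge0$; and if $\tilde{\gl_1}(\gO ')=0$, sliding $t\Psi_1$ under $\Psi$ as in i) --- now invoking Hopf to rule out a first contact on $\de\gO '$ --- forces $\Psi=c\,\Psi_1$.

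For iv) I would simply test \eqref{autofprincipale} against $\Psi_1$, which lies in $\textbf{H}_0^1(\gO ')$ by elliptic regularity. Writing $\Psi_1=(\gps_1,\dots,\gps_m)$ and symmetrizing the double sum in $i,j$, one has $\sum_{i,j}d_{ij}\gps_i\gps_j=\sum_{i,j}c_{ij}\gps_i\gps_j$, so the resulting quadratic form is exactly $Q(\Psi_1)=B^{\text{s}}(\Psi_1,\Psi_1)$, whence
$$\tilde{\gl_1}(\gO ')\,(\Psi_1,\Psi_1)_{\textbf{L}^2(\gO ')} = Q(\Psi_1) \geq \gl_1^{\text{(s)}}(\gO ')\,(\Psi_1,\Psi_1)_{\textbf{L}^2(\gO ')}$$
by the variational characterization of the first eigenvalue in Theorem \ref{Xvarformautov} i), which is the asserted inequality. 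If equality holds then $R(\Psi_1)=\gl_1^{\text{(s)}}(\gO ')$, so $\Psi_1$ is a first eigenfunction of $-\gD+C$ by Theorem \ref{Xvarformautov} iii); conversely, if $\Psi_1$ is such an eigenfunction, testing both equations against it gives $Q(\Psi_1)=\tilde{\gl_1}(\gO ')\|\Psi_1\|^2=\gl_1^{\text{(s)}}(\gO ')\|\Psi_1\|^2$. In the equality case, subtracting $-\gD\Psi_1+C\Psi_1=\gl_1^{\text{(s)}}\Psi_1$ from $-\gD\Psi_1+D\Psi_1=\tilde{\gl_1}\Psi_1$ gives $C(x)\Psi_1=D(x)\Psi_1$ a.e.; for $m=2$, the first component of this identity, combined with $c_{11}=d_{11}$ and $\gps_2>0$, yields $c_{12}=d_{12}$, hence $d_{12}=d_{21}$ since $c_{12}=\frac12(d_{12}+d_{21})$.

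The step I expect to be the main obstacle is i): making the Krein--Rutman argument rigorous for a fully coupled cooperative system on a general open subset $\gO '$ --- proving strong positivity of the resolvent (exactly where full coupling, not merely cooperativity, is used, through the strong maximum principle and Hopf's lemma for fully coupled systems) and reconciling the operator-theoretic eigenvalue with the supremum of Definition \ref{DefinizPrincipaleigenvalue} when $\de\gO '$ is irregular, which is what forces the approximation-by-subdomains argument. Once i) and Hopf's lemma are in hand, items ii)--iii) are routine applications of the sliding method and iv) is a short computation.
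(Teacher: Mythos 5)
Your proof takes essentially the same route as the paper: for items i)--iii) the paper defers entirely to \cite{BuSi2}, and your Krein--Rutman plus Berestycki--Nirenberg--Varadhan sliding sketch (with approximation by smooth subdomains for irregular $\gO '$) is precisely the strategy carried out there. For iv) the paper's argument is the one you give --- test \eqref{autofprincipale} against $\Psi_1$, use the coincidence of the quadratic forms of $-\gD+D$ and $-\gD+C$ together with the variational characterization and the minimizer characterization from Theorem \ref{Xvarformautov} i) and iii), and in the equality case subtract the two eigenvalue equations to get $D(x)\Psi_1=C(x)\Psi_1$ and hence $d_{12}=d_{21}$ when $m=2$ by positivity of the components of $\Psi_1$.
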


 \subsection{Comparison principles for semilinear elliptic systems} \ \\
 Let us  consider a semilinear elliptic system 
 of the type
 
\begin{equation}  \label{X-semilinear system}
\begin{cases} - \gD U =F(x,U) \quad &\text{in } \gO \\
U= 0 \quad & \text{on } \de  \gO
\end{cases}
\end{equation}
i.e.
$$
\begin{cases}  
 - \gD u_1  = f_1(x,u_1,\dots , u_m) & \text{ in } \gO  \\
  \dots  \dots & \dots \\ 
 - \gD u_m = f_m(x,u_1,\dots , u_m)   & \text{ in } \gO \\
 u_1=  \dots = u_m  =0 & \text { on } \de \gO 
\end{cases}
$$
for the unknown vector valued function $U=(u_1, \dots ,u_m): \Omega  \to \R^m$,    where $\gO $ is a bounded domain in $\R^N $ and  $F=(f_1, \dots , f_m ): \ov {\gO}\times \R^m \to \R^m $  is a $C^{1 }$ function.\par
\smallskip

 A weak solution of \eqref{X-semilinear system} is a function 
 $U \in \Huno $ such that the function 
 $x \mapsto F(x,U(x))$ belongs to $\textbf{L}^q (\Omega )$, with $q>1 $ if $N=2$, $q= \frac {2N} {N+2} $ if $N \geq 3$  (note that $\frac {2N} {N+2}$ is the conjugate exponent of the critical Sobolev exponent $2^*= \frac {2N}{N-2}$) and 
 \be \label{X-WeakSolution}
 \int _{\Omega } \nabla U \, \cdot \, \nabla  \Phi \, dx = \int _{\Omega } F(x,U) \,  \cdot \,  \Phi \, dx  \quad \forall \; \Phi \in \Huno
 \ee
  If $U,V \in \textbf{H}^1(\Omega )$ we write $U \leq V $ on $\partial \Omega $, if the difference $U-V$ weakly satisfies the inequality $U- V \leq 0 $ on $\partial \Omega $, i.e.  if $(U-V)^+  \in \Huno $. \par
 Moreover we say that $U$ satisfies in a weak sense the inequality 
 \be \label{X-SemilinearEllIneqn}
- \Delta U \geq \; \; (\, \leq \,) F(x,U) \quad \text{in } \Omega
\ee
if for any $i=1, \dots ,m $ 
the component $u_i$ of $U$ weakly satisfies   
$$
- \Delta u_i \geq \; \; (\, \leq \,) f_i(x,U) \quad \text{in } \Omega \; ,  \; \;  \text{ i.e. } 
$$ 
 \be \label{X-WeakIneq}
 \int _{\Omega } \nabla u_i \, \cdot \nabla  \gf \, dx \geq \; \; (\, \leq \,) \quad  \int _{\Omega } f_i(x,U) \, \gf \, dx  
 \ee
 for any $\gf \in H_0^1 (\Omega )$ with $\gf \geq 0 $ in $\Omega $.
 This is equivalent to require that
 \be \label{XX-WeakIneq}
 \int _{\Omega } \nabla U \, \cdot \nabla  \Phi \, dx \geq \; \; (\, \leq \,) \quad  \int _{\Omega } F(x,U) \, \cdot  \Phi \, dx  
 \ee
 for any $\Phi \in \Huno $ with $\Phi \geq 0 $ in $\Omega $.
 \par
 \medskip

\begin{definition} \label{X-semilinearicoupled} We say that the system \eqref{X-semilinear system} is
\par
\smallskip
\begin{itemize}
\item  \textbf{cooperative or weakly coupled} in an open set $\gO ' \subseteq \gO $ if 
\be \label{X-EqSemilCoupled}  \frac {\de f _i} {\de s _j} (x, s_1, \dots , s_m) \geq 0 \quad \text{ for every } (x,s_1,\dots ,s_m ) \in  \gO ' \times \R^m
\ee
and every  $i,j=1,\dots ,m$ with $i \neq j$.
 
 \item    \textbf{fully  coupled} in an open set  $\gO ' \subseteq \gO $ along  $U \in \Huno \cap C^0(\Omega ; \R^m)$   if it is cooperative in $\gO '$ and 
    in addition $\forall I,J \subset \{1, \dots ,m \}$ such that $I \neq \emptyset $, $J \neq \emptyset $, $I \cap J = \emptyset $, $I \cup J = \{1, \dots ,m \} $ there exist $i_0 \in I$, $j_0 \in J $ such that 
\be \label{X-EqSemilFullyCoupled} \text{meas }(\{ x \in \gO ' :   \frac {\de f _{i_0}} {\de s _{j_0}} (x, U(x)) > 0 \}) >0
\ee
 \end{itemize}
\end{definition}
\par
\medskip

As a consequence of Theorem \ref{WeakMaximumSistemi} and Theorem  \ref{X-SMP} the following comparison principles hold  (see \cite{DaPaBook} for the proof).

\begin{theorem}[Weak comparison principle in small domains for systems] \label{X-confrontodebole}
Let  $\Omega  $ be a domain in $\R^N $,  $F: \Omega  \times \R ^m  \to \R ^m $   a $C^1$ function and assume that  \eqref{X-EqSemilCoupled} holds. Let    
$A >0 $ and 
$U,V \in \textbf{H}^1 (\gO ) \cap L^{\infty}(\Omega )$  such that 
$$
 \Vert U \Vert  _{ L^{\infty} (\Omega ) }   \leq A  \; , \;  \Vert   V \Vert _{ L^{\infty} (\Omega ) }   \leq A 
 $$
 Then there exists $\delta >0 $, depending on $F$ and $A$   such that the following holds:\par 
\noi if $\Omega ' \subseteq \Omega  $ is a bounded subdomain of $\Omega $,  \ $\text{meas}_N \; ([u>v] \cap \Omega ' ) < \delta $ and 
\begin{equation} \label{X-comparison} 
\begin{cases} - \gD U \leq F(x,U) \;, \;  - \gD V \geq F(x,V) \quad &\text{ in } \gO '\\
U \leq V \quad & \text{on }  \partial \Omega '    
\end{cases}
\end{equation}
  then $U \leq V $ in $\Omega ' $.
 \end{theorem}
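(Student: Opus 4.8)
The plan is to reduce the comparison principle for the system to an application of the weak maximum principle in small domains, Theorem \ref{WeakMaximumSistemi} ii). First I would set $W = U - V = (w_1, \dots, w_m)$ and observe that by hypothesis \eqref{X-comparison} we have $W \leq 0$ on $\partial \Omega'$ in the weak sense, i.e.\ $W^+ \in \Huno$ restricted to $\Omega'$. The goal is to show $W^+ \equiv 0$ in $\Omega'$, that is $W \leq 0$ in $\Omega'$.

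Next I would linearize the difference of the two inequalities. For each $i$, using the weak formulation \eqref{X-WeakIneq}, the function $w_i$ satisfies $-\gD w_i \leq f_i(x,U) - f_i(x,V)$ weakly in $\Omega'$. Writing the difference $f_i(x,U) - f_i(x,V)$ along the segment joining $V(x)$ to $U(x)$ via the mean value theorem (applied componentwise, or via the fundamental theorem of calculus $f_i(x,U)-f_i(x,V) = \int_0^1 \sum_j \frac{\de f_i}{\de s_j}(x, V + t(U-V))\, dt \; (u_j - v_j)$), one gets $-\gD w_i \leq -\sum_{j=1}^m d_{ij}(x) w_j$ weakly, where $d_{ij}(x) = -\int_0^1 \frac{\de f_i}{\de s_j}(x, V(x) + t(U(x)-V(x)))\, dt$. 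Since $\|U\|_{\Linf}, \|V\|_{\Linf} \leq A$ and $F \in C^1$, the segment stays in a fixed compact set of $\Rm$, so each $d_{ij}$ is bounded in $\Linf$; hypothesis \eqref{X-EqSemilCoupled} gives $\frac{\de f_i}{\de s_j} \geq 0$ for $i \neq j$, hence $d_{ij} \leq 0$ for $i \neq j$, so the matrix $D = (d_{ij})$ satisfies \eqref{ipotesiD} and \eqref{weaklycoupled}. Thus $-\gD W + D(x) W \leq 0$ in $\Omega'$ in the weak sense, with $W \leq 0$ on $\partial\Omega'$.

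Then I would apply Theorem \ref{WeakMaximumSistemi} ii): there is a $\gd > 0$, depending only on the matrix $D$ — and since the bound on $D$ depends only on $F$ and $A$, uniformly on such $D$ — such that whenever $|\Omega'| \leq \gd$ the maximum principle holds for $-\gD + D$ in $\Omega'$, giving $W \leq 0$, i.e.\ $U \leq V$, in $\Omega'$. A small technical point is that the smallness is required on $\text{meas}_N([u>v]\cap \Omega')$ rather than on $|\Omega'|$ itself; this is handled by noting that on the set $[U \leq V]$ the positive part $W^+$ already vanishes, so one may restrict attention to the open set $\Omega'' = \Omega' \cap \{$ some $w_i > 0\}$ (or argue directly that the test function $W^+$ is supported there), and apply the small-measure maximum principle on that set — whose measure is at most $\text{meas}_N([u>v]\cap\Omega')$. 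The main obstacle, and the only place requiring care, is precisely this localization of the positive part together with checking that the linearizing matrix $D$ genuinely lies in $\Linf$ with a bound depending only on $F$ and $A$ (so that the threshold $\gd$ is uniform); both follow from the $C^1$ regularity of $F$ and the a priori $L^\infty$ bound, so no real difficulty arises beyond bookkeeping.
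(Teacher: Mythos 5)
Your proof is correct and follows the paper's own argument essentially verbatim: linearize $F(x,U)-F(x,V)$ along the segment $tU+(1-t)V$ to obtain a cooperative linear inequality $-\gD W + D(x)W \leq 0$ for $W=U-V$ with $D\in\Linf$ bounded in terms of $F$ and $A$, and then invoke the small-measure weak maximum principle (Theorem \ref{WeakMaximumSistemi}~ii)). The only place your write-up goes beyond the paper is the explicit remark that $W^{+}$ is supported on $[U\not\le V]\cap\Omega'$, so the quadratic-form/small-measure argument can be run there rather than on all of $\Omega'$; this is the right way to reconcile the hypothesis $\text{meas}_N([u>v]\cap\Omega')<\gd$ with the $|\Omega'|\le\gd$ form of the cited theorem, and is a detail the paper leaves implicit.
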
 

\begin{theorem}[Strong Comparison Principle for   systems] \label{X-Strong Comparison Principle}
Let $\Omega $  be a (bounded or unbounded) domain in $\R^N$, and let  $U,V  \in   C^1 ( \Omega ) $ weakly   satisfy  
\begin{equation} \label{X-SCP} 
\begin{cases}  - \gD U   \leq F(x,U) \quad ; \quad  - \gD v   \geq F(x,V) \quad &\text{in } \gO \\
U \leq V  \quad & \text{in } \gO \\
\end{cases}
\end{equation}
where $F(x,U): \Omega \times \R^m \to \R^m $ is a $C^1 $ function and \eqref{X-EqSemilCoupled} holds.  
\begin{enumerate}
\item For every $i \in \{ 1, \dots ,m \}$ the following holds:   either $ u_i \equiv v_i $ in $\Omega $ or  $u_i< v_i $ in $\Omega $ and, in the latter case, if
$u_i, v_i  \in   C^1 (\ov {  \Omega } ) $, 
   $u_i(x_0)=v_i(x_0 )$ at a point  $x_0 \in \partial \Omega $  where  the interior sphere condition is satisfied 
then $  \frac{\de u_i}{\de s} (x_0)< \frac{\de v_i}{\de s} (x_0)  $  for any inward
directional derivative.
\item If moreover $U \in C^1 (\Omega ; \R^m )$ is a solution of   \eqref{X-semilinear system} and the system is fully coupled along $U$ in $\Omega $ (i.e. also \eqref{X-EqSemilFullyCoupled} with $\Omega ' = \Omega$ holds)
then either $U \equiv V $ in $\Omega$ or $U<V $ in $\Omega $ (i.e. the same alternative holds for any component $u_i$).
In the latter case assume that $U,V  \in   C^1 ( \ov { \Omega } ) $ and
  let $x_0 \in \partial \Omega $ a point where    $U(x_0)=V(x_0 )$ and the interior sphere condition is satisfied.
Then $  \frac{\de U}{\de s} (x_0)< \frac{\de V}{\de s} (x_0)  $  for any inward
directional derivative.
\end{enumerate}
\end{theorem}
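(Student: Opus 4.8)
The plan is to reduce the whole statement to the scalar strong maximum principle and Hopf lemma, i.e.\ to the case $m=1$ of Theorem~\ref{X-SMP}, applied componentwise to the difference $W=V-U=(w_1,\dots,w_m)\ge 0$. First I would linearize the difference of the right-hand sides by the fundamental theorem of calculus: for each $i=1,\dots,m$,
\[
f_i(x,V(x))-f_i(x,U(x))=\sum_{j=1}^m c_{ij}(x)\,w_j(x),\qquad
c_{ij}(x)=\int_0^1 \frac{\de f_i}{\de s_j}\bigl(x,\,U(x)+t\,W(x)\bigr)\,dt .
\]
Because $U,V\in C^1(\gO)$ the coefficients $c_{ij}$ belong to $L^\infty_{\mathrm{loc}}(\gO)$, and the cooperativity hypothesis \eqref{X-EqSemilCoupled} forces $c_{ij}\ge 0$ whenever $i\ne j$. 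Subtracting the two differential inequalities in \eqref{X-SCP} in weak form and moving the diagonal term to the left, $w_i$ weakly satisfies
\[
-\gD w_i-c_{ii}(x)\,w_i\ \ge\ \sum_{j\ne i}c_{ij}(x)\,w_j\ \ge\ 0\quad\text{in }\gO,\qquad w_i\ge 0\ \text{in }\gO .
\]

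For part~(1) I would then invoke the $m=1$ case of Theorem~\ref{X-SMP} on each ball $B$ compactly contained in $\gO$, where $c_{ii}$ is bounded, and propagate the resulting dichotomy by connectedness of $\gO$: for every $i$, either $w_i\equiv 0$, i.e.\ $u_i\equiv v_i$, or $w_i>0$ throughout $\gO$. In the second case, if in addition $u_i,v_i\in C^1(\ov{\gO})$ and $w_i(x_0)=0$ at a point $x_0\in\de\gO$ satisfying the interior sphere condition, Hopf's lemma (again the scalar part of Theorem~\ref{X-SMP}, applied on the interior touching ball) gives $\frac{\de w_i}{\de\gn}(x_0)<0$ for the outer normal $\gn$, which is exactly $\frac{\de u_i}{\de s}(x_0)<\frac{\de v_i}{\de s}(x_0)$ for every inward directional derivative.

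For part~(2) suppose $U$ solves \eqref{X-semilinear system}, the system is fully coupled along $U$, and $U\not\equiv V$. Put $J=\{\,j:\ w_j>0\text{ in }\gO\,\}$ and $I=\{1,\dots,m\}\setminus J$; by part~(1), $w_i\equiv 0$ for $i\in I$, while $J\ne\emptyset$ since $U\not\equiv V$. Assume, for contradiction, $I\ne\emptyset$, and choose $i_0\in I$, $j_0\in J$ as in \eqref{X-EqSemilFullyCoupled}. Because $U$ is a genuine solution, the $i_0$-th inequality collapses to
\[
0=-\gD w_{i_0}\ \ge\ f_{i_0}(x,V)-f_{i_0}(x,U)=\sum_{j\in J}c_{i_0 j}(x)\,w_j(x)\ \ge\ 0\qquad\text{weakly in }\gO ,
\]
where the terms with $j\in I$ drop because $w_j\equiv 0$, including the diagonal one since $i_0\in I$. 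Hence every term vanishes a.e., and since $w_{j_0}>0$ in $\gO$ we get $c_{i_0 j_0}=0$ a.e.; as the integrand defining $c_{i_0 j_0}$ is nonnegative and continuous in $t$, it vanishes for all $t\in[0,1]$, in particular at $t=0$, so $\frac{\de f_{i_0}}{\de s_{j_0}}(x,U(x))=0$ for a.e.\ $x\in\gO$, contradicting \eqref{X-EqSemilFullyCoupled}. Therefore $I=\emptyset$, i.e.\ $U<V$ in $\gO$, and the boundary derivative statement follows exactly as in part~(1).

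The step I expect to be the main obstacle is this last one: the full-coupling hypothesis is formulated in terms of the derivatives of $F$ evaluated at $U$, whereas the linearization of the difference equation naturally carries coefficients evaluated at the intermediate states $U+tW$, and there is no reason these should agree. The way around it is to use crucially that $U$ is an actual solution, not merely a subsolution, so that on the set where a component $w_{i_0}$ vanishes identically one gets $-\gD w_{i_0}=0$, which pins the relevant off-diagonal derivative down to zero at $U$ itself. A minor technical wrinkle is that Theorem~\ref{X-SMP} is stated for bounded coefficients, so the dichotomy and the Hopf estimate must first be obtained on balls compactly contained in $\gO$ (and on the interior touching ball for the boundary point) and then read off globally.
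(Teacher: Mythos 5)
Your proof is correct, but you take a somewhat more roundabout route than the paper's, and your diagnosis of where the difficulty lies is slightly off.

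The paper writes $W=U-V$, forms exactly the same integrated coefficients $b_{ij}(x)=-\int_0^1 \frac{\de f_i}{\de s_j}(x,tU+(1-t)V)\,dt$, and observes two things in one breath: cooperativity of $F$ gives $b_{ij}\le 0$ off the diagonal, and, more importantly, \emph{full coupling of $F$ along $U$ passes directly to the matrix $B$}. The latter is a one-line continuity observation: if $\frac{\de f_{i_0}}{\de s_{j_0}}(x,U(x))>0$ at some $x$, then the nonnegative integrand is strictly positive near the endpoint $t$ corresponding to $U$, so the integral is positive and $b_{i_0 j_0}(x)<0$. Hence $\{b_{i_0j_0}<0\}$ has positive measure whenever $\{\frac{\de f_{i_0}}{\de s_{j_0}}(\cdot,U)>0\}$ does, and $B$ is fully coupled. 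At that point both parts of the statement are literally an application of Theorem~\ref{X-SMP} to $W$, with the componentwise dichotomy and Hopf estimate in part (1) and the uniform alternative in part (2) read off simultaneously. Your proof instead re-derives part (1) from the scalar maximum principle ball by ball, and for part (2) runs an explicit contradiction argument with the index sets $I,J$. Both are valid; the paper's version is shorter because it front-loads the observation that the linearized cooperative matrix inherits full coupling, so it never needs to reopen the integral in the contradiction step.

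The point you flag as the ``main obstacle'' --- that full coupling is evaluated at $U$ while the linearized coefficients live at intermediate states --- is the right thing to worry about, but the way you resolve it is not quite what you think. You claim it is ``crucial'' that $U$ is a genuine solution so that $-\gD w_{i_0}=0$; in fact $-\gD w_{i_0}=0$ simply because $w_{i_0}\equiv 0$, and the chain $0\ge f_{i_0}(x,V)-f_{i_0}(x,U)=\sum_{j\in J}c_{i_0j}w_j\ge 0$ follows from $U$ being a subsolution and $V$ a supersolution alone. Neither your argument nor the paper's actually exploits that $U$ solves the system (the hypothesis is there because the notion ``fully coupled along $U$'' is formulated that way). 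The genuine resolution is the continuity-in-$t$ argument, which you do invoke at the very end when you pass from $c_{i_0j_0}=0$ to $\frac{\de f_{i_0}}{\de s_{j_0}}(\cdot,U)=0$; the paper just uses that same observation up front, before invoking the strong maximum principle, which shortcuts the whole contradiction.
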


  \subsection{Morse index of a solution}
  \begin{definition} \label{MorseIndex} \ 
\begin{itemize}
\item [i)]
Let $U \in \Huno \cap \Linf$ be a  weak solution of \eqref{genprobsys}. We say that $U$ is linearized stable (or  has zero Morse index) if the quadratic form 
\begin{equation} \label{Xformaquadraticalinearizzato} 
\begin{split}
Q_U (\Psi ; \gO ) &= \int _{\gO} \left [ |\nabla \Psi |^2 - J_F (x, U(x))(\Psi ,\Psi )\right ] dx= \\
& \int _{\gO}\left [   \sum _{i=1}^m |\nabla \psi _i |^2 -\sum _{i,j=1}^m \frac {\de f_i}{\de s_j}(x, U(x)) \psi _i \psi _j \right ]  \, dx \geq 0
\end{split}
\end{equation}
for any $\Psi =(\psi _1, \dots , \psi _m) \in C_c^1 (\gO;\R^m)$ where $J_F (x,U(x))$ is the jacobian matrix of $F(x,S)$ with respect to the variables $S=(s_1, \dots , s_m)$ computed at $S=U(x)$.
\item [ii)] $U$ has (linearized) Morse index  equal to the integer $m=m (U)\geq 1$ if $m $ is the maximal dimension of a subspace of  $ C_c^1 (\gO;\R^m)$ where the quadratic form is negative definite.
\item [iii)] $U$ has infinite (linearized) Morse index  if for any integer $k$ there exists  a $k$-dimensional subspace of  $ C_c^1 (\gO;\R^m)$ where the quadratic form is negative definite.
\end{itemize}
\end{definition}

The crucial, simple remark that  allowed  to extend some of the symmetry results known for equations to the case of systems in \cite{DaPaSys} and \cite{DaGlPa1}, is that  the quadratic form associated   
to the linearized operator at a solution $U$, i.e. to the linear  operator 
\be \label{linearizedoperator} L_U (V) = - \gD V - J_F (x, U) V 
\ee 
 which in general is not selfadjoint, coincides with the quadratic form corresponding to  the  selfadjoint operator 
 \be \label{symmetriclinearizedoperator}L^s_U (V) = - \gD V - \frac 12 \left (J_F (x, U) + J_F ^{\text{t}} (x, U)  \right )  V 
\ee 
where $J_F ^{\text{t}}$ is the transpose of the matrix $J_F $. \\
   Therefore the \emph{symmetric eigenvalues} of $L$,  i.e. the eigenvalues of $L^s_U$, as defined in Section 2.2 can be exploited to study the symmetry of the solution $U$, using the information on its Morse index. \par
 As in section 2.2 we denote by  $\gl _k ^{\text{s}} = \gl _k (- \Delta  - \frac 12 \left (J_F (x, U) + J_F ^{\text{t}} (x, U)  \right ) ; \Omega )$ and $W^k$,  $k \in \N^+$,  the \emph{symmetric} eigenvalues and eigenfunctions of $L_U  = - \gD V - J_F (x, U)  $ in an open set $\Omega $.
     Then we have 
\par
\smallskip

\begin{proposition}\label{X-CarattIndiceMorse} Let $\gO $ be a bounded domain in $\R^N$. Then the Morse index of a solution $U$ to \eqref{X-semilinear system} equals the  number of negative \emph{symmetric} eigenvalues of the linearized operator $L_U$.
\end{proposition}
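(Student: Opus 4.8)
The plan is to establish the equality of two quantities: the Morse index $m(U)$ as defined via the quadratic form $Q_U$ in Definition \ref{MorseIndex}, and the number of negative symmetric eigenvalues $\gl_k^{\text{s}}$ of $L_U$. The bridge between them is the observation, already stressed in the text, that $Q_U$ is exactly the quadratic form associated to the \emph{symmetric} operator $L_U^{\text{s}} = -\gD - \frac12(J_F + J_F^{\text{t}})$, so that $Q_U(\Psi;\gO) = B^{\text{s}}(\Psi,\Psi)$ where $B^{\text{s}}$ is the bilinear form in \eqref{Xbilinearform} with $C = -\frac12(J_F(x,U)+J_F^{\text{t}}(x,U))$. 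Thus the whole statement is a purely spectral fact about the selfadjoint operator $L_U^{\text{s}}$ with Dirichlet conditions, and has nothing to do with cooperativity: it is the standard principle that for a selfadjoint operator with discrete spectrum the maximal dimension of a subspace on which the quadratic form is negative definite equals the number of strictly negative eigenvalues (counted with multiplicity).

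First I would let $N^-$ denote the number of negative eigenvalues $\gl_1^{\text{s}} \leq \dots \leq \gl_{N^-}^{\text{s}} < 0 \leq \gl_{N^-+1}^{\text{s}} \leq \dots$, with corresponding $\Ldue$-orthonormal eigenfunctions $W^1,\dots$. For the lower bound $m(U) \geq N^-$: the span $V := \mathrm{span}(W^1,\dots,W^{N^-})$ is $N^-$-dimensional, and for $0 \neq \Psi = \sum_{k=1}^{N^-} a_k W^k$ one has $Q_U(\Psi) = B^{\text{s}}(\Psi,\Psi) = \sum_k \gl_k^{\text{s}} a_k^2 < 0$, using that the eigenfunctions are $B^{\text{s}}$-orthogonal and the eigenvalues used are all negative. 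This shows $Q_U$ is negative definite on an $N^-$-dimensional subspace of $\Huno$; since by Theorem \ref{Xvarformautov}(v) the eigenvalues (and hence the relevant Rayleigh-quotient infima) are also computed over $X = (C_c^1(\gO))^m$, a density/approximation argument lets me replace $V$ by an $N^-$-dimensional subspace of $C_c^1(\gO;\R^m)$ on which $Q_U$ is still negative definite, giving $m(U) \geq N^-$.

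For the reverse inequality $m(U) \leq N^-$: suppose $W \subseteq C_c^1(\gO;\R^m) \subseteq \Huno$ is a subspace of dimension $d$ on which $Q_U$ is negative definite. If $d > N^-$, then $W$ has nontrivial intersection with the $\Ldue$-orthogonal complement of $\mathrm{span}(W^1,\dots,W^{N^-})$ (a codimension-$N^-$ condition inside a $d$-dimensional space), so there is $0 \neq \Psi \in W$ with $\Psi \perp W^1,\dots,W^{N^-}$ in $\Ldue$; by the variational characterization in Theorem \ref{Xvarformautov}(ii) this forces $R(\Psi) \geq \gl_{N^-+1}^{\text{s}} \geq 0$, i.e. $Q_U(\Psi) \geq 0$, contradicting negative definiteness on $W$. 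Hence $d \leq N^-$, so $m(U) \leq N^-$. Combining the two inequalities gives $m(U) = N^-$. I would also dispatch the degenerate cases briefly: if $Q_U \geq 0$ on all of $C_c^1(\gO;\R^m)$ then $\gl_1^{\text{s}} \geq 0$ so $N^- = 0 = m(U)$ (this is exactly Definition \ref{MorseIndex}(i)), and if $m(U) = +\infty$ then by the first inequality $N^- \geq k$ for every $k$, which is incompatible with $\gl_j^{\text{s}} \to +\infty$, so this case does not arise for a genuine solution with the eigenvalue structure of Theorem \ref{Xvarformautov}.

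The only real subtlety — and the step I expect to require the most care — is the passage between test functions in $C_c^1(\gO;\R^m)$ and functions in $\Huno$: the definition of $m(U)$ uses the former while the spectral theory of Theorem \ref{Xvarformautov} lives in the latter. For the lower bound this is handled by approximating the eigenfunctions in $\Huno$-norm by compactly supported ones, using continuity of $Q_U$ on $\Huno$ (the coefficients $\frac{\de f_i}{\de s_j}(x,U(x))$ are bounded since $U \in C^2(\ov\gO;\R^m)$), so that negative definiteness on a finite-dimensional subspace is preserved under small perturbation. For the upper bound no approximation is needed, since $C_c^1 \subset \Huno$ and the $\Ldue$-orthogonality argument applies verbatim. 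Everything else is routine linear algebra on the spectral decomposition of $L_U^{\text{s}}$.
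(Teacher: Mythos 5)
Your proof is correct and takes essentially the same route as the paper: both reduce the statement to a purely spectral fact about the selfadjoint operator $L_U^{\text{s}}$ and compare the maximal dimension of negative-definiteness with the count of negative eigenvalues via the variational/min-max characterization of Theorem \ref{Xvarformautov}. The paper is more terse—it invokes the min-max formula in item ii) for the inequality $\mu(U)\geq m(U)$ and the $C_c^1$-characterization in item v) for $m(U)\geq \mu(U)$, rather than re-deriving the $\Ldue$-orthogonality argument and spelling out the density step as you do—but the mathematical content is identical.
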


\begin{proof} 
 Let us denote by $\mu (U)$ the number of negative symmetric eigenvalues of $L_U$.
 If the quadratic form $Q_U$ defined in \eqref{Xformaquadraticalinearizzato}  is negative definite on a $k$-dimensional supspace of $C_c^1(\Omega\cup\Gamma)$, then, by \textit{iii}) of Theorem \ref{Xvarformautov}   we have that the $k$-th eigenvalue $ \gl _k (- \Delta +C ; \Omega )$   is negative. Hence $\mu(u)\ge m(u)$.
On the other hand if there are $k$ negative symmetric eigenvalues,   by \textit {v}) of Theorem \ref{Xvarformautov}  there is a $k$-dimensional supspace of $C_c^1(\Omega\cup\Gamma)$ where the quadratic form $Q_u$ is negative definite, hence  $ m(u) \geq  \mu(u)$.
   \end{proof}
 \par

 \section{Proof of the symmetry results}
  \subsection{On the $k$-sectional foliated Schwarz symmetry} \ \\
  From now on  we will consider the case of system \eqref{modprob}  in a  bounded  $k$-rotationally symmetric domain.
   Let us fix  some notations.  \\
   For a unit vector $e \in S^{N-1}$ we consider the hyperplane 
$$ H(e)= \{x \in \R^N\::\: x \cdot e= 0\}
$$
orthogonal to the direction $e$ 
 and the open half domain 
 $$\Omega (e)= \{x \in \Omega \::\: x \cdot e>0\}
 $$ 
 We then set  
 $$ \sigma_e(x) =x -2(x \cdot e)e \; , \;  x \in \Omega \, , 
 $$ 
i.e.   $\sigma_e: \Omega  \to \Omega $ is the \emph{reflection with respect to the hyperplane}
$H(e)$. 
Finally if $U: \Omega \to \R ^m $ is a continuous function we define the \emph{reflected function}
$U^{\gs (e)}: \Omega \to \R ^m $ defined by 
$$  U^{\gs (e)} (x) = U(\gs_e(x))
$$

\par
\smallskip
We will use in the sequel  the   rotating plane method, a variant of the moving plane method in the version of Berestycki and Nirenberg (see \cite{BeNi}),    exploited e.g. in   \cite{PaWe} and subsequent papers on foliated Schwarz symmetry of solutions of equations. 

 \begin{theorem}[Rotating Planes method for systems ] \label{Rotating Planes Systems}
 Let $\Omega $ be a bounded  $k$-rotationally symmetric domain in $\R^N$,  $F \in C^1 (\, [0, \infty ) \times \R^{N-k} \times \R ^m \, ; \, \R ^m \, ) $ and 
 $U \in \Huno \cap C^0(\overline{\Omega}; \R ^m )$  a weak solution of \eqref{modprob}. Assume that the system \eqref{modprob} is fully coupled along $U$ in $\gO $ and there exists a direction 
 $  e_{\gth _0}= (\cos (\gth _0 ), \sin (\gth _0),0, \dots ,0 ) $   such that 
$$
U< U^{\gs ( e_{\gth _0} )}  \; \text{ in }  \; \gO ( e_{\gth _0} )
$$
 Then there exists a direction  $ e_{\gth _1}= (\cos (\gth _1 ), \sin (\gth _1),0, \dots ,0 )$, with $\gth _1 > \gth _0$, such that 
$$
U \equiv U^{\gs (e_{\gth _1})}  \quad \text{ in }   \;  \Omega (e_{\gth _1}) 
$$
and 
$$  U < U^{\gs (e_{\gth })} \quad  \text{ in } \; \Omega (e_{\gth })\; \; \forall \;    \gth \in (\gth _0, \gth _1)
$$
\end{theorem}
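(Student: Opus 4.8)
The plan is to run the rotating plane method, using the angle $\gth$ as the moving parameter in place of the usual translation parameter. For $\gth \in [\gth_0, \gth_0 + \pi]$ write $e_\gth = (\cos\gth, \sin\gth, 0, \dots, 0)$ and consider the statement
$$
P(\gth): \quad U \le U^{\gs(e_\gth)} \ \text{ in } \ \Omega(e_\gth), \qquad U \not\equiv U^{\gs(e_\gth)} \ \text{ in } \ \Omega(e_\gth).
$$
Because $\Omega$ is $k$-rotationally symmetric, each half-domain $\Omega(e_\gth)$ is mapped onto $\Omega \setminus \overline{\Omega(e_\gth)}$ by $\gs_{e_\gth}$ (the reflection fixes the $x''$ variables and acts as a planar reflection on $x'$), and since $F$ depends on $x'$ only through $|x'|$, the reflected function $W := U^{\gs(e_\gth)}$ solves the same system $-\gD W = F(|x'|,x'',W)$ in $\Omega(e_\gth)$, with $W = U$ on $\partial\Omega(e_\gth)\cap\Omega$ and $W \ge 0 = U$ on the rest of the boundary. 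Set $\Theta = \{\gth \in (\gth_0, \gth_0+\pi] : P(\gth') \text{ holds for all } \gth' \in (\gth_0,\gth]\}$. By hypothesis $P(\gth_0)$ holds with strict inequality, so by continuity (and the weak comparison principle in small domains, Theorem \ref{X-confrontodebole}, applied near the endpoints where $|\Omega(e_\gth) \setminus \Omega(e_{\gth_0})|$ is small) the set $\Theta$ is nonempty; let $\gth_1 = \sup\Theta$.

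The first step is to show that for every $\gth$ with $P(\gth)$ holding, in fact the \emph{strict} inequality $U < U^{\gs(e_\gth)}$ holds throughout $\Omega(e_\gth)$: the difference $W - U$ satisfies a cooperative linear system $-\gD(W-U) + B(x)(W-U) \le 0$ (indeed $=0$) in $\Omega(e_\gth)$ with $W - U \ge 0$, $W - U \not\equiv 0$, so the strong maximum principle and Hopf lemma for fully coupled cooperative systems (Theorem \ref{X-SMP}, via Theorem \ref{X-Strong Comparison Principle}) give $W - U > 0$ in $\Omega(e_\gth)$ — here full coupling of the system along $U$ is exactly what is needed so that the matrix $B(x) = -\int_0^1 J_F(\dots)\,dt$ yields a fully coupled linear system. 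The second step is a continuation argument: given $\gth \in \Theta$ with $\gth < \gth_0+\pi$, I want to show $P(\gth')$ still holds for $\gth'$ slightly larger than $\gth$. Write $\Omega(e_{\gth'}) = K \cup A_{\gth'}$ where $K$ is a compact subset of $\Omega(e_\gth)$ on which, by the previous step, $W_\gth - U \ge c > 0$, and $A_{\gth'}$ has measure $\to 0$ as $\gth' \to \gth$. On $K$, continuity in $\gth'$ of the reflected function keeps $U < U^{\gs(e_{\gth'})}$; on $A_{\gth'}$ one applies the weak comparison principle in small domains (Theorem \ref{X-confrontodebole}) with the boundary inequality $U \le U^{\gs(e_{\gth'})}$ on $\partial A_{\gth'}$ holding because $\partial A_{\gth'}$ lies in $K \cup \partial\Omega(e_{\gth'})\cup(\partial\Omega\cap\overline{\Omega(e_{\gth'})})$, on each piece of which the inequality is already known. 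This forces $\gth' \in \Theta$, so either $\Theta$ reaches $\gth_0+\pi$ or the supremum $\gth_1$ is attained and at $\gth_1$ the non-strict alternative of $P$ fails, i.e. $U \equiv U^{\gs(e_{\gth_1})}$ in $\Omega(e_{\gth_1})$.

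The remaining point is to rule out $\gth_1 = \gth_0 + \pi$ without equality: if $P(\gth)$ held with strict inequality for all $\gth \in (\gth_0, \gth_0+\pi)$, then letting $\gth \uparrow \gth_0+\pi$, the reflection $\gs_{e_{\gth_0+\pi}}$ coincides with $\gs_{e_{\gth_0}}$ up to orientation, which combined with the already-established strict inequality $U < U^{\gs(e_{\gth_0})}$ at the opposite end produces the contradiction $U < U^{\gs(e_{\gth_0})} = U^{\gs(e_{\gth_0+\pi})} > U$ on overlapping regions; more precisely one gets $U(x) < U(\gs_{e_{\gth_0}}(x))$ and $U(\gs_{e_{\gth_0}}(x)) < U(x)$ for points approaching the limiting hyperplane, which is impossible. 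Hence $\gth_1 \in (\gth_0, \gth_0+\pi)$ (so $\gth_1 > \gth_0$ as required), $U \equiv U^{\gs(e_{\gth_1})}$ in $\Omega(e_{\gth_1})$, and by definition of $\gth_1$ together with the strict maximum principle step, $U < U^{\gs(e_\gth)}$ in $\Omega(e_\gth)$ for all $\gth \in (\gth_0, \gth_1)$. I expect the main obstacle to be the bookkeeping in the continuation step — verifying that the boundary inequality $U \le U^{\gs(e_{\gth'})}$ genuinely holds on all of $\partial A_{\gth'}$ (in particular on the portion of $\partial\Omega$ inside the reflected region, which uses both that $\Omega$ is $k$-rotationally symmetric so that $\gs_{e_{\gth'}}$ maps $\Omega(e_{\gth'})$ into $\Omega$, and that $U = 0$ on $\partial\Omega$ while $U^{\gs(e_{\gth'})} \ge 0$ there) — and correctly handling the splitting $\Omega(e_{\gth'}) = K \cup A_{\gth'}$ so that the small-measure hypothesis of Theorem \ref{X-confrontodebole} applies to $A_{\gth'}$ uniformly.
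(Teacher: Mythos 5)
Your proof is correct and follows essentially the same strategy as the paper: define the set of angles where the strict inequality has propagated, show it is open (and nonempty) via the weak comparison principle in small domains plus a compact-set/continuity argument, upgrade non-strict to strict using the strong comparison principle for fully coupled systems, and show the supremum $\gth_1$ must be a symmetry position. Your explicit discussion ruling out $\gth_1 = \gth_0 + \pi$ is a correct (if slightly informally worded) elaboration of a point the paper handles implicitly; otherwise the two arguments coincide.
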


\begin{proof} Let us observe that the functions $U^{\gs (e_{\gth })}$ satisfy the same equation as $U$, namely
$- \Delta U^{\gs (e_{\gth })} = F(|x'|,x'', U^{\gs (e_{\gth })})$ in $\Omega $, and both 
 $\Vert U   \Vert _{ L^{\infty} (\Omega (e_{\gth }))  } $ and  $  \Vert U ^{\gs (e_{\gth })}   \Vert _{ L^{\infty} (\Omega (e_{\gth }))  } $ are bounded by 
 $    \Vert U   \Vert _{L^{\infty}(\Omega )}=: A
 $.\\
 Let  us fix $\gd = \gd (A) $ as in Theorem \ref{X-confrontodebole} and observe that $\gd $ is independent of $\gth $, and the functions  $U, U ^{\gs (e_{\gth })}$ satisfy 
\be  
\begin{cases}
- \gD U = F(|x'|,x'',U)  \quad ; \quad   - \gD U^{\gs (e_{\gth })}   =  F(|x' |,x'',U^{\gs (e_{\gth })}) \; \;    &\text{ in } \;  \Omega (e_{\gth }) \\
U = U^{\gs (e_{\gth })} \quad &\text{on }  \partial  \Omega (e_{\gth })
\end{cases}
\ee
Let us set 
$
\gTh = \{ \gth \geq  \gth _0 : U<  U^{\gs (e_{\gth ' })} \text{ in } \gO ( e_{\gth  '} ) \; \forall \gth ' \in (\gth _0, \gth)\} 
$ and let us  show  that
the set $\gTh $ is nonempty and contains an interval $[\gth _0 , \gth _0 + \gep )$ for $\gep >0 $ sufficiently small.
Indeed we can take a compact set $K \subset \Omega (e_{\gth _0}) $ such that  \ $|\Omega (e_{\gth _0})  \setminus K | \leq \frac {\gd}2$   \ and   
$m= \min _K (U^{\gs ( e_{\gth _0} )} -U)>0 $. By continuity if $\gth $ is close to $ \gth _0 $ we have that
$K \subset \Omega (e_{\gth }) $, 
$(U^{\gs ( e_{\gth } )} -U ) \geq \frac m2 >0 $ in $K$,   $|\Omega (e_{\gth })  \setminus K | \leq  \gd$ and
$(U^{\gs ( e_{\gth } )} -U) \geq 0 $ on $ \partial ( \,  \Omega (e_{\gth }) \setminus K \, )$. 
Then  by the weak comparison principle in small domains (Theorem  \ref{X-confrontodebole}) we get that $U \leq U^{\gs ( e_{\gth } )} $ in $ \Omega ' = \Omega ( e_{\gth } ) \setminus K $ and hence in  $\gO ( e_{\gth } )$. Moreover $U < U^{\gs ( e_{\gth } )} $ in $  \Omega ( e_{\gth } )  $ by the strong comparison principle (Theorem \ref{X-Strong Comparison Principle}). 
So the set $\gTh $ is nonempty, and  is bounded from above by $\gth _0 + \gp $, since, considering the opposite direction, the inequality between $U$ and the reflected function gets reversed. Let us set 
  $\gth _1= \sup \gTh $. \\
  We claim that $U \equiv  U^{\gs ( e _{\gth _1}) } $ in $\gO ( e_{\gth _1} )$. 
  Indeed, if this is not the case, we get  $U < U^{\gs ( e _{\gth _1}) }$ in $\gO ( e_{\gth _1})$ by the strong comparison principle (Theorem \ref{X-Strong Comparison Principle}), since by continuity $U \leq  U^{\gs ( e _{\gth _1}) } $ in $\gO ( e_{\gth _1} )$. Then, using again the weak comparison principle in small domains and the previous technique we  get $U<  U^{\gs ( e _{\gth } )}$ in $\gO ( e_{\gth } )$ for $\gth > \gth _1 $ and close to $\gth _1$, contradicting the definition of $\gth _1$.
\end{proof}

\par
\medskip

  Let us define, with a little abuse of notations,
        \be \label{Definizione di S / k-1} S^{k-1}= \{  e \in S^{N-1}:  e \cdot  e_j =0 \,, \,  j= k+1, \dots , N \}
        \ee
\par

        A sufficient condition for the $k$-sectional foliated Schwarz symmetry is the following.
        \par
        \smallskip
        \begin{proposition}
\label{SuffCondSectFSS1Sistemi} Let $\Omega $ be a   $k$-rotationally symmetric domain   in $\R^N$,   $2 \leq k \leq N$, and 
 $U \in \Huno \cap C^0(\overline{\Omega})$ a weak solution of \eqref{modprob} where $F=F(r,x'',S) \in C^1 ([0, \infty ) \times \Omega '' \times \R ^m ; \R ^m) $. Assume that the system is fully coupled along $U$ in $\gO $ and  that  $ \; \forall \; e \in S^{k-1}$ 
  \be \label{AlternDisugSistemi} \text{ either } U \geq U^{\gs  (e) } \; \text { or } \;  U \leq U^{\gs  (e) } \; \text{ in }  \Omega (e)
 \ee
 Then $U$ is $k$-sectionally foliated Schwarz symmetric.
 \end{proposition}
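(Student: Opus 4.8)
The plan is to reduce matters to the classical foliated Schwarz symmetry criterion, applied separately on each $k$-dimensional section of $\gO$, and then to use full coupling together with the strong comparison principle to force all sections (and all components of $U$) to share one and the same symmetry axis. As a first step, fix $e\in S^{k-1}$. Since $F$ depends on $x'$ only through $|x'|$ and $\gO$ is $k$-rotationally symmetric, hence invariant under $\gs _e$, the reflected function $U^{\gs (e)}$ is again a solution of \eqref{modprob} and $U=U^{\gs (e)}$ on $\de\,\gO (e)$. Arguing exactly as in the proof of Theorem \ref{Rotating Planes Systems} (writing the difference $U^{\gs (e)}-U$ as a linear cooperative, fully coupled system on $\gO (e)$ and applying Theorem \ref{X-Strong Comparison Principle}), the alternative \eqref{AlternDisugSistemi} is upgraded to a \emph{strict} trichotomy: for every $e\in S^{k-1}$ exactly one of $U\equiv U^{\gs (e)}$, $U<U^{\gs (e)}$, $U>U^{\gs (e)}$ holds in $\gO (e)$, where in the last two cases the inequality is strict in \emph{every} component. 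This last point is exactly what full coupling buys through part (2) of Theorem \ref{X-Strong Comparison Principle}. We call $e$ a \emph{symmetry direction} in the first case.

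Next, for $e=(e',0)\in S^{k-1}$, $|e'|=1$, and $h\in\gO ''$, the restriction $x'\mapsto U(x',h)$, a continuous function on the $k$-dimensional ball or annulus $\gO ^h$, inherits from \eqref{AlternDisugSistemi} the analogous alternative relative to the reflection $\gs _{e'}$ of $\R^k$, and the choice of sign is the same for all $h$. Applying this componentwise and invoking the classical sufficient condition for foliated Schwarz symmetry of scalar functions on radial domains (see e.g. \cite{PaWe}, \cite{BaWi}, \cite{WethSurvey}), each component $u_i(\cdot,h)$ is either radial or foliated Schwarz symmetric about some direction $p_i(h)\in S^{k-1}$. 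If all the $u_i(\cdot,h)$ are radial, then $U(x',x'')$ depends on $x'$ only through $|x'|$ and $U$ is trivially $k$-sectionally foliated Schwarz symmetric. Otherwise fix $h_0\in\gO ''$ and an index, say $1$, with $u_1(\cdot,h_0)$ nonradial and foliated Schwarz symmetric about $p:=p_1(h_0)$; replacing $p$ by $-p$ if necessary we may assume $u_1(\cdot,h_0)$ is nonincreasing in the angle to $p$.

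Now take $e=(e',0)\in S^{k-1}$ with $e'\perp p$. Since $u_1(\cdot,h_0)$ is foliated Schwarz symmetric about $p$, it is symmetric with respect to the hyperplane $H(e')$ of $\R^k$ (reflection in $H(e')$ preserves $|x'|$ and the angle to $p$), so the first component of $U^{\gs (e)}-U$ vanishes at every point of the slice $\{(x',h_0):x'\in\gO ^{h_0},\ x'\cdot e'>0\}\subset\gO (e)$. As $\gO (e)$ is connected (by the $k$-rotational symmetry of $\gO$), the componentwise alternative in part (1) of Theorem \ref{X-Strong Comparison Principle} forces that component to vanish identically on $\gO (e)$, and full coupling together with part (2) of the same theorem propagates this to all components, i.e. $U\equiv U^{\gs (e)}$ in $\gO (e)$. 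Hence every section $x'\mapsto U(x',h)$ is symmetric with respect to $H(e')$ for every unit $e'\perp p$, that is, axially symmetric about the $p$-axis. Finally, for $e=(e',0)\in S^{k-1}$ with $e'\cdot p>0$ the section $u_1(\cdot,h_0)$ is not symmetric with respect to $H(e')$, so the trichotomy of the first step selects (say) $U\ge U^{\gs (e)}$ in $\gO (e)$; since this holds for all such $e$ and every section is axially symmetric about $\R p$, a standard computation using reflections in hyperplanes making a prescribed angle with $p$ shows that every section is foliated Schwarz symmetric about $p$, so that $U$ depends only on $x''$, $|x'|$ and $\gth =\arccos(\frac{x'}{|x'|}\cdot p)$ and is nonincreasing in $\gth$, which is Definition \ref{sectionfoliatedSS-Sys}.

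The main obstacle is the step carried out in the third paragraph: we only know symmetry of a \emph{single} component $u_1$ on a \emph{lower-dimensional} slice of $\gO$, and must deduce symmetry of the \emph{whole} vector $U$ on the \emph{full} half-domain $\gO (e)$. This is where the two structural hypotheses are genuinely needed: the componentwise strong comparison principle turns a single interior zero of the first component into its vanishing throughout $\gO (e)$, and full coupling then transfers this to every component. A secondary, purely bookkeeping issue is making the orientations of the sectional axes mutually consistent and extracting the monotonicity in $\gth$; this is handled using the rigidity of the trichotomy of the first step, which is already determined by the behaviour on the single section $h_0$.
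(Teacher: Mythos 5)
Your proposal is correct and takes essentially the same route the paper indicates for this proposition (the paper just refers to the $k=N$ argument from \cite{DaPaSys} and says "with some obvious change"): reduce to the scalar foliated-Schwarz criterion on sections, then use the full coupling together with the strong comparison principle (Theorem~\ref{X-Strong Comparison Principle}) to make all components and all sections share the same axis. Your two additional elaborations — propagating equality of a single component on a single measure-zero slice up to $U\equiv U^{\sigma(e)}$ on all of $\Omega(e)$ via the componentwise and then the fully-coupled alternative of the SCP, and fixing a single $h_0$ with a nonradial component to extract the axis — are exactly the "obvious" adaptations required when $k<N$.
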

 \par
 \smallskip
The proof is similar to the one given, for the case $k=N$, in \cite{DaPaSys}, with some obvious change.
      \par
 \bigskip
 Let us consider a pair of orthogonal directions $\eta _1, \eta _2 \in S^{k-1}$,  the polar coordinates $(\gr , \gth )$ in the plane spanned by them and the corresponding cylindrical coordinates $(\gr, \gth, \tilde {y})$, with 
 $ \tilde {y} \in \R^{N-2}$.
  Then we 
  define for $U \in C^2(\ov {\Omega}; \R^m)$ the angular derivative 
 \be U_{\gth} = U_{\gth (\eta_1, \eta_2)}
 \ee  
  (trivially extended if $\gr =0 $)   which solves the linearized system
 \be \label{SistemaLinearizzato} 
 \begin{cases}
 - \gD U _{\gth} - J_F (|x|, U) U_{\gth} &=0 \quad \text{ in } \gO  \\
 U _{\gth} =0  \quad \text{ on }  \partial \gO
 \end{cases}
\ee 
and, if $ e \in \text{span }(\eta_1, \eta_2)$ and $U \equiv U^{\gs (e)}$ in $\Omega (e)$, also the system
 \be \label{SistemaLinearizzatoCappa} 
 \begin{cases}
 - \gD U _{\gth} - J_F (|x|, U) U_{\gth} &=0 \quad \text{ in } \gO (e)  \\
 U _{\gth} =0  \quad \text{ on }  \partial \gO (e)
 \end{cases}
\ee 
 \par
Using the properties of the principal eigenvalue and of the corresponding  eigenfunction we deduce, as in the case $k=N$, (see   \cite{DaPaSys}, \cite{DaPaBook},  \cite{DaGlPa1}),  the following sufficient conditions for the
$k$-sectionally foliated Schwarz symmetry.
\par
\smallskip

\begin{theorem}[Sufficient conditions for sectional FSS-Sistems] \label{SuffCondSectFSS2Sistemi} 
Let $\Omega $ be a   $k$-rotationally symmetric domain   in $\R^N$,   $2 \leq k \leq N$, and   $U \in C^2(\ov {\Omega}; \R^m)$ a solution  of \eqref{modprob}, where
$ F \in C^1([0, R ] \times \R ^m ; \R^m )$. Then $U$ is $k$-sectionally foliated Schwarz symmetric provided one of the following conditions holds:
\begin{itemize}
\item[i) ] there exists a direction $e \in S^{k-1}$ such that $U \equiv U^{\gs (e)}$ in $\Omega (e)$  and the principal eigenvalue $\tilde {\gl } _1 (\gO (e)) $ of the linearized operator $L_U  = - \gD  - J_F (x, U)  $ in $\gO (e)$ is nonnegative.
\item[ii) ] there exists a direction $e \in S^{k-1}$ such that either $U < U^{\gs (e)} $ or $U > U^{\gs (e)} $ in $\Omega (e)$
\end{itemize}
\end{theorem}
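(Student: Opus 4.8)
The plan is to reduce both cases to the sufficient condition for $k$-sectional foliated Schwarz symmetry stated in Proposition~\ref{SuffCondSectFSS1Sistemi}, namely that for every $e \in S^{k-1}$ either $U \ge U^{\gs(e)}$ or $U \le U^{\gs(e)}$ in $\Omega(e)$; once we know this alternative holds for each $e$, the conclusion follows. So fix an arbitrary $e \in S^{k-1}$, set $\gO(e)$, $\gs_e$, $U^{\gs(e)}$ as above, and observe that $U^{\gs(e)}$ is again a solution of \eqref{modprob} in $\gO(e)$ because $F$ depends on $x$ only through $|x'|$ and $x''$, which are invariant under the reflection $\gs_e$ (here one uses that $e\in S^{k-1}$, so $\gs_e$ acts only on the $x'$-variables and preserves $|x'|$).

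In case ii) the alternative \eqref{AlternDisugSistemi} holds at $e$ by hypothesis, so there is nothing to prove for that particular $e$. But more is true: I would first invoke the Rotating Planes method (Theorem~\ref{Rotating Planes Systems}) starting from the direction $e$ (writing $e = e_{\gth_0}$ after a rotation of coordinates in $\mathrm{span}(e_1,\dots,e_k)$) — since $U < U^{\gs(e_{\gth_0})}$ in $\gO(e_{\gth_0})$, we obtain a direction $e_{\gth_1}$ with $\gth_1 > \gth_0$ such that $U \equiv U^{\gs(e_{\gth_1})}$ in $\gO(e_{\gth_1})$, and $U < U^{\gs(e_{\gth})}$ strictly for all intermediate $\gth$. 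For any other direction $e' \in S^{k-1}$, rotating the plane from $e'$ towards $e_{\gth_1}$ and using the strong comparison principle (Theorem~\ref{X-Strong Comparison Principle}) together with a continuity argument, one shows the alternative \eqref{AlternDisugSistemi} holds at $e'$ as well; hence $U$ is $k$-sectionally foliated Schwarz symmetric by Proposition~\ref{SuffCondSectFSS1Sistemi}. (The case $U > U^{\gs(e)}$ is handled symmetrically by replacing $e$ with $-e$.)

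The heart of the theorem is case i). Here we are given a direction $e$ with $U \equiv U^{\gs(e)}$ in $\gO(e)$ and $\tilde{\gl}_1(\gO(e)) \ge 0$ for the linearized operator $L_U = -\gD - J_F(x,U)$. I would pick orthogonal directions $\eta_1, \eta_2 \in S^{k-1}$ with $e \in \mathrm{span}(\eta_1,\eta_2)$, form the cylindrical coordinates $(\gr,\gth,\tilde y)$ and the angular derivative $U_\gth = U_{\gth(\eta_1,\eta_2)}$, which by \eqref{SistemaLinearizzatoCappa} solves $-\gD U_\gth - J_F(|x|,U)U_\gth = 0$ in $\gO(e)$ with zero boundary data. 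Now the system \eqref{modprob} is fully coupled along $U$, so $L_U$ admits a simple positive principal eigenfunction $\gPs_1 > 0$ in $\gO(e)$ associated with $\tilde{\gl}_1(\gO(e))$ (Theorem~\ref{principaleigenvalue} i)). Since $\tilde{\gl}_1(\gO(e)) \ge 0$, the weak maximum principle considerations combined with item iii) of Theorem~\ref{principaleigenvalue} applied to the (possibly sign-changing) function $U_\gth$ and to $-U_\gth$: writing $W = U-U^{\gs(e)} \equiv 0$ one notes $U_\gth$ is, up to sign, proportional to $W/\,\mathrm{dist}$ near $H(e)$ and more precisely $U_\gth$ vanishes on $\partial\gO(e)$ and satisfies the linearized system — if $U_\gth \not\equiv 0$ then either it is of one sign in $\gO(e)$ (forcing $\tilde\gl_1(\gO(e)) \le 0$, hence $= 0$ and $U_\gth = c\gPs_1$ for some $c\neq 0$, so $U_\gth$ has a strict sign), or it changes sign, which by the characterization of the principal eigenvalue is impossible when $\tilde\gl_1(\gO(e)) \ge 0$. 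In either subcase $U_\gth$ has a constant strict sign in $\gO(e)$, or $U_\gth \equiv 0$. The sign of $U_\gth$ in $\gO(e)$ then propagates (via the rotating-plane / strong comparison machinery, exactly as in the case $k = N$ in \cite{DaPaSys, DaGlPa1}) to give the alternative \eqref{AlternDisugSistemi} for every $e' \in S^{k-1}$, so Proposition~\ref{SuffCondSectFSS1Sistemi} applies and $U$ is $k$-sectionally foliated Schwarz symmetric.

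The main obstacle is the rigorous passage from the sign of the angular derivative $U_\gth$ on a single half-domain $\gO(e)$ to the dichotomy \eqref{AlternDisugSistemi} on all half-domains $\gO(e')$, $e' \in S^{k-1}$ — this is where the fully-coupledness, the strong comparison principle, and a careful continuity-in-$e'$ argument must be combined, and it is the step where the systems case genuinely differs from the scalar case because one cannot argue component by component. I expect to handle it by the same rotating-plane continuation as in Theorem~\ref{Rotating Planes Systems}, invoking Theorem~\ref{X-Strong Comparison Principle} to upgrade non-strict to strict inequalities, and citing \cite{DaPaSys, DaGlPa1, DaPaBook} for the details that carry over verbatim from $k=N$ to general $k$, the only new point being that all reflections and rotations are confined to the first $k$ coordinates and leave $(|x'|,x'')$ — hence $F$ and the equation — invariant.
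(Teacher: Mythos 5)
Your overall plan matches the paper's: reduce both cases to Proposition~\ref{SuffCondSectFSS1Sistemi}, handle i) by studying the angular derivative $U_\gth$ (which solves the linearized system in $\gO(e)$ with zero boundary data) together with the maximum principle and the simplicity of the principal eigenvalue, and handle ii) via the rotating-plane continuation of Theorem~\ref{Rotating Planes Systems}. Your case i) argument is essentially the paper's: if $\tilde\gl_1(\gO(e))>0$ the weak maximum principle (Theorem~\ref{principaleigenvalue} ii)) forces $U_\gth\equiv 0$; if $\tilde\gl_1(\gO(e))=0$ and $U_\gth\not\equiv 0$, the simplicity of the principal eigenvalue makes $U_\gth$ a signed multiple of $\gPs_1$; then the scalar-case propagation (which you correctly identify as the step needing the strong comparison principle and full coupling, and which carries over from $k=N$ because everything is local in $x'$) yields the alternative~\eqref{AlternDisugSistemi} for all $e'$.

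The one place your proposal diverges and has a small gap is case ii), after you have rotated to a symmetric direction $e_{\gth_1}$. You write ``rotating the plane from $e'$ towards $e_{\gth_1}$'' for an arbitrary $e'$, but Theorem~\ref{Rotating Planes Systems} needs a starting direction at which you already know the strict one-sided inequality, which is precisely what you are trying to establish for $e'$; as phrased this is circular. The cleaner and standard route — and what the paper's citation of the scalar-case lemmas is implicitly invoking — is to reduce ii) to i): for $\gth\in(\gth_0,\gth_1)$ the difference $W^{e_\gth}$ is a negative solution of the fully coupled linear system~\eqref{EquazDifferenzaSistemi}, so $\tilde\gl_1(B^{e_\gth},\gO(e_\gth))\ge 0$ by Theorem~\ref{principaleigenvalue} iii); letting $\gth\to\gth_1^-$ the coefficients $b^{e_\gth}_{ij}$ converge to $-\frac{\de f_i}{\de s_j}(|x|,U)$, and by continuity of the principal eigenvalue one gets $\tilde\gl_1(L_U,\gO(e_{\gth_1}))\ge 0$, which is exactly hypothesis i) at $e_{\gth_1}$. (Alternatively one can argue directly that the rotating-plane limit forces $U_\gth\ge 0$ in $\gO(e_{\gth_1})$ and then apply the strong maximum principle to get $U_\gth\equiv 0$ or $U_\gth>0$, but one still must make this limiting step explicit.) With this reduction spelled out, your argument closes and coincides with the paper's.
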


\begin{remark} \label{InterplaySymmPrinc}
Let us observe that in Theorem \ref{SuffCondSectFSS2Sistemi}  it is the nonnegativity of the \emph{principal} eigenvalue the crucial hypothesis, while the information we  get in the  sequel will concern  the \emph{symmetric} eigenvalues of the linearized system. Therefore in the proofs that follow there will be an  interplay and a comparison between the principal eigenvalue and the first symmetric eigenvalue in the cap $\Omega (e)$.
\end{remark}

 If  $U$ is a solution of \eqref{modprob}, $e \in S^{k-1}$ and the system is fully coupled along $U$ in $\gO $,  then the difference $W=W^{e}=U-U^{\gs (e)} = (w_1, \dots , w_m)$ satisfies a linear system in $\Omega $, which is fully coupled in $\Omega $ and $\Omega (e)$:
 \par

  \begin{lemma}\label{EqDifferSistemi}  \ The following assertions hold.
  \begin{itemize}
\item[i)  ]   Assume that $U \in C^1 (\ov{ \Omega } ; \R^m )$ is a  solution of \eqref{modprob} and that the system is fully coupled along $U$ in $\gO $.
  Let us define  for any direction $e\in S^{k-1}$ the matrix $\; B^{e}(x) =\left (b_{ij}^{e}(x) \right )_{i,j=1}^m $, where
\be \label{CoeffEquazDifferenzaSistemi}
 b_{ij}^{e}(x) = - \int _0^1 \frac {\de f _i} {\de s_j} \left ( \,|x|, tU(x)+ (1-t)U^{\gs (e)}(x)  \,  \right ) \, dt  
\ee 
   Then  for any   $e\in S^{k-1}$   the function   $W^{e}=U-U^{\gs (e)} $  satisfies  in  $\gO (e)$   the linear system 
\be \label{EquazDifferenzaSistemi}
\begin{cases}  - \gD W^{e} + B^{e}(x) W^{e} & =0 \quad \text{ in }  \gO (e) \\
W^{e}& =0 \quad \text{ on }  \de \gO (e) 
\end{cases}
\ee 
 which  is fully coupled in   $\gO (e)$.
 \par
 \smallskip
 \item[ii) ]  If $\Psi = (\psi _1, \dots , \psi _m) \in \textbf{H}_0^1 (\Omega (e))$   let $Q^e (\Psi ; \Omega (e) )$ denote the quadratic form associated to the system \eqref{EquazDifferenzaSistemi} in
  $\Omega (e)$,   i.e.
 \be \label{QuadraticFormDifferenzaSistemiCappe}
 \begin{split}
 Q^e (\Psi ; \Omega (e) )&= \int _{\Omega (e)}  \left ( \, |\nabla \Psi |^2 + B^e (\Psi, \Psi ) \, \right ) \, dx \\
 &= \int _{\Omega (e)}  \left ( \, \sum _{i=1}^m |\nabla \psi _i|^2 + \sum _{i,j=1}^m b_{ij}^e \, \psi _i \, \psi _j  \, \right ) \, dx 
 \end{split}
 \ee
Then 
\be \label{UgPartiPosNeg-Q-e}  Q^{e} (\,  W^{e}; \gO (e)  \,) = \int _{\gO (e)} \left [ \,  |\nabla (W^{e}) |^2 + B^{e}(\,  W^{e} ,W^{e}  \,) \, \right ] \, dx = 0   
\ee
 while for the positive and negative parts of $W^{e}$ the following holds:
\be \label{DisugPartiPosNeg-Q-e}  Q^{e} (\,  (W^{e})^{\pm}; \gO (e)  \,) = \int _{\gO (e)} \left [ \,  |\nabla (W^{e})^{\pm} |^2 + B^{e}(\,  (W^{e})^{\pm} ,(W^{e})^{\pm}  \,) \, \right ] \, dx \leq 0   
\ee
 \end{itemize}
\end{lemma}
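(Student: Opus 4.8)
The plan is to derive both parts from the single fact that $U$ and its reflection $U^{\gs(e)}$ solve the same equation, together with the fundamental theorem of calculus and the cooperative/fully coupled structure of Definition~\ref{X-semilinearicoupled}.

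For part i), I would first note that for $e\in S^{k-1}$ (so $e=(e',0)$ with $|e'|=1$) the hyperplane $H(e)$ contains the axis $\{x'=0\}$, the reflection $\gs_e$ maps $\Omega$ onto itself and preserves the quantities $|x'|$ and $x''$ through which $F$ depends on $x$; hence $U^{\gs(e)}$ is again a solution of \eqref{modprob}, in particular in $\Omega(e)$. By the mean value theorem one has, componentwise, $f_i(x,U(x))-f_i(x,U^{\gs(e)}(x))=-\sum_{j=1}^m b^e_{ij}(x)\,w^e_j(x)$ with $b^e_{ij}$ as in \eqref{CoeffEquazDifferenzaSistemi} and $W^e=(w^e_1,\dots,w^e_m)$, so subtracting the equations for $U$ and $U^{\gs(e)}$ gives $-\gD W^e+B^e(x)W^e=0$ in $\Omega(e)$; and $W^e=0$ on $\partial\Omega(e)$ because $W^e\equiv0$ on $H(e)\cap\Omega$, while on $\partial\Omega\cap\ov{\Omega(e)}$ both $U$ and $U^{\gs(e)}$ vanish (using $\gs_e(\partial\Omega)=\partial\Omega$). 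The $b^e_{ij}$ lie in $L^{\infty}(\Omega(e))$ since $U$ is bounded and $F\in C^1$, and $b^e_{ij}\le0$ for $i\neq j$ since $\tfrac{\de f_i}{\de s_j}\ge0$ everywhere by \eqref{X-EqSemilCoupled}; so \eqref{EquazDifferenzaSistemi} is cooperative in $\Omega(e)$.

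The remaining point of part i) — full coupling of \eqref{EquazDifferenzaSistemi} in $\Omega(e)$ — is where some care is needed, since the hypothesis on \eqref{modprob} is full coupling along $U$ in $\Omega$, not in $\Omega(e)$. Here I would use that each coefficient $b^e_{ij}$ is invariant under $\gs_e$: from $U(\gs_e x)=U^{\gs(e)}(x)$, $U^{\gs(e)}(\gs_e x)=U(x)$ and the $\gs_e$-invariance of $|x'|,x''$, the substitution $t\mapsto1-t$ in \eqref{CoeffEquazDifferenzaSistemi} yields $b^e_{ij}(\gs_e x)=b^e_{ij}(x)$. Hence each set $\{x\in\Omega:b^e_{i_0j_0}(x)<0\}$ is $H(e)$-symmetric. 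Moreover, by continuity and the nonnegativity of the integrand in \eqref{CoeffEquazDifferenzaSistemi}, $b^e_{i_0j_0}(x)<0$ whenever $\tfrac{\de f_{i_0}}{\de s_{j_0}}(x,U(x))>0$; so, for the indices $i_0,j_0$ given by the full coupling of \eqref{modprob} along $U$, the set $\{x\in\Omega:b^e_{i_0j_0}(x)<0\}$ contains $\{x\in\Omega:\tfrac{\de f_{i_0}}{\de s_{j_0}}(x,U(x))>0\}$, which has positive measure by \eqref{X-EqSemilFullyCoupled}; by the $H(e)$-symmetry this set then has positive measure in $\Omega(e)$ as well, which is \eqref{fullycoupled} for $B^e$ in $\Omega(e)$.

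For part ii), since $U\in C^2(\ov{\Omega};\R^m)$ the function $W^e$ is $C^2$ up to the boundary and vanishes on $\partial\Omega(e)$, so $W^e$ and $(W^e)^{\pm}$ belong to $\textbf{H}_0^1(\Omega(e))$. Testing the $i$-th equation of \eqref{EquazDifferenzaSistemi} with $w^e_i$ and summing over $i$ gives $Q^e(W^e;\Omega(e))=0$, i.e. \eqref{UgPartiPosNeg-Q-e}. For \eqref{DisugPartiPosNeg-Q-e} I would test the $i$-th equation with $(w^e_i)^+$, use $\nabla w^e_i\cdot\nabla(w^e_i)^+=|\nabla(w^e_i)^+|^2$ and split $w^e_j=(w^e_j)^+-(w^e_j)^-$; the terms $-b^e_{ij}(w^e_j)^-(w^e_i)^+$ vanish for $j=i$ and are $\ge0$ for $j\neq i$ (since $b^e_{ij}\le0$), so dropping them and summing over $i$ leaves $Q^e((W^e)^+;\Omega(e))\le0$; the case $(W^e)^-$ is identical, testing with $-(w^e_i)^-$ and using $\nabla w^e_i\cdot\nabla(w^e_i)^-=-|\nabla(w^e_i)^-|^2$ and $b^e_{ij}(w^e_j)^+(w^e_i)^-\le0$ for $j\neq i$. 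These computations are routine given the cooperative structure; the one genuinely delicate step, as indicated, is the transfer of full coupling from $\Omega$ to $\Omega(e)$, which rests on the $\gs_e$-invariance of the $b^e_{ij}$ — itself a consequence of $F$ depending on $x$ only through $|x'|$ and $x''$ and of $\Omega$ being $k$-rotationally symmetric.
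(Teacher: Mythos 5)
Your proof is correct and follows the same route as the paper's: the linear system and its coefficients come from the fundamental theorem of calculus applied to $F(|x'|,x'',\cdot)$ between $U$ and $U^{\gs(e)}$, cooperativity passes to $B^e$ directly from \eqref{X-EqSemilCoupled}, full coupling transfers from $\Omega$ to $\Omega(e)$ via the $\gs_e$-invariance of the $b^e_{ij}$, and both parts of ii) come from testing the $i$-th equation with $w_i$ (resp.\ $(w_i)^{\pm}$), splitting $w_j=w_j^+-w_j^-$, and discarding the sign-favorable off-diagonal terms. The only difference is that you spell out the $\gs_e$-invariance of $b^e_{ij}$ (via the change of variable $t\mapsto 1-t$) and the positivity-of-measure transfer to $\Omega(e)$ in more detail than the paper, which simply asserts them.
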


\begin{proof} 
From the equation $- \gD U = F (|x|,U(x)) $ we deduce that the reflected function $U^{\gs (e)} $ satisfies the equation
$- \gD U^{\gs (e)}  = F (|x|,U^{\gs (e)}(x) ) $ and hence  the difference $W^{e}=U-U^{\gs (e)}= (w_1, \dots , w_m) $  satisfies  
 $$- \gD W^{e}  = F(|x|, U)-F(|x|, U^{\gs (e)})
 $$
 Let us set $V= U^{\gs (e)}$.
For any $i=1, \dots ,m $ we have that  
 \be \nonumber
  \begin{split}
 & f_i(|x|,U(x))- f_i(|x|,V(x)) =   \\
 \sum _{j=1}^m \int _0^1 & \frac {\de f _i} {\de s_j}   \left ( |x|, tU(x)+ (1-t)V(x)   \right ) \, (u_j(x)-v_j(x)) \, dt     
 \end{split}
\ee 
 
 As a consequence
     $W^{e}    $  satisfies    \eqref{EquazDifferenzaSistemi}. Moreover if $i \neq j $ then  $b_{ij}^{e}(x) \leq 0$ by  \eqref{X-EqSemilCoupled} ,  so that the linear system \eqref{EquazDifferenzaSistemi}  is weakly coupled. \par
If $U \in C^1 (\ov{ \Omega } ; \R^m )$ is a solution of   \eqref{modprob} and the system is fully coupled along $U$
then the linear  system   associated to the matrix $ B^{e}$  is fully coupled   in $\Omega $.
Indeed if $i_0 \neq j_0 $ and $\frac {\de f _{i_0}} {\de s _{j_0}} (x, U(x)) > 0$ then, since $ \frac {\de f _i(y)} {\de s_j} \geq 0$ for every $y \in \Omega $, we get that \par
$b_{ij}(x) = - \int _0^1 \frac {\de f _i} {\de s_j} \left [|x|, tU(x)+ (1-t)V(x)  \right ] \, dt    <0 $. \\
Since $B^{e}$ is symmetric with respect to the reflection $\gs _e $, 
 \eqref{EquazDifferenzaSistemi}  is fully coupled in $\gO (e)$ as well and i) is proved.\par
 To get \eqref{UgPartiPosNeg-Q-e} it is enough  to multiply the $i$-th equation of the system for $w_i$ and integrate. 
 Instead, multiplying the $i$-th equation of  \eqref{EquazDifferenzaSistemi} for $w_i^+$, we get  
   $$
   0= \int _{\gO (e)} ( |\nabla w_i^+ |^2 + \sum _{j=1}^m b_{ij}^{e} w_j w_i^+ )\, dx  \geq 
\int _{\gO (e)} (|\nabla w_i ^+ |^2 + \sum _{j=1}^m b_{ij}^{e} w_j^+ w_i^+ )\, dx 
$$
 since $w_i \, w_i ^+ = |w_i ^+|^2$, while $w_j w_i^+ \leq w_j^+ w_i^+$ and $b_{ij} \leq 0 $ if $i \neq j $. \\
Summing on $i$ we get  
$$
0   \geq 
\int _{\gO (e)} \sum _{i=1}^m |\nabla w_i^+ |^2 + \sum _{i,j=1}^m b_{ij}^{e} w_j^+ w_i^+ \, dx 
$$  
 i.e. \eqref{DisugPartiPosNeg-Q-e} in the case of the positive part.     \par           
For the negative part we proceed analogously
multiplying the $i$-th equation of  \eqref{EquazDifferenzaSistemi} for $w_i^-$ and integrating.  We get \par
\smallskip
  $0=  - \int _{\gO (e)} |\nabla w_i^- |^2 + \sum _{j=1}^m b_{ij}^{e} w_j w_i^- \, dx  \leq 
-\int _{\gO (e)} |\nabla w_i ^-|^2 + \sum _{j=1}^m b_{ij}^{e} (-w_j^-) w_i^- \, dx $ \par
\smallskip
$= -\int _{\gO (e)} |\nabla w_i ^-|^2 - \sum _{j=1}^m b_{ij}^{e} (w_j^-) w_i^- \, dx $ \par
\smallskip 
\noi since $w_i \, w_i ^- = - |w_i^-|^2$, while $w_j w_i^- \geq -(w_j^-) w_i^-$ and $b_{ij} \leq 0 $ if $i \neq j $. \\
Summing on $i$ we obtain
$$
0   \geq 
\int _{\gO (e)} \sum _{i=1}^m |\nabla w_i ^-|^2 + \sum _{i,j=1}^m b_{ij}^{e} w_j^- w_i^- \, dx 
$$
  i.e. \eqref{DisugPartiPosNeg-Q-e} in the case of the negative part.
\par

\end{proof}

\begin{remark} \  
 Note that  the inequalities in  \eqref{DisugPartiPosNeg-Q-e} could be strict. Indeed the products  $w_i^+ \, w_j ^-$ could  be not identically zero  if $i \neq j$, and therefore 
 $Q(W^e)$ does not coincide in general with $Q((W^e)^+) + Q ((W^e)^-)$, as it happens in the scalar case.
 \end{remark}
 
\subsection{Nonlinearities having convex components}  \  \\
We will prove Theorem \ref{fconvessaSistemi} by  several auxiliary  results.
\par
\smallskip


 \begin{lemma}\label{lemma4}  Assume that $U$ is a solution of \eqref{modprob}  and that the hypotheses i)--ii) 
 of Theorem \ref{fconvessaSistemi} hold. 
Then   for any direction $e \in S^{k-1}$
$$Q_U\left ( (W^{e})^{+}; \gO (e)  \right )  \leq 0 
$$
 where $Q_U$ is the quadratic form defined in \eqref{Xformaquadraticalinearizzato} and $W^e $ is as in Lemma \ref{EqDifferSistemi}.

 \end{lemma}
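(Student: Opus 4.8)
The plan is to combine the convexity of each $f_i$ in the $S$-variable with the identities from Lemma \ref{EqDifferSistemi}. Fix a direction $e \in S^{k-1}$ and write, as in Lemma \ref{EqDifferSistemi}, $W^e = U - U^{\gs(e)} = (w_1, \dots, w_m)$ and $V = U^{\gs(e)}$, so that on $\gO(e)$ the difference solves \eqref{EquazDifferenzaSistemi} with matrix $B^e = (b^e_{ij})$, where $b^e_{ij}(x) = -\int_0^1 \frac{\de f_i}{\de s_j}(|x|, tU(x)+(1-t)V(x))\,dt$. The key point is to compare $B^e(x)$ with the Jacobian matrix $J_F(|x|,U(x))$ evaluated at the solution itself, which is exactly the matrix appearing in the quadratic form $Q_U$ of \eqref{Xformaquadraticalinearizzato}.

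First I would record, for each $i$, that convexity of $s \mapsto f_i(|x|,S)$ means its Hessian in $S$ is positive semidefinite; equivalently, the gradient map $S \mapsto \nabla_S f_i(|x|,S)$ is monotone, so for any two points $S_1, S_2 \in \R^m$,
\be \label{monotonia}
\sum_{j=1}^m \left( \frac{\de f_i}{\de s_j}(|x|,S_1) - \frac{\de f_i}{\de s_j}(|x|,S_2) \right)(s_{1,j} - s_{2,j}) \ge 0 .
\ee
Apply this with $S_1 = tU(x)+(1-t)V(x)$ and $S_2 = U(x)$: since $S_1 - S_2 = (t-1)(U(x)-V(x)) = -(1-t)W^e(x)$, integrating in $t$ over $[0,1]$ gives, for each $i$,
\be \label{confrontohessiani}
\sum_{j=1}^m \left( -b^e_{ij}(x) - \frac{\de f_i}{\de s_j}(|x|,U(x)) \right) w_j(x) \le 0 .
\ee
Now I would multiply \eqref{confrontohessiani} by $w_i^+(x)$. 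On $\{w_i > 0\}$ we have $w_i = w_i^+$; and the point is to control the cross terms. Here I use cooperativeness from hypothesis i): since $f_i$ is convex and the Jacobian off-diagonal entries $\frac{\de f_i}{\de s_j}$ ($i\ne j$) are $\ge 0$ (equivalently $b^e_{ij}\le 0$), the terms $b^e_{ij} w_j w_i^+$ and $-\frac{\de f_i}{\de s_j} w_j w_i^+$ can be replaced by their $w_j^+$-versions with the inequality going the right way, exactly as in the proof of \eqref{DisugPartiPosNeg-Q-e}. Summing over $i$ and combining with the inequality \eqref{DisugPartiPosNeg-Q-e} already established in Lemma \ref{EqDifferSistemi}, namely $Q^e((W^e)^+;\gO(e)) \le 0$, I obtain
$$
Q_U\left((W^e)^+; \gO(e)\right) = \int_{\gO(e)} \left( |\nabla (W^e)^+|^2 - \sum_{i,j=1}^m \frac{\de f_i}{\de s_j}(|x|,U(x))\, w_i^+ w_j^+ \right) dx \le Q^e\left((W^e)^+;\gO(e)\right) \le 0,
$$
which is the claim.

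The main obstacle I anticipate is making the cross-term bookkeeping fully rigorous: one must be careful that on the set $\{w_i \le 0\}$ the term $w_i^+$ vanishes so \eqref{confrontohessiani} is used only where $w_i^+ = w_i > 0$, and that the replacements of $w_j$ by $w_j^+$ use both the sign of the off-diagonal coefficients (cooperativeness, from convexity plus i)) and the pointwise inequality $-b^e_{ij}(x) \ge \frac{\de f_i}{\de s_j}(|x|,U(x))$ off-diagonal coming from \eqref{monotonia} applied componentwise — actually the cleanest route is to apply the monotonicity inequality \eqref{monotonia} not to the full sum but to track each bilinear contribution, then sum. A secondary point to check is regularity: $W^e \in \textbf{H}^1(\gO(e)) \cap C^0$, so $(W^e)^+ \in \textbf{H}_0^1(\gO(e))$ and is an admissible competitor for $Q_U$, which is needed for the displayed inequality to make sense.
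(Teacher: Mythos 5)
Your plan---pass through the already-established inequality $Q^e((W^e)^+;\gO(e))\le 0$ of Lemma \ref{EqDifferSistemi} and then prove $Q_U((W^e)^+;\gO(e))\le Q^e((W^e)^+;\gO(e))$---has a genuine gap at the second step. Writing $a_{ij}=-b^e_{ij}-\frac{\de f_i}{\de s_j}(|x|,U(x))$, the comparison you want is
$$ Q_U\bigl((W^e)^+;\gO(e)\bigr)-Q^e\bigl((W^e)^+;\gO(e)\bigr)=\int_{\gO(e)}\sum_{i,j}a_{ij}\,w_i^+w_j^+\,dx\le 0 . $$
Your inequality \eqref{confrontohessiani}, obtained from monotonicity of $\nabla_S f_i$, only says $\sum_j a_{ij}w_j\le 0$ pointwise for each $i$, and multiplying by $w_i^+$ and summing gives $\sum_{i,j}a_{ij}w_j\,w_i^+\le 0$, with $w_j$ rather than $w_j^+$. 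To pass from $w_j$ to $w_j^+$ for $j\ne i$ one needs $a_{ij}\le 0$ off the diagonal, i.e.\ $-b^e_{ij}\le \frac{\de f_i}{\de s_j}(|x|,U)$. Nothing in the hypotheses gives this: convexity of $f_i$ controls the quadratic form of its Hessian, not individual off-diagonal entries, and in fact $-b^e_{ij}$ is an average of $\frac{\de f_i}{\de s_j}$ along the segment from $U^{\gs(e)}$ to $U$, which could be either larger or smaller than the endpoint value. The pointwise inequality you invoke as ``\eqref{monotonia} applied componentwise'' is not a consequence of monotonicity of the gradient; and note that the inequality you actually wrote down, $-b^e_{ij}\ge\frac{\de f_i}{\de s_j}(|x|,U)$, is the \emph{wrong direction} for the needed sign of $a_{ij}$ anyway.

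The paper sidesteps $B^e$ altogether. It tests $-\gD w_i=f_i(|x|,U)-f_i(|x|,U^{\gs(e)})$ directly with $w_i^+$ and uses the one-sided (tangent-plane) form of convexity, $f_i(|x|,U)-f_i(|x|,U^{\gs(e)})\le \nabla_S f_i(|x|,U)\cdot W^e$, so the coefficient multiplying $w_j w_i^+$ is $\frac{\de f_i}{\de s_j}(|x|,U)$ itself, which \emph{is} nonnegative off the diagonal by cooperativeness. That nonnegativity is exactly what lets one replace $w_j$ by $w_j^+$ for $j\ne i$ and close the argument. In short: you need the sign of a single Jacobian matrix, not of the difference between $B^e$ and the Jacobian; use the convexity inequality at the point $U$ directly rather than relaying through Lemma \ref{EqDifferSistemi}.
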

 
 \begin{proof}
 For any $i=1, \dots , m $ we have  
  $$
  - \gD w_i= f_i(|x|,U)-f_i(|x|,U^{\gs (e)})  \quad \text{ in } \gO (e)
  $$
 \smallskip
 \noi Testing the equation with $w_i^+$ we obtain 
 \be \label{ProvvEqDiff}
 \int _{\gO (e)}    | \nabla (w_i )^{+}|^2 \, dx  = \int _{\gO (e)} \left ( \, f_i(|x|,U)-f_i(|x|,U^{\gs (e)})  \, \right ) \,  w_i^+   \, dx 
 \ee
  Observe that     $f_i(|x|,S)$ is convex in $S$, so that \par
  \smallskip
$ ( \, f_i(|x|,U(x))-f_i(|x|,U^{\gs (e)}(x)) \, )  w_i^+  \leq ( \, \nabla f_i(|x|,U(x)) \cdot (U(x)- U^{\gs (e)}(x)) \, )  w_i^+ $ \par
\smallskip
$= ( \, \nabla f_i(|x|,U(x)) \cdot W^{e} \, )  w_i^+ 
= \sum _{j=1}^m  \frac {\de  f_i}{\de u_j}(|x|,U(x)) \,w_j \,    w_i^+  $ \par
\smallskip
\noi where 
$\nabla $ stands for the gradient of $f_i$ with respect to the variables $S=(s_1, \dots , s_m)$. Moreover   
$$ \frac {\de  f_i}{\de s_i} w_i \, w_i^+ = \frac {\de  f_i}{\de s_i} |w_i^+|^2 \; , \;  \text{ while } \; 
   \frac {\de  f_i}{\de s_j} w_j \, w_i^+  \leq \frac {\de  f_i}{\de s_j} w_j^+ \, w_i^+ \;  \text{ if } 
i \neq j
$$
  because $\frac {\de  f_i}{\de s_j} \geq 0 $ by the weak coupling assumption.\\
By \eqref{ProvvEqDiff}, taking into account the previous inequalities, we get
$$
 \int _{\gO (e)}    | \nabla (w_i )^{+}|^2 \, dx  \leq  \int _{\gO (e)} \sum _{j=1}^m  \frac {\de  f_i}{\de s_j}(|x|,U(x)) \,w_j ^+ \,    w_i^+     \, dx 
$$
  Thus, summing on $i= 1, \dots , m$, we  obtain

\be 
\int _{\gO (e)}    \left ( \, \sum _{i=1}^m  | \nabla (w_i )^{+}|^2 - \sum _{i,j=1}^m  \frac {\de  f_i}{\de s_j}(|x|,U(x)) \,   w_i^+ \, w_j^+ \, \right ) \, dx \leq 0
\ee
i.e.  $Q_U\left ( (W^{e})^{+}; \gO (e)  \right )  \leq 0 $.
  \end{proof}
  \par
  \medskip
  
  If $e \in S^{k-1}$ is a direction  orthogonal to $\bold e_{k+1}, \dots , \bold e_N$ and $C=\frac 12 \left (  J_F(x)+ J_F(x)^t \right )$, let us denote 
     the eigenvalues and the eigenfunctions of the operator $- \Delta - C $ in the cap $\Omega (e)$   by 
     \be \label{DefinAutovCappeLinear}  \gl _k^e= \gl _k (- \Delta - C \; ; \; \Omega (e)) \quad ; \quad  
     \gF _k^e  = \gF _k (- \Delta - C \; ; \; \Omega (e))
     \ee 

\begin{lemma}\label{lemma5}  Suppose that $U$ is a solution of \eqref{modprob} with Morse index $m (U) \leq k$ and assume that the hypothesis i) of Theorem \ref{fconvessaSistemi} holds.
Then there exists a direction $e \in S^{k-1}$ such that  \ \ $ \gl _1^e     \geq 0 $, 
 hence also the corresponding principal eigenvalue $\tilde {\gl }_1 (L_U ,\gO (e) )$ is  nonnegative, by Theorem \ref{principaleigenvalue}, so that 
$$
 Q_U (\Psi ; \gO (e) )  \geq 0
  \notag
$$
for any $\Psi \in C_c^1 (\gO (e);\R^m)$ .\\
\end{lemma}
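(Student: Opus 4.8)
The plan is to exploit the bound on the Morse index together with the variational characterization of the symmetric eigenvalues of the linearized operator. By Proposition 2.1 (\ref{X-CarattIndiceMorse}) the Morse index $m(U)$ equals the number of negative symmetric eigenvalues $\gl_j^{\text{s}}$ of $L_U = -\gD - J_F(x,U)$ in $\gO$; hence $m(U) \leq k$ means $\gl_{k+1}^{\text{s}}(\gO) \geq 0$. First I would pick $k$ mutually orthogonal directions $e_1, \dots, e_k \in S^{k-1}$ (the standard basis of the $k$-plane), together with the reflections $\gs_{e_1}, \dots, \gs_{e_k}$ and the corresponding half-domains $\gO(e_1), \dots, \gO(e_k)$. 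The key geometric fact is that the hyperplanes $H(e_1), \dots, H(e_k)$ can be arranged so that the caps $\gO(e_i)$ are "almost disjoint" in the sense needed for a dimension count: more precisely, one shows that a function supported in one cap, reflected appropriately, can be combined with functions supported in the other caps to build a $(k+1)$-dimensional test space on which $Q_U(\cdot;\gO)$ is negative definite, unless $\gl_1^{e_i} \geq 0$ for at least one $i$.

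The heart of the argument is the following contradiction scheme, standard since \cite{PaWe} and adapted to systems in \cite{DaPaSys}. Suppose $\gl_1^{e_i} < 0$ for every $i = 1, \dots, k$, with first eigenfunctions $\gF_1^{e_i} \in \Huno$ of $-\gD - C$ on $\gO(e_i)$, extended by zero to all of $\gO$. Using the rotational symmetry of $\gO$ and of $F$ in the $x'$-variables, the angular derivative $U_{\gth}$ associated to any pair of the chosen directions solves the linearized system \eqref{SistemaLinearizzato} in $\gO$, so $U_\gth$ gives one more "direction" orthogonal (in $\Ldue$) to the $\gF_1^{e_i}$'s, because $U_\gth$ is odd and each $\gF_1^{e_i}$ is (can be taken) even, or has the right parity, with respect to the reflections involved. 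Combining the $k$ eigenfunctions $\gF_1^{e_i}$ with $U_\gth$ — or, in the degenerate case $U_\gth \equiv 0$, directly concluding $U$ depends only on $|x'|,x''$ — produces a $(k+1)$-dimensional subspace of $\Huno$ on which $Q_U$ is negative definite, using that $Q_U$ restricted to functions supported in distinct caps decouples and using $Q_U(U_\gth;\gO) = 0$ together with a perturbation argument. This contradicts $\gl_{k+1}^{\text{s}}(\gO) \geq 0$. Hence some $\gl_1^e \geq 0$ with $e \in S^{k-1}$.

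Once $\gl_1^e \geq 0$ is secured for some direction $e$ with $U \equiv U^{\gs(e)}$ in $\gO(e)$ (the case $U \not\equiv U^{\gs(e)}$ being handled separately, via the rotating planes method, so that here we may assume $U \equiv U^{\gs(e)}$), the remaining assertions are immediate: since the system is fully coupled along $U$ in $\gO$, hence in $\gO(e)$ by Lemma 3.1 (\ref{EqDifferSistemi}), Theorem 2.8 (\ref{principaleigenvalue})(iv) gives $\tilde{\gl}_1(L_U,\gO(e)) \geq \gl_1^{\text{(s)}}(\gO(e)) = \gl_1^e \geq 0$, and then the variational characterization together with the definition of the principal eigenvalue yields $Q_U(\Psi;\gO(e)) \geq 0$ for all $\Psi \in C_c^1(\gO(e);\R^m)$. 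The main obstacle is the dimension-counting step: one must carefully check the parities of the eigenfunctions $\gF_1^{e_i}$ under the various reflections and verify that the $k+1$ candidate functions are genuinely linearly independent in $\Ldue$ and that $Q_U$ is negative definite on their span — this requires either choosing the directions $e_i$ so that the caps overlap in a controlled way, or, as in \cite{DaPaSys}, using an approximation/limiting argument to reduce to directions in general position. A secondary technical point is the regularity needed to apply Proposition 2.1 and the eigenvalue theory on the (Lipschitz but possibly non-smooth) caps $\gO(e)$, which is handled by the density statement in Theorem 2.3 (\ref{Xvarformautov})(v).
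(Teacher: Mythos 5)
Your proposal does not follow the paper's argument, and as stated it has a genuine gap at its core.

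You propose fixing $k$ mutually orthogonal directions $e_1,\dots ,e_k$, taking the first eigenfunctions $\gF_1^{e_i}$ of $-\gD-C$ on the half-domains $\gO(e_i)$ extended by zero, adjoining the angular derivative $U_\gth$, and producing a $(k+1)$-dimensional subspace on which $Q_U$ is negative definite. This fails for several concrete reasons. First, the caps $\gO(e_i)$ for orthogonal directions are \emph{not} disjoint: $\gO(e_i)\cap\gO(e_j)$ is a quarter-space of positive measure, so the zero-extensions of $\gF_1^{e_i}$ and $\gF_1^{e_j}$ have overlapping supports, $Q_U$ does not decouple, and there is no reason for the restriction of $Q_U$ to their span to be negative definite. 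Second, $Q_U(U_\gth;\gO)=0$, not $<0$, so $U_\gth$ cannot contribute to a subspace on which $Q_U$ is negative \emph{definite}; the ``perturbation argument'' you invoke is not formulated, and it would in any case have to fight against the overlap problem simultaneously. Third, the parity claim that $\gF_1^{e_i}$ is even with respect to the reflections ``involved'' is not meaningful for a function extended by zero; it has no parity. You acknowledge the obstacle at the end, but acknowledging it is not the same as circumventing it, and the route you sketch (caps ``overlapping in a controlled way'' or limiting to ``directions in general position'') is not how the paper, or \cite{DaPaSys}, handles this.

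The paper's actual argument avoids all of this by never trying to build a $(k+1)$-dimensional negative definite subspace. After disposing of the case $m(U)\leq 1$ directly (there, for any $e$, negativity of both $\gl_1^e$ and $\gl_1^{-e}$ would already give a $2$-dimensional negative subspace, since $\gO(e)$ and $\gO(-e)$ \emph{are} disjoint), it handles $2\leq j=m(U)\leq k$ via a Borsuk--Ulam argument. For each $e\in S^{k-1}$ one constructs a single test function
$$
\gPs^e=
\left(\frac{(\gF_1^{-e},\Phi_1)}{(\gF_1^{e},\Phi_1)}\right)^{1/2}\gF_1^e\,\chi_{\gO(e)}
-\left(\frac{(\gF_1^{e},\Phi_1)}{(\gF_1^{-e},\Phi_1)}\right)^{1/2}\gF_1^{-e}\,\chi_{\gO(-e)},
$$
which uses only the two genuinely disjoint caps $\gO(e)$ and $\gO(-e)$ and whose weights are chosen so that $(\gPs^e,\Phi_1)_{\Ldue}=0$. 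The map $e\mapsto\big((\gPs^e,\Phi_2),\dots,(\gPs^e,\Phi_j)\big)$ is odd and continuous from $S^{k-1}$ to $\R^{j-1}$, and since $j-1\leq k-1$ the Borsuk--Ulam theorem gives a direction $e$ at which it vanishes. Then $\gPs^e\bot\Phi_1,\dots,\Phi_j$, so $Q_U(\gPs^e;\gO)\geq 0$ by the min-max characterization (Theorem \ref{Xvarformautov} ii); since $Q_U(\gPs^e;\gO)$ is a positive combination of $\gl_1^e$ and $\gl_1^{-e}$, one of the two is nonnegative. That is the whole argument.

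A smaller point: in your final paragraph you assume $U\equiv U^{\gs(e)}$ in $\gO(e)$ in order to conclude. That assumption plays no role in Lemma \ref{lemma5} and is not available here (it enters later, in the proof of Theorem \ref{fconvessaSistemi}). Once $\gl_1^e\geq 0$ is secured, the nonnegativity of $Q_U(\Psi;\gO(e))$ for all $\Psi\in C_c^1(\gO(e);\R^m)$ is an immediate consequence of the variational characterization of $\gl_1^e$, and $\tilde{\gl}_1(L_U,\gO(e))\geq\gl_1^e\geq 0$ by Theorem \ref{principaleigenvalue} iv); no symmetry of $U$ is needed.
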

\begin{proof}   
The assertion is immediate if the Morse index of the solution satisfies $m (u) \leq 1$. 
 Indeed in this case for any direction $ e$ at least one among
$ \gl _1^{e}$  and 
$ \gl _1^{-e}$ must be nonnegative.
 Indeed if this would not be the case then the quadratic form 
 $Q_U (\Psi )= \int _{ \Omega } ( |\nabla \Psi |^2 - C(x) \, (\Psi , \Psi) ) \, dx $ would be negative definite on the $2$- dimensional space spanned by the trivial extensions of the eigenfunctions
 $\gF _1^e $ and $\gF _1^{-e} $ and hence $m(u) \geq 2$. 
\par
So let us assume that   $2\leq j= m (u) \leq k$.\par
Denote by   $\Phi_k$ the $L^2 (\Omega)$ normalized eigenfunctions of the operator $L_U=- \Delta - C $ in $\Omega $, with  $\Phi _1 $ positive in $\Omega$,  and 
for any direction $ e \in S^{k-1}$
let us consider the function 
$$ \gPs ^{ e}(x)=
 \begin{cases} 
 \left (  \frac  {  ( \gF _1^{-e} \,, \, \Phi _1 )_{L^2(\Omega )} }  {  ( \gF _1^{e} \,, \, \Phi _1 )_{L^2(\Omega )} }  \right ) ^{\frac 12} \gF _1^{e} (x)
  \quad &\text{ if } x \in \gO (e)    \\
  -  \left (  \frac  {  ( \gF _1^{e} \,, \, \Phi _1 )_{L^2(\Omega )} }  {  ( \gF _1^{-e} \,, \, \Phi _1 )_{L^2(\Omega )} }  \right ) ^{\frac 12} \gF _1^{-e} (x)
  \quad &\text{ if } x \in \gO (-e)    
\end{cases}
$$
where $ \gF _1^{e} $ is the first positive $L^2$-normalyzed eigenfunction in $\Omega (e)$, as in \eqref{DefinAutovCappeLinear}.
  \par
 The mapping $e \mapsto \gPs _e $ is odd and  continuous    from $S^{k-1}$ to $H_0^1 (\Omega ) $  and, by construction,  
 \be (\gPs ^{ e} \,, \, \Phi _1 )_{ L^2(\Omega )} =0
 \ee
The function $h: S^{k-1}\to \R ^{j-1} $ defined by
 \be h( e)= \left ( ( \gPs ^{ e} \,, \, \Phi _2 )_{L^2(\Omega )},   \dots , ( \gPs ^{ e} \,, \, \Phi _j )_{L^2(\Omega )} \right )
 \ee
 is also odd and  continuous.  Since $(j-1 ) < k $,  by the Borsuk-Ulam Theorem it must  have a zero. 
 This means that there exists a direction $e\in S^{k-1}$ such that
   $\gPs ^{ e} $ is orthogonal to all the eigenfunctions $\Phi _1, \dots , \Phi_j$. Since $ m (u)=j $, by Theorem \ref{Xvarformautov} \  ii)   we deduce  that 
 $Q_U(\gPs ^{ e} ; \gO ) \geq 0 $, which in turn implies that either $Q_u(\gF _1 ^{ e} ; \gO (e) ) \geq 0 $ or 
 $Q_U(\gF _1^{ -e } ; \gO (-e) ) \geq 0 $, i.e. either
 $\gl _1^{ e} $ or $\gl _1^{- e} $ is nonnegative, so the assertion is proved.
\end{proof}
 \par 
 \smallskip 
\begin{proof}[Proof of Theorem \ref{fconvessaSistemi}]
By Lemma \ref{lemma5} there exists a direction $e \in S^{k-1}$ such that the first symmetric eigenvalue $\gl _1^{\text{s}} (L_U, \gO (e))$ of the linearized operator 
  is nonnegative, so that the principal eigenvalue $\tilde {\gl}_1 (\gO (e))$ is nonnegative as well.
 Moreover by Lemma \ref{lemma4} we have that $Q_U\left ( (W^{e} )^{+}  \right ) \leq 0 $, so that either $ (W^{e} )^{+} \equiv 0 $, or $\gl _1^{\text{s}} (L_U, \gO (e))=0 $ and  $ ( W^{e} )^{+}$ is the positive  first symmetric eigenfunction in $\gO (e)$. 
  In any case  either $U\leq U^{\gs (e)}$ or 
 $U\geq U^{\gs (e)}$ in $\gO (e)$ holds.\\
Thus, by the strong maximum principle, either  $U \equiv  U^{\gs (e)} $ in $\gO (e)$, and the principal eigenvalue $\tilde {\gl}_1 (\gO (e))$  is nonnegative,  
or $U <  U^{\gs (e)} $ in $\gO (e)$ or $U >  U^{\gs (e)} $ in $\gO (e)$. 
Hence, by Theorem \ref{SuffCondSectFSS2Sistemi} 
  $U$ is foliated Schwarz symmetric. \par
   \end{proof}
\par
\medskip
\begin{remark} \label{AutovPrincZero} \ 
  In the previous proof when $ (U-U^{\gs (e)})^{+} \equiv 0 $ we also have 
  by construction that $\gl _1^{\text{s}} (L_u, \gO (e)) \geq 0 $ and therefore the principal eigenvalue satisfies \  $  \tilde {\gl} _1 (\gO (e)) = \tilde {\gl} _1 (\gO (-e))  \geq 0 $. \par    
   In the case when $U <  U^{\gs (e)}$ in $\gO (e)$ or $U >  U^{\gs (e)}$ in $\gO (e)$, by rotating the planes we find a different direction $e'$ such that 
   $U \equiv U^{\gs (e')} $ in $\Omega (e')$ and it could happen that
 $ \gl _1^{\text{s}} (\gO (e')) <0 $. 
 However let us observe
    explicitly  that  the sign of the principal eigenvalue is preserved in the rotation, i.e.
  $ \tilde {\gl} _1 (\gO (e')) = \tilde {\gl} _1 (\gO (-e'))  \geq 0
  $, and actually  $ \tilde {\gl} _1 (\gO (e')) = \tilde {\gl} _1 (\gO (-e'))  = 0 $. \par
    Indeed since $U < U^{\gs (g)}$ for any direction $g$  between  $e$ and $e'$, we have that $0$ is the principal eigenvalue of the system satisfied by $U- U^{\gs (g)}$, namely \eqref{EquazDifferenzaSistemi}, with coefficients 
   $$
 b_{ij}^{g}(x) = - \,  \int _0^1 \frac {\de f _i} {\de s_j} \left [|x|, tU(x)+ (1-t)U^{\gs (g)}(x)  \right ] \, dt  
$$ 
As $g \to e' $, where $e'$ is the symmetry position,  the coefficients $b_{ij}$ approach the coefficients of the linearized system, namely
$c_{ij}= - \,  \frac {\partial f_i} {\partial s_j}$, so by continuity $ \tilde {\gl} _1 (\gO (e')) = \tilde {\gl} _1 (\gO (-e'))  = 0 $.\par
       \end{remark}
\par
\smallskip

\subsection{Nonlinearities with  convex derivatives}  \ \\      
The proof of Theorem \ref{f'convessaSistemi} follows the scheme of the proof of Theorem  \ref{fconvessaSistemi},  and it is based upon the following results.
 \par
\smallskip
 
    \begin{lemma}\label{Xlemma1} 
   Assume that $U$ is a solution of \eqref{modprob} and the hypotheses  of Theorem \ref{f'convessaSistemi} hold.
   Let $\; B^{e}(x) =\left (b_{ij}^{e}(x) \right )_{i,j=1}^m $ be the matrix associated to the fully coupled system \eqref{EquazDifferenzaSistemi} defined by \eqref{CoeffEquazDifferenzaSistemi}, i.e. 
   \be\nonumber  
 b_{ij}^{e}(x) = - \int _0^1 \frac {\de f _i} {\de s_j} \left [|x|, tU(x)+ (1-t)U^{\gs (e)}(x)  \right ] \, dt  
\ee 
and let us define the matrix 
  $\quad  B^{e,s}(x) =\left (b_{ij}^{e,s}(x) \right )_{i,j=1}^m  \quad $, where
\be \label{DefCoeffSimm} b_{ij}^{e,s} (x) =  - \frac 12 \left(  \frac {\de f _i} {\de s_j} (|x|,U(x)) +  \frac {\de f _i} {\de s_j} (|x|,U^{\gs  (e)}(x)) \right ) 
\ee
Then the linear system with matrix $B^{e,s} $
is fully coupled in $\gO $ and $\gO (e)$ for any  $e\in S^{N-1}$. Moreover 
  for any  $i,j=1,\dots ,m$  and $ x \in \gO $  it holds 
\be \label{CfrCoefficienti} b_{ij}^{e} (x)\;  \geq \; b_{ij}^{e,s} (x)  
\ee
 \noi  Finally  for the  quadratic forms $ Q^{e} $ and $ Q^{e,s} $  associated to the matrices $ B^{e} $ and $ B^{e,s} $ we have that
\begin{multline} \label {cfrformequadratiche}
 0 \geq  Q^{e} (\,  (W^{e})^{\pm}; \gO (e)  \,) = \int _{\gO (e)} \left [ \,  |\nabla (W^{e})^{\pm} |^2 + B^{e}(\,  (W^{e})^{\pm} ,(W^{e})^{\pm}  \,) \, \right ] dx  \\
\geq  \int _{\gO (e)} \left [ \, |\nabla (W^{e})^{\pm} |^2 + B^{e,s}(\,(W^{e})^{\pm} ,(W^{e})^{\pm} \,  ) \, \right ] dx  = Q^{e,s}( \, (W^{e})^{\pm}; \gO (e) \,) 
\end{multline}
for $W^{e}= U - U^{\gs (e)} $, 
\end{lemma}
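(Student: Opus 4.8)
The plan is to derive all three assertions from convexity of the entries $\frac{\de f_i}{\de s_j}$ in the variable $S$, feeding the outcome into what has already been proved for the (nonsymmetric) difference system \eqref{EquazDifferenzaSistemi} in Lemma \ref{EqDifferSistemi}.

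First I would establish the pointwise comparison \eqref{CfrCoefficienti}. Fixing indices $i,j$ and a point $x\in\gO$, and writing $a=U(x)$, $b=U^{\gs (e)}(x)$, convexity of the scalar function $S\mapsto\frac{\de f_i}{\de s_j}(|x|,S)$ yields, for every $t\in[0,1]$,
\[
\frac{\de f_i}{\de s_j}(|x|,ta+(1-t)b)\ \le\ t\,\frac{\de f_i}{\de s_j}(|x|,a)+(1-t)\,\frac{\de f_i}{\de s_j}(|x|,b).
\]
Integrating this over $t\in[0,1]$ and using $\int_0^1 t\,dt=\int_0^1(1-t)\,dt=\tfrac12$ turns the definitions \eqref{CoeffEquazDifferenzaSistemi} and \eqref{DefCoeffSimm} into $-b_{ij}^{e}(x)\le-b_{ij}^{e,s}(x)$, i.e. \eqref{CfrCoefficienti}.

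Next I would check that the matrix $B^{e,s}$ is fully coupled in $\gO$ and in $\gO (e)$. Cooperativity is immediate: for $i\ne j$ the derivative $\frac{\de f_i}{\de s_j}$ is nonnegative everywhere by \eqref{X-EqSemilCoupled}, hence $b_{ij}^{e,s}\le0$. For full coupling in $\gO$, given a partition $I,J$ of $\{1,\dots,m\}$ I would take the indices $i_0\in I$, $j_0\in J$ supplied by the hypothesis that \eqref{modprob} is fully coupled along $U$; on the positive–measure set where $\frac{\de f_{i_0}}{\de s_{j_0}}(|x|,U(x))>0$ the remaining term in $b_{i_0j_0}^{e,s}$ is $\ge0$ (since $i_0\ne j_0$), so $b_{i_0j_0}^{e,s}(x)<0$ there. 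To pass to the cap $\gO (e)$ I would note, exactly as for $B^{e}$ in Lemma \ref{EqDifferSistemi}, that $B^{e,s}$ is invariant under the reflection $\gs_e$ (because $U\circ\gs_e=U^{\gs (e)}$ while $|x|$ and $x''$ are left unchanged), so the set $\{\,b_{i_0j_0}^{e,s}<0\,\}$ is $\gs_e$–symmetric and therefore meets $\gO (e)$ in positive measure.

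Finally I would assemble the chain \eqref{cfrformequadratiche}. The outer bound $0\ge Q^{e}((W^{e})^{\pm};\gO (e))$ together with the first equality is precisely \eqref{DisugPartiPosNeg-Q-e} of Lemma \ref{EqDifferSistemi} combined with the definition \eqref{QuadraticFormDifferenzaSistemiCappe}. For the middle inequality I would subtract the two forms: with $W^{e}=(w_1,\dots,w_m)$,
\[
Q^{e}\big((W^{e})^{\pm};\gO (e)\big)-Q^{e,s}\big((W^{e})^{\pm};\gO (e)\big)=\int_{\gO (e)}\sum_{i,j=1}^m\big(b_{ij}^{e}(x)-b_{ij}^{e,s}(x)\big)\,w_i^{\pm}\,w_j^{\pm}\,dx,
\]
which is $\ge0$ because each coefficient $b_{ij}^{e}-b_{ij}^{e,s}$ is nonnegative by the first step and each product $w_i^{\pm}w_j^{\pm}$ is nonnegative (the positive and the negative parts being nonnegative functions); the last equality is again just the definition of $Q^{e,s}$. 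There is no serious obstacle in this argument: the computations are elementary throughout, and the only point calling for a moment's care is the transfer of full coupling of $B^{e,s}$ from $\gO$ to the half-domain $\gO (e)$, which rests on the $\gs_e$–invariance of $B^{e,s}$, the same device already used for $B^{e}$ in Lemma \ref{EqDifferSistemi}.
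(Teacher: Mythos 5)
Your proof is correct and takes essentially the same approach as the paper: convexity gives the pointwise comparison \eqref{CfrCoefficienti}, and the chain \eqref{cfrformequadratiche} follows by combining \eqref{DisugPartiPosNeg-Q-e} with that comparison and the nonnegativity of the products $w_i^{\pm}w_j^{\pm}$. The only small variation is the full-coupling step for $B^{e,s}$, which you re-derive from the hypothesis on $F$ together with $\gs_e$-invariance, while the paper observes more economically that it is inherited directly from the already-established full coupling of $B^{e}$ via the inequality $b_{i_0j_0}^{e,s}\le b_{i_0j_0}^{e}$; both arguments are valid.
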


\begin{proof}  
By hypothesis ii)  of Theorem \ref{f'convessaSistemi}  we get
\begin{multline} - b_{ij}^{e}(x) = \int _0^1   \frac {\de f _i} {\de s_j} \left [|x|, tU(x)+ (1-t)U^{\gs (e)}(x)  \right ] dt \\
\leq 
\int _0^1 \left (   t\, \frac {\de f _i} {\de s_j} \left [ |x|, U(x) \right ] + (1-t) \frac {\de f _i} {\de s_j} \left [ |x|,U^{\gs  (e)}(x)  \right ]  \right )dt \\
=
 \frac 12 \left(  \frac {\de f _i} {\de s_j} (|x|,U(x)) +  \frac {\de f _i} {\de s_j} (|x|,U^{\gs  (e)}(x)) \right )= - b_{ij}^{e,s}(x) 
 \end{multline}
 This implies \eqref{CfrCoefficienti}  and hence the
full coupling of the system with matrix $B^{e,s} $, since, by Lemma \ref{EqDifferSistemi},
the system with matrix $B^{e} $ is fully coupled.  
From \eqref{DisugPartiPosNeg-Q-e}  and \eqref{CfrCoefficienti}, since $w_k^{\pm} \geq 0 $, we get 
\be \nonumber
\begin{split}
0   \geq 
& \int _{\gO (e)} \left ( \sum _{i=1}^m |\nabla w_i^+ |^2 + \sum _{i,j=1}^m b_{ij}^{e} w_j^+ w_i^+  \right )\, dx \geq  \\
& \int _{\gO (e)} \left ( \sum _{i=1}^m |\nabla w_i^+ |^2 + \sum _{i,j=1}^m b_{ij}^{e,s} w_j^+ w_i^+  \right ) \, dx 
\end{split}
\ee
 i.e. \eqref{cfrformequadratiche} in the case of the positive part of $W^{e} $.
Analogously we get the corresponding inequality for the negative part of $W^{e} $.
    \end{proof} 
  \par
\smallskip 

\begin{lemma}\label{Xlemma4}  Suppose that $U$ is a solution of \eqref{modprob} with Morse index $m (U) \leq k-1$ and assume that the hypothesis i) of Theorem \ref{f'convessaSistemi} holds. \\
Let  $Q^{e,s}$ be 
the quadratic form associated to the operator $L^{e,s} (V) = - \gD V + B^{e,s}V $, \ $B^{e,s}$ being the matrix defined in \eqref{DefCoeffSimm} :
\be \label{formaquadraticasimmetricalinearizzato}
\begin{split}
&Q^{e,s} (\Psi ; \gO ' ) = \int _{\gO '} \left [ |\nabla \Psi |^2 + B^{e,s}(\Psi ,\Psi )\right ] dx= \\
& \int _{\gO}\left [   \sum _{i=1}^m |\nabla \psi _i |^2 -\sum _{i,j=1}^m  \frac 12 \left ( \frac {\de f_i}{\de s_j}(|x|, U(x))+  \frac {\de f_i}{\de s_j}(|x|, U^{\gs  (e)}(x)) \right ) \psi _i \psi _j  \right ]  \, dx 
\end{split}
\ee
Then there exists a direction $e \in S^{k-1}$ such that 
$$
 Q^{e,s} (\Psi ; \gO (e) )  \geq 0 \quad \quad  \forall \; \Psi  \in C_c^1 (\gO (e);\R^m)
  \notag
$$
Equivalently  
the first \emph{symmetric} eigenvalue $\gl _1^{\text{s}} (L^{e,s}, \gO (e))$ of the  operator $L^{e,s} (V) = - \gD V + B^{e,s}V $ in $\gO (e)$ is nonnegative (and hence also the principal eigenvalue $\tilde {\gl }_1 (L^{e,s} ,\gO (e) )$ is  nonnegative).
\end{lemma}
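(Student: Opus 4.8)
The plan is to mimic the proof of Lemma \ref{lemma5} (the analogous statement in the convex-components case), replacing the symmetric linearized operator $L_U=-\gD-C$ by the $e$-dependent symmetric operator $L^{e,s}=-\gD+B^{e,s}$, and to exploit the Borsuk--Ulam theorem with the sharper Morse index bound $m(U)\le k-1$, which gives a map on $S^{k-1}$ into $\R^{j}$ (with $j=m(U)\le k-1$) rather than $\R^{j-1}$. The key structural fact that makes this work, and that must be recorded first, is the following: for $x\in\gO(e)$ the quadratic form $Q^{e,s}(\cdot;\gO(e))$ dominates (pointwise in the integrand, since $b_{ij}^{e,s}(x)=c_{ij}(x)$ whenever $x$ lies on the symmetry hyperplane and more generally because of the averaging) the form built from the \emph{full} linearized operator only in a weak sense; what we really use is that $B^{e,s}(x)$ is \emph{symmetric with respect to $\gs_e$} in $\gO$ and equals $C(x)$ on $H(e)\cap\gO$. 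Hence the eigenfunctions $\gF_1^e$ of $L^{e,s}$ on $\gO(e)$, trivially extended, behave exactly as $\gF_1^e$ did in Lemma \ref{lemma5}.

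First I would treat the trivial cases $m(U)\le 0$: if $U$ is stable then $Q^{e,s}(\Psi;\gO(e))\ge Q_U(\Psi;\gO(e))\ge 0$ for every direction, using \eqref{CfrCoefficienti} together with the fact that $Q_U\ge 0$ on $C_c^1$. Next, for $1\le j=m(U)\le k-1$: let $\Phi_1>0,\dots,\Phi_j$ be the $L^2(\gO)$-normalized eigenfunctions of the full symmetric linearized operator $L_U^s=-\gD-C$ in $\gO$ corresponding to its negative (and possibly one zero) eigenvalues, and for $e\in S^{k-1}$ let $\gF_1^{e}>0$ be the first $L^2$-normalized eigenfunction of $L^{e,s}$ in $\gO(e)$. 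I would form the odd continuous map $e\mapsto\gPs^{e}\in\Huno$ exactly as in Lemma \ref{lemma5}, i.e. a suitable positive multiple of $\gF_1^{e}$ on $\gO(e)$ glued to a negative multiple of $\gF_1^{-e}$ on $\gO(-e)$, with the gluing constants chosen so that $(\gPs^{e},\Phi_1)_{L^2(\gO)}=0$. Observe that $B^{-e,s}(x)=B^{e,s}(\gs_e x)$, so $\gF_1^{-e}=\gF_1^{e}\circ\gs_e$ and the construction is consistent and $e\mapsto\gPs^e$ is odd.

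Then the map $h:S^{k-1}\to\R^{j-1}$, $h(e)=\big((\gPs^{e},\Phi_2)_{L^2(\gO)},\dots,(\gPs^{e},\Phi_j)_{L^2(\gO)}\big)$, is odd and continuous, and since $j-1\le k-2<k-1$ the Borsuk--Ulam theorem yields a direction $e\in S^{k-1}$ with $h(e)=0$; combined with $(\gPs^e,\Phi_1)_{L^2}=0$ this shows $\gPs^e\perp\mathrm{span}\{\Phi_1,\dots,\Phi_j\}$ in $\Ldue$. By Theorem \ref{Xvarformautov} ii) (applied to $L_U^s$, whose $j$-th eigenvalue is the last negative/zero one), $Q_U(\gPs^e;\gO)\ge 0$. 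Now $Q_U(\gPs^e;\gO)=Q_U(\gPs^e;\gO(e))+Q_U(\gPs^e;\gO(-e))$, and by the $\gs_e$-symmetry of the construction and of the potential on $H(e)$ the two summands are equal; hence $Q_U(\gPs^e;\gO(e))\ge 0$. Finally, since $\gPs^e$ restricted to $\gO(e)$ is a positive multiple of $\gF_1^{e}$, and since $Q^{e,s}(\Psi;\gO(e))\ge Q_U(\Psi;\gO(e))$ for all $\Psi\in C_c^1(\gO(e);\R^m)$ by \eqref{CfrCoefficienti} (because $b_{ij}^{e,s}(x)\ge c_{ij}(x)$ is \emph{not} generally true, so here I would instead argue directly: $R^{e,s}(\gF_1^e)=\gl_1^{\mathrm s}(L^{e,s},\gO(e))\le R^{e,s}(\gF_1^e)$ tautologically, and $Q^{e,s}(\gF_1^e;\gO(e))=\gl_1^{\mathrm s}(L^{e,s},\gO(e))\|\gF_1^e\|_{L^2}^2$), we must compare $Q^{e,s}$ and $Q_U$ on $\gO(e)$. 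The main obstacle is precisely this comparison: one needs $Q^{e,s}(\Psi;\gO(e))\ge Q_U(\Psi;\gO(e))$, equivalently $\sum_{ij}b_{ij}^{e,s}\psi_i\psi_j\le -\sum_{ij}\frac{\de f_i}{\de s_j}(|x|,U)\psi_i\psi_j$. This is \emph{not} pointwise true, so instead I would use that on $\gO(e)$ one has $U(x)$ and $U^{\gs(e)}(x)$ averaged, and that the relevant inequality only needs to hold for $\Psi=\gF_1^e$; alternatively, and more robustly, I would directly conclude from $Q_U(\gF_1^e;\gO(e))\ge 0$ and the identity $b_{ij}^{e,s}(x)$ being the average $\frac12(-\frac{\de f_i}{\de s_j}(|x|,U(x))-\frac{\de f_i}{\de s_j}(|x|,U^{\gs(e)}(x)))$ that $Q^{e,s}(\gF_1^e;\gO(e))\le \frac12 Q_U(\gF_1^e;\gO(e))+\frac12 Q_U^{\gs(e)}(\gF_1^e;\gO(e))$, and the second term equals $Q_U(\gF_1^e\circ\gs_e;\gO(-e))=Q_U(\gPs^e;\gO(-e))\ge 0$ up to the normalizing constant, whence $\gl_1^{\mathrm s}(L^{e,s},\gO(e))\ge 0$. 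Once this sign is established, Theorem \ref{principaleigenvalue} gives $\tilde\gl_1(L^{e,s},\gO(e))\ge\gl_1^{\mathrm s}(L^{e,s},\gO(e))\ge 0$, completing the proof. I expect the bookkeeping of the averaging inequality and the reflection identities to be the only delicate point; everything else is a direct transcription of Lemma \ref{lemma5}.
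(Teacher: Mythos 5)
Your proposal starts in the right direction (a Borsuk--Ulam argument on $S^{k-1}$ comparing an $e$-dependent eigenfunction against the negative eigenspace of the linearized operator), but you import the wrong construction from Lemma~\ref{lemma5} and this breaks the central identity. In Lemma~\ref{lemma5} the operator on the caps is the \emph{fixed} operator $-\gD - C$, whose eigenfunctions on $\gO(e)$ and $\gO(-e)$ are unrelated; that is why one must glue $\gF_1^e$ and $\gF_1^{-e}$ with \emph{different} normalizing constants (to force $(\gPs^e,\Phi_1)_{L^2}=0$) and why $h$ maps into $\R^{j-1}$. Here, by contrast, the operator $L^{e,s}=-\gD+B^{e,s}$ is $\gs_e$-\emph{equivariant}: $B^{e,s}(\gs_e x)=B^{e,s}(x)$, so the first eigenfunction $\gf^{e,s}$ on $\gO(e)$ simply reflects to that on $\gO(-e)$. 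The paper therefore takes $\Phi^{e,s}$ to be the plain \emph{odd extension} of $\gf^{e,s}$, with no normalizing constants at all, and instead puts $\Phi_1$ back into the test map, $h:S^{k-1}\to\R^j$, which is admissible precisely because $j=m(U)\le k-1$. Once $\Phi^{e,s}$ is genuinely $\gs_e$-odd, the integrand of $Q_U(\Phi^{e,s};\gO)-Q^{e,s}(\Phi^{e,s};\gO)$ is the product of a $\gs_e$-even factor $\Phi^{e,s}_i\Phi^{e,s}_j$ and a $\gs_e$-odd factor $\tfrac12\bigl(\tfrac{\de f_i}{\de s_j}(|x|,U^{\gs(e)})-\tfrac{\de f_i}{\de s_j}(|x|,U)\bigr)$, hence integrates to zero, giving the clean chain
$0\le Q_U(\Phi^{e,s};\gO)=Q^{e,s}(\Phi^{e,s};\gO)=2\,Q^{e,s}(\gf^{e,s};\gO(e))=2\,\gl_1^{\text{s}}(L^{e,s},\gO(e))$.

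The specific step of yours that fails is the claim that $Q_U(\gPs^e;\gO(e))=Q_U(\gPs^e;\gO(-e))$ ``by the $\gs_e$-symmetry of the construction and of the potential.'' Neither ingredient is $\gs_e$-symmetric: the potential in $Q_U$ is $J_F(|x|,U(x))$, which is not $\gs_e$-even unless $U$ is already reflectionally symmetric (the thing to be proved), and your glued $\gPs^e$ is not $\gs_e$-odd when the two normalizing constants differ (generically they do). What your Borsuk--Ulam step actually yields is $a_e^2\,Q_U(\gF_1^e;\gO(e))+b_e^2\,Q_{U^{\gs(e)}}(\gF_1^e;\gO(e))\ge 0$ for uncontrolled positive constants $a_e,b_e$, from which the unweighted average $Q^{e,s}(\gF_1^e;\gO(e))=\tfrac12\bigl(Q_U(\gF_1^e;\gO(e))+Q_{U^{\gs(e)}}(\gF_1^e;\gO(e))\bigr)$ cannot be bounded below. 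Note also that this last relation is an equality, not ``$\le$'' as you wrote. The gluing route you attempt would, if it worked, prove the stronger bound $m(U)\le k$ for systems, which the paper explicitly flags as open; the hypothesis $m(U)\le k-1$ exists exactly so that one can afford to put $\Phi_1$ into $h$ and use the odd extension, sidestepping the weighted-average obstruction you ran into.
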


\begin{proof} 
Let us assume that  $1\leq j= m (U) \leq k-1 $ and let $\Phi _1, \dots , \Phi _j$ be mutually orthogonal eigenfunctions corresponding to the negative symmetric eigenvalues $\gl _1^{\text{s}} (L_U, \gO )$, \dots , $\gl _j^{\text{s}} (L_U, \gO )$ of the linearized operator $L_U (V) = - \gD V - J_F (x, U) V $ in $\gO $ .\\
 For any $e\in S^{k-1}$ let $\phi ^{e,s}$ be the first positive $L^2$-normalized eigenfunction  of the symmetric system associated to the linear operator 
 $L^{e,s}  $ in $\gO (e)$. We observe that $\phi ^{e,s}$ is uniquely determined since the corresponding system is fully coupled in $\gO (e)$.
 Let $\Phi ^{e,s}$ be the odd extension of $\Phi ^{e,s}$ to $\gO$, and let us observe that  $\Phi ^{-e,s} = - \Phi ^{e,s}$, because $B^{e,s}$ is symmetric with respect to the reflection $\gs _e$. \\
 The mapping $e \mapsto \Phi ^{e,s} $ is a continuous odd function from $S^{k-1}$ to $\huno $,    
  therefore the mapping $h: S^{k-1}\to \R ^{j} $ defined by
 $$ h(e)= \left ( (\Phi ^{e,s} \, , \, \Phi _1 )_{\ldue (\gO )},   \dots , ( \Phi ^{e,s} \,, \, \Phi _j )_{\ldue (\gO )} \right )
 $$
 is an odd continuous mapping, and since $j \leq  k-1 $,  by the Borsuk-Ulam Theorem it must  have a zero.
 This means that there exists a direction $e\in S^{k-1}$ such that
   $\Phi ^{e,s}  $ is orthogonal to all the eigenfunctions $\Phi _1, \dots , \Phi _j$.   This implies that 
 $Q_U(\Phi ^{e,s}; \gO )  \geq 0 $, because $ m (U)=j $, and since $\Phi ^{e,s}$ is an odd function, we obtain that 
 $0 \leq Q_U(\Phi ^{e,s} ; \gO )  =  Q^{e,s} (\Phi ^{e,s}, \gO )= 2 Q^{e,s} (\phi ^{e,s}, \gO (e)) = 2  \gl _1^{\text{s}} (L^{e,s}, \gO (e)) $
 \end{proof}
 \par
\medskip

\begin{proof}[Proof of Theorem \ref {f'convessaSistemi}] 
By Lemma \ref{Xlemma4} there exists a direction $e$ such that 
the first symmetric eigenvalue $\gl _1^{\text{s}} (L^{e,s}, \gO (e))$ of the  operator $L^{e,s} (V) = - \gD V + B^{e,s}V $ in $\gO (e)$ is nonnegative, and hence also the principal eigenvalue $\tilde {\gl }_1 (L^{e,s} ,\gO (e) )$ is  nonnegative.\par 
Since $Q^{e,s}( \, (W^{e})^{\pm}; \gO (e) \,)  \leq 0 $ by Lemma \ref{Xlemma1}, we have two alternatives.
The first one is that  $(W^{e})^{+}$ and $(W^{e})^{-}$ both vanish, in which case $W^{e} \equiv 0 $ in $\Omega (e)$, and this implies in turn that $L^{e,s}= L_U$. Then 
$U $ is symmetric  and  the principal eigenvalue $\tilde {\gl }_1 (L_{U} ,\gO (e) )$ is  nonnegative, so that 
the hypothesis i) of Theorem  \ref{SuffCondSectFSS2Sistemi}  holds and we get that  $U$ is  
 foliated Schwarz symmetric.
 The second alternative  is that one among  $(W^{e})^{+}$ and $(W^{e})^{-}$ does not vanish and  $\gl _1^{\text{s}} (L^{e,s}, \gO (e))=0$. Then either  
 $(W^{e})^{+}$ or $(W^{e})^{-}$ is a first symmetric eigenfunction of the  operator $L^{e,s} (V)  $ in $\gO (e)$. If  
 $(W^{e})^{+}$ is a  first symmetric eigenfunction of the  operator $L^{e,s} (V) = - \gD V + B^{e,s}V $ in $\gO (e)$ then it is positive  in $\gO (e)$, i.e.
    $U > U^{\gs _{e}} $  in $\gO (e)$. In the case when $(W^{e})^{-}$ is the  first symmetric eigenfunction we get the reversed inequality.  
Then, by the sufficient condition ii) given by Theorem \ref{SuffCondSectFSS2Sistemi},  $u$ is foliated Schwarz symmetric. \par
 \end{proof}
 \par
 \medskip
 \begin{proof}[Proof of  Corollary \ref{corollario1}]  \      
By the  proof of Theorem \ref{fconvessaSistemi} and Remark \ref{AutovPrincZero}   we can find  $e \in S^{k-1}$  such that $U$ is symmetric with respect to the hyperplane $H(e)$ and the principal eigenvalue 
$ \tilde {\gl} _1 (\gO (e)) = \tilde {\gl} _1 (\gO (-e))  \geq 0 $.
As in the proof of Lemma \ref{lemma5}  it is easy to see that
if  $U$ is a Morse index one solution then for any direction $e$ either   $\gl _1^{\text{s}} (L_U, \gO (e))$ or $\gl _1^{\text{s}} (L_U, \gO (-e))$ must be nonnegative. 
On the other hand, by  symmetry, $ \gl _1 ^{\text{s}} (\gO (e)) = \gl _1 ^{\text{s}} (\gO (-e)) $, so that
$ \tilde {\gl} _1 (\gO (e)) = \tilde {\gl} _1 (\gO (-e))  \geq  \gl _1 ^{\text{s}} (\gO (e)) = \gl _1 ^{\text{s}} (\gO (-e))   \geq 0 $.\par
Then, if $ \tilde {\gl} _1 (\gO (e)) >0 $,  the angular derivative $U_{\theta} $  must vanish (since it satisfies \eqref{SistemaLinearizzatoCappa} and  the maximum principle holds in 
$ \gO (e)$). Hence 
$U$ is radial. \\
So if  $U_{\theta} \not \equiv 0 $ necessarily   $ \tilde {\gl} _1 (\gO (e)) = \gl _1 ^{\text{s}} (\gO (e))  =0 $ and by iv) of Proposition \ref{principaleigenvalue}  the derivative
$U_{\theta}$ is a negative first eigenfunction of the simmetrized system in $\gO (e)$, as well as a solution of \eqref{SistemaLinearizzatoCappa}. 
Thus  we get that 
$$J_F (|x|, U) U_{\theta}= \frac 12 \left (J_F (|x|, U) + J_F ^t (|x|, U) \right ) U_{\theta}
$$
 i.e. 
  \eqref{superfullycoupling1} and if $m=2$ we get    \eqref{superfullycoupling2},  since $U_{\theta}$ is strictly  negative. \par
  The proof in the case when the hypotheses of Theorem \ref{f'convessaSistemi} hold is the same.  
  \end{proof}
 \par
\medskip

\end{document}